\tikzset{
  column sep/.code=\def\pgfmatrixcolumnsep{\pgf@matrix@xscale*(#1)},
  row sep/.code   =\def\pgfmatrixrowsep{\pgf@matrix@yscale*(#1)},
  matrix xscale/.code=%
    \pgfmathsetmacro\pgf@matrix@xscale{\pgf@matrix@xscale*(#1)},
  matrix yscale/.code=%
    \pgfmathsetmacro\pgf@matrix@yscale{\pgf@matrix@yscale*(#1)},
  matrix scale/.style={/tikz/matrix xscale={#1},/tikz/matrix yscale={#1}}}
\def\pgf@matrix@xscale{1}
\def\pgf@matrix@yscale{1}
   \edef\Gin@extensions{\Gin@extensions,.mps}
\numberwithin{equation}{section}
\newtheorem{theorem}[equation]{Theorem}
\newtheorem*{theorem*}{Theorem}
\newtheorem{lemma}[equation]{Lemma}
\newtheorem{corollary}[equation]{Corollary}
\newtheorem*{corollary*}{Corollary}
\newtheorem{proposition}[equation]{Proposition}
\newtheorem{thmx}{Theorem}
\newtheorem{corollaryx}{Corollary}
\theoremstyle{definition}
\newtheorem{definition}[equation]{Definition}
\newtheorem{example}[equation]{Example}
\newtheorem{notation}[equation]{Notation}
\theoremstyle{remark}
\newtheorem{remark}[equation]{Remark}
\theoremstyle{observation}
\newcommand{\hide}[1]{}
\newcommand{\defeq}{\;{\stackrel{\text{\tiny def}}{=}}\;}
\def\chk#1{#1^{\smash{\scalebox{.7}[1.4]{\rotatebox{90}{\guilsinglleft}}}}}
\newcommand{\Ob}{\mathrm{Ob}}
\newcommand{\bbA}{{\mathbb A}}
\newcommand{\bbC}{{\mathbb C}}
\newcommand{\bbE}{{\mathbb E}}
\newcommand{\bbF}{{\mathbb F}}
\newcommand{\bbG}{{\mathbb G}}
\newcommand{\bbL}{{\mathbb L}}
\newcommand{\bbN}{{\mathbb N}}
\newcommand{\bbP}{{\mathbb P}}
\newcommand{\bbQ}{{\mathbb Q}}
\newcommand{\bbR}{{\mathbb R}}
\newcommand{\bbX}{{\mathbb X}}
\newcommand{\bbZ}{{\mathbb Z}}
\newcommand{\bH}{{\bf H}}
\newcommand{\bN}{{\bf N}}
\newcommand{\bP}{{\bf P}}
\newcommand{\bQ}{{\bf Q}}
\newcommand{\bW}{{\bf W}}
\newcommand{\bX}{{\bf X}}
\newcommand{\bY}{{\bf Y}}
\newcommand{\bj}{{\bf j}}
\newcommand{\bn}{{\bf n}}
\newcommand{\bw}{{\bf w}}
\newcommand{\bx}{{\bf x}}
\newcommand{\bz}{{\bf z}}
\newcommand{\cC}{{\mathcal C}}
\newcommand{\cE}{{\mathcal E}}
\newcommand{\cF}{{\mathcal F}}
\newcommand{\cO}{{\mathcal O}}
\newcommand{\cP}{{\mathcal P}}
\newcommand{\cT}{{\mathcal T}}
\newcommand{\rC}{{\rm C}}
\newcommand{\rE}{{\rm E}}
\newcommand{\rH}{{\rm H}}
\newcommand{\rJ}{{\rm J}}
\newcommand{\Spec}{{\rm Spec}}
\DeclareMathOperator{\HH}{H}
\DeclareMathOperator{\Proj}{Proj}
\DeclareMathOperator{\Sym}{Sym}
\DeclareMathOperator{\Gal}{Gal}
\newcommand{\isom}{\cong}
\newcommand{\Char}{\mathrm{char}}
\newcommand{\Reali}{\mathcal{R}}
\newcommand{\Trunc}{\mathrm{Trunc}}
\newcommand{\Rec}{\mathrm{Rec}}
\begin{document}
\title
[Connectivity of Joins and cohomological quantifier elimination]
{
Connectivity of joins, cohomological quantifier elimination, and an algebraic Toda's theorem
}

\author{Saugata Basu}
\address{Department of Mathematics, Purdue University,
150 N. University Street, West Lafayette, IN 47907, U.S.A.}
\email{sbasu@math.purdue.edu}
\author{Deepam Patel}
\address{Department of Mathematics, Purdue University,
150 N. University Street, West Lafayette, IN 47907, U.S.A.}
\email{patel471@purdue.edu}

\thanks{
S.B. would like to acknowledge support from the National Science Foundation award CCF-1618981, DMS-1620271,  and CCF-1910441.
D.P. would like to acknowledge support from the National Science Foundation award DMS-1502296.}

\begin{abstract}
In this article, we use cohomological techniques to obtain an algebraic version of Toda's theorem in complexity theory valid over algebraically closed fields of arbitrary characteristic. This result follows from a general `connectivity' result in cohomology. More precisely, given a closed subvariety $X \subset \bbP^{n}$ over an algebraically closed field $k$, and denoting by $\rJ^{[p]}(X) = \rJ(X,\rJ(X,\cdots,\rJ(X,X)\cdots)$ the $p$-fold iterated join of $X$ with itself, we prove that the restriction homomorphism on (singular or $\ell$-adic etale) cohomology $\rH^{i}(\bbP^{N}) \rightarrow \rH^{i}(\rJ^{[p]}(X))$, with $N = (p+1)(n+1) - 1$, is an isomorphism for 
$0 \leq i < p$, and injective for $i=p$. We also prove this result in the more general setting of relative joins for $X$ over a base scheme $S$, where $S$ is of finite type over $k$. 
We give several other applications of this connectivity result including a cohomological version of classical quantifier elimination in the 
first order theory of algebraically closed fields of arbitrary characteristic, and to obtain effective bounds on the Betti numbers of images of projective varieties under projection maps.
\end{abstract}
\maketitle

\tableofcontents

\section{Introduction}
\label{sec:intro}
The main goal of this article is to obtain a geometric proof of an algebraic version of Toda's theorem in complexity theory valid over algebraically closed fields of arbitrary characteristic. 
We obtain this result as an application of some cohomological properties of (ruled) joins of projective 
schemes. These cohomological results are also applied to obtain a cohomological version of quantifier elimination as well as give bounds for the Betti numbers of projective varieties under projection maps. We describe our main results, the motivation behind these results, 
and their connections with prior work in the following paragraphs.

\subsection{Cohomological connectivity of joins}
Let $X \subset \bbP^{m}$ and $Y \subset \bbP^{n}$ denote two 
non-empty
closed sub-schemes 
over an algebraically closed field $k$.\footnote{Here $\bbP^n$ is the usual $n$-dimensional projective space over $k$. Sometimes we denote this by $\bbP^n_k$ in order to make the base field explicit.}
Then the (ruled) join $\rJ(X,Y)$ is a closed 
subscheme
of $\bbP^{n+m+1}$. Moreover, one can show that $\rJ(X,Y)$ is connected. 
One can interpret the latter topological connectivity result as the following cohomological connectivity result:
\begin{center}
The restriction map induces an isomorphism 
$\rH^{0}(\bbP^{n+m+1}) \rightarrow \rH^{0}(\rJ(X,Y)).$
\end{center}

Our first main theorem generalizes this cohomological connectivity result to iterated joins. Given $X_{i} \subset \bbP^{n_i}$ ($0 \leq i \leq p)$, let $\rJ^{[p]}(\bbX) := \rJ(X_0,\ldots,X_p) \subset \bbP^{N}$ denote the iterated (ruled) join. Here 
$
\displaystyle{
N = \sum_{i=0}^{p}(n_i + 1) -1.
}
$

\begin{theorem*}[cf. Theorem~\ref{thm:connmultijoin}]
Let  for $0 \leq i \leq p$,
$X_i \subset \bbP^{n_i}$ be  non-empty closed 
subschemes.
Then the inclusion $\rJ^{[p]}(\bbX) \hookrightarrow \bbP^{N}$ 
(with $\displaystyle{N = \sum_{i=0}^{p}(n_i + 1) -1}$) induces an isomorphism
\begin{equation}
\rH^{j}(\bbP^{N}) \rightarrow \rH^{i}(\rJ^{[p]}(X))
\end{equation}
for all $j, 0 \leq j < p$, 
and an injective homomorphism  for $j=p$.
\end{theorem*}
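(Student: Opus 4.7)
The plan is to reduce the statement to a cohomological vanishing on the affine cone, and then deduce the theorem via Gysin sequences for $\bbG_m$-bundles. Let $\widetilde{\rJ}^{[p]}(\bbX) := C(X_0) \times \cdots \times C(X_p) \subset \bbA^{N+1}$ denote the product of affine cones, so that $\widetilde{\rJ}^{[p]}(\bbX) \setminus \{0\} \to \rJ^{[p]}(\bbX)$ is a principal $\bbG_m$-bundle sitting compatibly inside the tautological bundle $\bbA^{N+1} \setminus \{0\} \to \bbP^N$. The central technical input is the vanishing statement
\[
(\star) \qquad \rH^j\bigl(\widetilde{\rJ}^{[p]}(\bbX) \setminus \{0\}\bigr) = 0 \qquad \text{for } 0 < j \leq p - 1,
\]
which is a cohomological analog of the classical topological fact that the join of $p + 1$ non-empty spaces is $(p-1)$-connected.

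Granting $(\star)$, the theorem follows by comparing the Gysin long exact sequences for the two $\bbG_m$-bundles. In both sequences, multiplication by the hyperplane class $c_1 = c_1(\cO(1))$ becomes an isomorphism $\rH^{j-2}(-) \to \rH^j(-)$ in the range where the cohomology of the punctured cone vanishes: for $\bbP^N$ this holds throughout $2 \leq j \leq 2N$ (since $\rH^j(\bbA^{N+1} \setminus 0) = 0$ in that range), and by $(\star)$, the analogous $c_1$-isomorphisms hold for $\rJ^{[p]}(\bbX)$ in the range $2 \leq j \leq p - 1$, together with injectivity of $\cdot c_1 \colon \rH^{p-2}(\rJ^{[p]}(\bbX)) \to \rH^p(\rJ^{[p]}(\bbX))$. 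The restriction map $\rH^*(\bbP^N) \to \rH^*(\rJ^{[p]}(\bbX))$ commutes with $c_1$-multiplication and is an isomorphism in degree $0$ (from the elementary connectedness of $\rJ^{[p]}(\bbX)$), while $\rH^1(\rJ^{[p]}(\bbX)) = 0$ follows directly from the Gysin sequence together with $\rH^1(\widetilde{\rJ}^{[p]} \setminus 0) = 0$. Iterating the $c_1$-isomorphism then propagates the isomorphism up to degree $p - 1$, and the injectivity at $j = p$ follows from the remaining injective piece.

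For $(\star)$ I would induct on $p$ via Mayer--Vietoris. Decompose $\widetilde{\rJ}^{[p]}(\bbX) \setminus \{0\} = A \cup B$ where
\[
A = \widetilde{\rJ}^{[p-1]} \times (C(X_p) \setminus \{0\}), \qquad B = (\widetilde{\rJ}^{[p-1]} \setminus \{0\}) \times C(X_p).
\]
Because each affine cone $C(X_i)$ is $\bbA^1$-contractible to its apex via the scaling action $(t, x) \mapsto tx$, any good (that is, $\bbA^1$-invariant) cohomology theory gives $\rH^*(C(X_i)) = \rH^*(\mathrm{pt})$. Combined with the K\"unneth formula, this identifies $\rH^*(A) \cong \rH^*(C(X_p) \setminus 0)$, $\rH^*(B) \cong \rH^*(\widetilde{\rJ}^{[p-1]} \setminus 0)$, and $\rH^*(A \cap B) \cong \rH^*(\widetilde{\rJ}^{[p-1]} \setminus 0) \otimes \rH^*(C(X_p) \setminus 0)$. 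For $j > 0$, the Mayer--Vietoris restriction $\rH^j(A) \oplus \rH^j(B) \to \rH^j(A \cap B)$ is injective, because its two images land in the distinct K\"unneth summands $\rH^0 \otimes \rH^j$ and $\rH^j \otimes \rH^0$. Hence Mayer--Vietoris identifies $\rH^j(\widetilde{\rJ}^{[p]} \setminus 0)$ with the cokernel
\[
\bigoplus_{\substack{a + b = j - 1 \\ a, b \geq 1}} \rH^a(\widetilde{\rJ}^{[p-1]} \setminus 0) \otimes \rH^b(C(X_p) \setminus 0).
\]
In the range $0 < j \leq p - 1$ one has $a \leq j - 2 \leq p - 3$, so the inductive hypothesis $\rH^a(\widetilde{\rJ}^{[p-1]} \setminus 0) = 0$ for $0 < a \leq p - 2$ kills every summand, establishing $(\star)$.

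The principal subtlety is that the affine cones are typically singular, so one must verify that the chosen ``good cohomology theory'' satisfies all the required properties---Mayer--Vietoris for Zariski open covers, K\"unneth for products of quasi-projective varieties, $\bbA^1$-homotopy invariance, and a Gysin sequence for $\bbG_m$-bundles over possibly singular bases. Spelling out this formalism, especially the Gysin sequence over the singular scheme $\rJ^{[p]}(\bbX)$, is expected to be the most delicate ingredient of the proof.
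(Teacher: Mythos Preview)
Your proposal is correct and follows essentially the same strategy as the paper: the paper's Lemma~2.7 is exactly your statement $(\star)$, proved by the same Mayer--Vietoris/K\"unneth induction with the same open cover $U = \rC'(\rJ^{[p-1]}) \times \rC(X_p)$, $V = \rC^{\times p} \times \rC'(X_p)$, and the deduction of the theorem from $(\star)$ via the Gysin sequence for the $\bbG_m$-bundle (your $c_1$-iteration is the paper's five-lemma on the commutative ladder~(2.9)). Two small points where the paper is more careful: it proves $\rH^*(\rC(X)) \cong \rH^*(o)$ by a blow-up argument (Lemma~2.6) rather than invoking $\bbA^1$-homotopy invariance directly, and it tracks the Tor terms in K\"unneth so that the argument works with $\bbZ$ or $\bbZ_\ell$ coefficients, not just fields; your concern about the Gysin sequence over a singular base is handled by citing SGA5, Expos\'e VII, Cor.~1.5.
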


The cohomology groups appearing in the Theorem may be taken to be $\ell$-adic etale cohomology with $\ell$ a fixed prime not equal to the characteristic of the base field. 
We also prove a similar result under assumptions of `higher' cohomological connectivity of the given schemes.
More precisely, we prove the following result:

\begin{theorem*}[cf. Theorem~\ref{thm:highermultijoinconn}]
Let for $0 \leq i \leq p$, $X_i \subset \bbP^{n}$ be non-empty 
closed 
subschemes, and $d_i \in \mathbb{Z}_{\geq 0}$,
such that the restriction homomorphisms  $\rH^{j}(\bbP^{n}) \rightarrow \rH^{j}(X_i)$ 
are isomorphisms for 
for $0 \leq j < d_i$, and injective for $j = d_i$.
Then the restriction homomorphism
\begin{equation}
\rH^{j}(\bbP^{N}) \rightarrow \rH^{i}(\rJ^{[p]}(X))
\end{equation}
is an isomorphism for $0 \leq j < d+p$, 
and injective for $j = d+p$, 
where $\displaystyle{
d = \sum_{i=0}^{p}d_i
}
$. 
\end{theorem*}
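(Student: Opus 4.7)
The plan is to prove the theorem by induction on $p$, with the base case $p=0$ being the hypothesis itself. For the inductive step, I would decompose the iterated join as $\rJ^{[p]}(\bbX) = \rJ(X_0, Z)$ where $Z = \rJ^{[p-1]}(X_1,\ldots,X_p) \subset \bbP^{N'}$ with $N' = p(n+1)-1$; the inductive hypothesis applied to $Z$ gives that $\rH^j(\bbP^{N'}) \to \rH^j(Z)$ is an isomorphism for $j < (d_1+\cdots+d_p)+(p-1)$ and injective for $j = (d_1+\cdots+d_p)+(p-1)$. This reduces everything to the following binary statement: if $A \subset \bbP^\alpha$ and $B \subset \bbP^\beta$ satisfy the connectivity hypothesis of the theorem with values $a$ and $b$ respectively, then $\rJ(A,B) \subset \bbP^{\alpha+\beta+1}$ satisfies it with value $a+b+1$.

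For the binary case, I would work with the open cover $\rJ(A,B) = U_A \cup U_B$ where $U_A = \rJ(A,B)\setminus B$ and $U_B = \rJ(A,B)\setminus A$, alongside the analogous decomposition $\bbP^{\alpha+\beta+1} = V_A \cup V_B$ with $V_A = \bbP^{\alpha+\beta+1}\setminus\bbP^\beta$ and $V_B = \bbP^{\alpha+\beta+1}\setminus \bbP^\alpha$. The affine-cone description $\rJ(A,B) = (C(A) \times C(B) \setminus \{0\})/\bbG_m$, combined with the $\bbA^1$-homotopy that contracts the ``other'' block of coordinates to zero, produces canonical cohomological equivalences $U_A \simeq A$, $U_B \simeq B$, $V_A \simeq \bbP^\alpha$, $V_B \simeq \bbP^\beta$, and identifies $U_A \cap U_B \to A \times B$ and $V_A \cap V_B \to \bbP^\alpha \times \bbP^\beta$ as $\bbG_m$-bundles associated to a ``twist'' line bundle built from $\mathcal{O}(1)$ on each factor. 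Comparing the two Mayer--Vietoris long exact sequences produces a commutative ladder, and a five-lemma argument reduces the theorem to controlling the restriction $\rH^*(V_A \cap V_B) \to \rH^*(U_A \cap U_B)$ on the two $\bbG_m$-bundles.

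The main obstacle is this last step: via the Gysin sequences for the two $\bbG_m$-bundles, it reduces further to the restriction $\rH^*(\bbP^\alpha \times \bbP^\beta) \to \rH^*(A \times B)$ on the bases, and a naive K\"unneth computation there only yields connectivity $\min(a,b)$, significantly short of the required $a+b+1$. The extra connectivity must be extracted from the $+1$ degree shift in the Gysin sequence together with the nontrivial cup-product action of the Euler class $c_1(\mathcal{O}_A(1)) \boxtimes 1 - 1 \boxtimes c_1(\mathcal{O}_B(1))$, which mixes the two K\"unneth factors. The heart of the proof lies in the careful bookkeeping showing that these shifts and multiplications combine to amplify $\min(a,b)$ to $a+b+1$, by an argument closely analogous to (and, rationally, essentially equivalent to) the classical connectivity formula $\mathrm{conn}(X*Y) = \mathrm{conn}(X) + \mathrm{conn}(Y) + 1$ for the topological join of pointed spaces. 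Since only the formal axioms of Mayer--Vietoris, $\bbA^1$-invariance, K\"unneth, and Gysin are used, the argument applies uniformly to any good cohomology theory and remains compatible with any Hodge or Galois structure one imposes.
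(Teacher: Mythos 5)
Your reduction to the binary statement (if $A\subset\bbP^\alpha$ is cohomologically $a$-connected and $B\subset\bbP^\beta$ is cohomologically $b$-connected then $\rJ(A,B)\subset\bbP^{\alpha+\beta+1}$ is $(a+b+1)$-connected, then induct on $p$) is the right skeleton and is consistent with how the paper organizes things, but your proposed proof of the binary case has a genuine gap, and you have in effect flagged it yourself: the step that turns $\min(a,b)$ into $a+b+1$ is exactly the mathematical content of the theorem, and leaving it as ``careful bookkeeping'' with Euler classes is not a proof. Worse, the reduction you propose before that point is already structurally unsound. In your Mayer--Vietoris ladder the vertical maps on the pieces are $\rH^m(V_A)\to\rH^m(U_A)$, i.e.\ $\rH^m(\bbP^\alpha)\to\rH^m(A)$, and $\rH^m(\bbP^\beta)\to\rH^m(B)$; these are isomorphisms only for $m<a$ (resp.\ $m<b$), far below the target range $m<a+b+1$. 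So the five lemma does not reduce the problem to the intersection term $\rH^*(V_A\cap V_B)\to\rH^*(U_A\cap U_B)$: even complete control of that map would not let you conclude. A toy example makes this concrete: take $A$ and $B$ to be points in $\bbP^1$, so $a=b=1$ and $\rJ(A,B)=\bbP^1\subset\bbP^3$ must be $3$-connected; but $\rH^2(V_A)\oplus\rH^2(V_B)=\rH^2(\bbP^1)^{\oplus 2}\neq 0$ maps to $\rH^2(U_A)\oplus\rH^2(U_B)=\rH^2(\mathrm{pt})^{\oplus 2}=0$, so the ladder's verticals fail already at $m=2$ and the desired isomorphism at $m=2$ can only come from an analysis of the connecting maps, which your outline does not supply.

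The paper's route avoids this by never comparing relative (iso-with-$\bbP^n$) statements via K\"unneth in projective space, which is what caps you at $\min(a,b)$. Instead it passes to punctured affine cones: $\rC'(\rJ(A,B))\cong(\rC(A)\times\rC(B))\setminus o$, covered by $\rC'(A)\times\rC(B)$ and $\rC(A)\times\rC'(B)$ with intersection $\rC'(A)\times\rC'(B)$. Since cones are cohomologically trivial (Lemma~\ref{lem:conecontract}) and $\rC'(A)$, $\rC'(B)$ satisfy \emph{absolute} vanishing $\rH^i=0$ for $0<i<a$ (resp.\ $0<i<b$) by Lemma~\ref{lem:conehigher}, the K\"unneth formula for the intersection produces vanishing additively in the range $0<m<a+b$ up to edge terms that exactly match the two pieces, and Mayer--Vietoris then gives vanishing of $\rH^i(\rC'(\rJ(A,B)))$ for $0<i\le a+b$; the Gysin (or Leray) comparison of the $\bbG_m$-bundles $\rC'(\rJ(A,B))\to\rJ(A,B)$ and $\bbA^{\alpha+\beta+2}\setminus 0\to\bbP^{\alpha+\beta+1}$ then transfers this back to the projective statement with the extra $+1$, which is precisely Lemma~\ref{lem:conehigherconn} and the proof of Theorem~\ref{thm:iteratedhigherconn}, adapted verbatim to the multi-join. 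If you want to salvage your outline, replace the cover of $\rJ(A,B)$ by this cover of its punctured cone; the point is that vanishing statements multiply well under K\"unneth, whereas ``agrees with $\bbP^n$ up to degree $a$'' does not.
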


Note that topological connectivity properties (in the Zariski topology) of joins of projective varieties  have been
considered by various authors  (see for example the book 
\cite{Flenner-et-al}).
The main emphasis in these previous works was  on  studying Grothendieck's notion of `$d$-connectedness'. 
A projective variety $V$ is \emph{$d$-connected}  if $\dim X > d$ and $X \setminus Y$ is connected for all closed
subvarieties $Y$ of dimension $< d$. It is a classical result \cite[\S 3.2.4]{Flenner-et-al}, that
if $X$ is $d$-connected and $Y$ is $e$-connected then $J(X,Y)$ is $(d+e+1)$-connected. One can easily generalize this to the setting of multi-joins. While this result is philosophically similar to the aforementioned cohomological connectivity of the join, one cannot infer Theorem~\ref{thm:connjoin} from this result. In particular, it is easy to come up
with examples of projective varieties $X \subset \bbP^n$, such that $X$ is $d$-connected, but
the restriction homomorphism  $\HH^i(\bbP^n) \rightarrow \HH^i(X)$ is not an isomorphism
for some $i, 0 \leq i < d$. \\

The notion of cohomological connectivity considered in this paper is distinguished from Grothendieck connectivity
in another significant way.
We prove  relative versions (see Theorems~\ref{thm:relconnjoin} and \ref{thm:relconnjoin-multi}) 
of our connectedness theorems 
where the join is replaced by the relative join. 
This relative version  (namely, Theorem~\ref{thm:relconnjoin}) is in fact the key to the main applications of our connectivity theorem. 
It allows us  to relate the Poincar\'e polynomial of the image of a closed projective  scheme  with that of the iterated relative join (relative to 
the projection morphism).
\footnote{
Note that we define $P(X)(T) = \sum_{i \geq 0} \dim_{\mathbb{Q}_\ell} \HH^i(X,\mathbb{Q}_\ell)T^i$ \eqref{defn:poincarepoly}.
}
More precisely, we obtain:

\begin{theorem*} [cf. Theorem~\ref{thm:poincare}]
Let $S =  \bbP^{m}$, 
$X \subset \bbP^{n} \times \bbP^{m}$, and $\pi: \bbP^{n} \times \bbP^{m} \to \bbP^{m}$ the projection morphism.
Then,
\begin{eqnarray*}
P({\rJ^{[p]}_{S}(X)}) &\equiv&  P({\pi(X)})(1+T^{2}+ T^{4} +\cdots + T^{2((p+1)(n+1)-1)}) \  {\rm mod}  \ T^{p} .
\end{eqnarray*}
(Here, $\rJ^{([p]}_S(\cdot)$ denote the $p$-fold iterated relative join over $S$, and $P(\cdot)$ the Poincar\'e polynomial.)
\end{theorem*}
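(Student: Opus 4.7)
The plan is to reduce the computation of $P(\rJ^{[p]}_{S}(X))$ modulo $T^{p}$ to a K\"unneth computation on a product, by invoking the relative connectivity theorem. Write $N = (p+1)(n+1)-1$. Since the projection $\pi\colon \bbP^{n}\times\bbP^{m} \to \bbP^{m}$ is proper, $\pi(X) \subset \bbP^{m}$ is closed. The fiber of $\rJ^{[p]}_{S}(X)$ over $s \in S$ is $\rJ^{[p]}(X_{s})$, which is non-empty precisely when $X_{s}$ is non-empty, i.e., when $s \in \pi(X)$. Consequently $\rJ^{[p]}_{S}(X)$ is set-theoretically supported over $\pi(X)$, and naturally sits as a closed subscheme of the trivial projective bundle $\bbP^{N}\times \pi(X) \subset \bbP^{N}\times S$.

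With this embedding in hand, I would apply the relative version of the main connectivity theorem (Theorem~\ref{thm:relconnjoin}) over the base $\pi(X)$, which yields that the restriction homomorphism
\begin{equation*}
\rH^{j}(\bbP^{N} \times \pi(X)) \longrightarrow \rH^{j}(\rJ^{[p]}_{S}(X))
\end{equation*}
is an isomorphism for $0 \leq j < p$ (and injective for $j = p$). In particular, the Betti numbers of the two spaces coincide in all degrees $j < p$, so their Poincar\'e polynomials agree modulo $T^{p}$. The final step is to expand $P(\bbP^{N}\times \pi(X))$ via the K\"unneth formula: since $\rH^{\bullet}(\bbP^{N})$ is one-dimensional in even degrees $0,2,\ldots,2N$ and zero in odd degrees, one has $P(\bbP^{N}) = 1 + T^{2} + \cdots + T^{2N}$, and hence
\begin{equation*}
P(\bbP^{N}\times\pi(X)) \;=\; P(\pi(X))\cdot(1+T^{2}+\cdots+T^{2N}),
\end{equation*}
which is precisely the right-hand side of the claimed congruence.

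The main obstacle, as I see it, is the second step: one must verify that the relative connectivity theorem applies cleanly in the form above, where the base is the (possibly singular and reducible) closed subvariety $\pi(X) \subset \bbP^{m}$ and the ambient bundle is the trivial one $\bbP^{N}\times\pi(X)$ rather than some twist. In particular, one should confirm that the iterated relative join construction yields a closed subscheme of this trivial bundle (so K\"unneth applies without correction) and that the chosen cohomology theory behaves well under base change to $\pi(X)$. Once this compatibility is in place, the K\"unneth step and the truncation modulo $T^{p}$ are entirely routine.
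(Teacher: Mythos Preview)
Your proposal is correct and follows essentially the same approach as the paper: the paper's proof is the one-liner ``Direct consequence of Theorem~\ref{thm:relconnjoin}'', and the identification $\bbP(\cF^{\oplus(p+1)}) \cong \pi(X)\times\bbP^{N}$ that you worry about in your last paragraph is exactly what the paper records in Example~\ref{eg:poincare-product} (the bundle $\cE$ is trivial, so the restricted bundle $\cF$ is trivial over $\pi(X)$, and K\"unneth applies directly).
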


The cohomological connectivity property of the iterated relative join of a complex algebraic set     $X \subset \bbP_\bbC^m \times \bbP^n_\bbC$ relative to the proper morphism $\pi:X \rightarrow \bbP^m_\bbC$ (the restriction of the projection
$\bbP_\bbC^m \times \bbP^n_\bbC \rightarrow \bbP_\bbC^m$ to $X$)
was first investigated in \cite{Basu-Toda}. A complex version of Theorem~\ref{thm:poincare}
valid for singular cohomology was obtained there (though not stated in the language of cohomological connectivity). 
The motivation in 
\emph{loc. cit.}
was to prove an analog of  a certain result from the theory of computational complexity (Toda's theorem \cite{Toda}) in the complex algebraic setting. 
The  relation between Poincar\'e polynomials in the above theorem 
was the key input in the proof of the complex analog of 
Toda's theorem.
However, the argument in 
\emph{loc. cit.}
was topological, and heavily used  the analytic topology of complex varieties. 
Our result  extends the topological result
in 
\emph{loc. cit.}
to the setting of etale cohomology of projective schemes of finite type over a base field of arbitrary characteristic. This significantly widens the applicability of our main results.  For example, 
using our more general result we are now able to extend Toda's theorem
to algebraically closed fields of arbitrary characteristic. \\

We also give several other applications 
of our results. 
These applications are mostly quantitative in nature and impinges on 
model theory as well as on the theory of computational complexity. 
We discuss these applications in the next paragraphs.\\
 
\subsection{Cohomological quantifier elimination}
Our first application is related to the topic of `quantifier elimination' in the first order theory of algebraically closed fields.
It is a well known fact in model theory that the first order theory of algebraically closed fields (for any fixed characteristic) admits quantifier elimination. This is also known as  Chevalley's theorem. More precisely,
for $k$ an algebraically closed field 
and with tuples of variables $\bX = (X_1,\ldots, X_m), \bY = (Y_1,\ldots,Y_n)$,
a \emph{quantifier-free first order formula in the language of the field $k$}
is a Boolean formula with atoms of the
form $P(\bX,\bY) = 0, P \in k[\bX,\bY]$.
A \emph{first order formula in the language of the field $k$} is of the form
\[
\phi(\bX,\bY) = (\bQ_1 X_1)\cdots(\bQ_m X_m) \psi(\bX,\bY),
\]
where $\psi$ is a quantifier-free first order formula and each $\bQ_i$ is a quantifier belonging to $\{\exists,\forall\}$.
\footnote{We refer the reader who is unfamiliar with model theory terminology to the book \cite{Poizat} for all
the necessary background that will be required in this article.}\\

Any first order formula $\phi(\bY)$  in the language of an algebraically closed field $k$ defines (in an obvious way) a subset  $\Reali(\phi)$ of $\bbA^{n}$,  
where $n$ is the length of the tuple $\bY$.\footnote{Here, $\bbA^n$ denotes the usual $n$-dimensional affine space over $k$.} If the $n=0$ (i.e. the set of
free variables $\bY$ is empty), then the formula $\phi$ is called a \emph{sentence}, and there are only two possibilities for 
$\Reali(\phi)$. Either $\Reali(\phi) = \bbA^0$, in which case we say that $\phi$ is True (or equivalently $\phi$  belongs to the first order theory of $k$), or
$\Reali(\phi) = \emptyset$, 
in which case we say that $\phi$ is False (or $\neg\phi$ belongs to the first order theory of $k$). 
The quantifier elimination property of the theory of algebraically closed fields can now 
 be stated as:

\begin{thmx}[Quantifier-elimination in the theory of algebraic closed fields]
\label{thm:classical-qe}
Let $k$ be an algebraically closed field. Then, every first order formula
\[
\phi(\bY) = (\bQ_1 X_1)\cdots(\bQ_m X_m) \psi(\bX,\bY),
\]
in the language of the field $k$,  
there exists a quantifier-free formula
$\phi'(\bY)$ such that 
\[
\Reali(\phi) = \Reali(\phi').
\]
\end{thmx}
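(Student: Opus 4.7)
The plan is to reduce the theorem to the classical geometric assertion that the image of a constructible subset of affine space under a coordinate projection is again constructible (Chevalley's theorem on constructible sets), which is the scheme-theoretic counterpart of quantifier elimination. The reduction proceeds by induction on the number $m$ of quantifiers. Because $(\forall X)\psi \equiv \neg(\exists X)\neg\psi$ and because negations and Boolean combinations of quantifier-free formulas remain quantifier-free, it suffices to treat the case $m=1$: given any quantifier-free $\psi(X,\bY)$, produce a quantifier-free $\phi'(\bY)$ with $\Reali(\phi')=\Reali((\exists X)\psi)$. The outer quantifiers are then stripped off one at a time from the inside out.

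Geometrically, a quantifier-free formula in the language of $k$ cuts out a constructible subset of the relevant affine space, since it is a Boolean combination of zero sets of polynomials. Thus $\Reali(\psi)\subseteq\bbA^{1}\times\bbA^{n}$ is constructible, and $\Reali((\exists X)\psi)$ is precisely its image $\pi(\Reali(\psi))$ under the projection $\pi:\bbA^{1}\times\bbA^{n}\to\bbA^{n}$. Consequently Theorem~\ref{thm:classical-qe} reduces to proving that $\pi(\Reali(\psi))$ is constructible in $\bbA^{n}$, and hence definable by a quantifier-free formula $\phi'(\bY)$.

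To prove this projection statement, I would decompose $\Reali(\psi)$ as a finite union of locally closed pieces of the form $V(f_{1},\ldots,f_{r})\setminus V(g)$ with $f_{i},g\in k[X,\bY]$ and handle each piece separately. For fixed $\by\in\bbA^{n}$, the condition that $f_{1}(X,\by),\ldots,f_{r}(X,\by)$ have a common root in $k$ at which $g(X,\by)$ does not vanish is encoded by a finite system of polynomial equalities and inequalities in the coefficients of the $f_{i}$ and $g$, regarded as polynomials in $X$. Concretely one uses resultants together with the division algorithm in $k(\bY)[X]$: the resultant $\operatorname{Res}_{X}(f_{i},f_{j})$ and, after replacing the system by $(f_{i},\gcd)$, the resultant with $g$ detect the existence and non-existence of shared roots, provided leading coefficients in $X$ do not degenerate. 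Here the algebraic closedness of $k$ is essential: it converts ``resultant vanishes'' into ``there exists a common root in $k$''.

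The principal obstacle is that the leading coefficients of the $f_{i}$ with respect to $X$ are themselves polynomials in $\bY$ that may vanish along subvarieties of $\bbA^{n}$, so the naive resultant computation is only valid on the open locus where these leading coefficients do not vanish. The main technical work is therefore a careful case analysis: stratify $\bbA^{n}$ according to which leading coefficients vanish, and within each stratum recursively reduce to lower-degree polynomials in $X$ (the recursion terminates because the $X$-degree drops strictly). On each piece the argument of the previous paragraph produces a quantifier-free condition on $\bY$; assembling all the pieces by a finite Boolean combination yields the required quantifier-free formula $\phi'(\bY)$ equivalent to $(\exists X)\psi(X,\bY)$, completing the inductive step and hence the proof.
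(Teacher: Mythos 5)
The paper does not actually prove Theorem~\ref{thm:classical-qe}: it is quoted as the classical quantifier-elimination (Chevalley/Tarski) theorem for algebraically closed fields, with a footnote pointing to Poizat's book for background, so there is no proof in the paper to compare yours against. Judged on its own, your proposal is the standard elementary proof and its skeleton is sound: reduce to a single existential quantifier (handling $\forall$ by double negation), note that $\exists X$ distributes over a disjunctive normal form so that each piece has the shape $f_1=\cdots=f_r=0 \wedge g\neq 0$ (several inequations collapse into one by taking a product), and then eliminate $X$ by stratifying the $\bY$-space according to which leading coefficients in $X$ vanish, using resultants/subresultants or pseudo-division on each stratum and recursing on the $X$-degree; algebraic closedness enters exactly where you say it does, in converting vanishing of a resultant into existence of a common root in $k$. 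The only points needing more care than your sketch supplies are routine parts of the case analysis you already flag: the strata where some $f_i$ becomes identically zero in $X$, the case $r=0$, where $\exists X\,(g\neq 0)$ holds iff some coefficient of $g$ as a polynomial in $X$ is nonzero (this uses that $k$ is infinite), and the precise reduction of the system $f_1,\dots,f_r$ to a single polynomial (e.g.\ an iterated subresultant gcd) before testing against $g$, for instance via the condition that this gcd does not divide $g^{d}$ with $d$ its $X$-degree. None of these is a gap in the idea; your plan, carried out with that bookkeeping, is a correct and self-contained proof of the theorem, equivalent to deducing it from Chevalley's constructibility theorem as you indicate.
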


At the cost of being redundant (for reasons that will become apparent in the following paragraphs)  
we state the following corollary of Theorem~\ref{thm:classical-qe} in the case $\bY$ is empty.
With the same hypothesis as in Theorem~\ref{thm:classical-qe}:
\begin{corollaryx}
\label{cor:classical-qe}
\[
\phi  \Leftrightarrow (\Reali(\phi') = \bbA^0).
\]
\end{corollaryx}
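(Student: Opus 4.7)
The plan is essentially to unravel the definitions. Since $\bY$ is the empty tuple of free variables, the affine space $\bbA^0 = \Spec(k)$ consists of a single geometric point. Consequently, for any first order sentence $\phi$ in the language of the field $k$, the realization $\Reali(\phi)$ is a subset of $\bbA^0$, so it is either $\bbA^0$ or $\emptyset$. By the definition given just before the statement of Theorem~\ref{thm:classical-qe}, the sentence $\phi$ is True (equivalently, belongs to the first order theory of $k$) precisely when $\Reali(\phi) = \bbA^0$, and is False precisely when $\Reali(\phi) = \emptyset$. This dichotomy is the only ingredient one needs beyond Theorem~\ref{thm:classical-qe} itself.

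Next, I would apply Theorem~\ref{thm:classical-qe} with the empty tuple of free variables. This produces a quantifier-free formula $\phi'$ (necessarily also in zero free variables, hence a Boolean combination of atoms of the form $c = 0$ for constants $c \in k$) such that $\Reali(\phi) = \Reali(\phi')$. Chaining this equality with the biconditional of the previous paragraph yields
\[
\phi \;\Leftrightarrow\; \bigl(\Reali(\phi) = \bbA^0\bigr) \;\Leftrightarrow\; \bigl(\Reali(\phi') = \bbA^0\bigr),
\]
which is the asserted equivalence.

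There is no genuine obstacle here: the corollary is a tautological rephrasing of Theorem~\ref{thm:classical-qe} specialized to sentences. As the authors signal by their parenthetical remark, the point of recording this seemingly redundant formulation is conceptual rather than mathematical. It recasts the truth of a first order sentence as a purely geometric condition on the realization set of an associated quantifier-free formula, namely that this set equals all of $\bbA^0$. This is precisely the shape of statement that admits a cohomological upgrade, in which the condition $\Reali(\phi') = \bbA^0$ is replaced by, or detected through, a condition on the cohomology of an auxiliary projective variety constructed from $\phi'$ via iterated relative joins; the connectivity results of Theorems~\ref{thm:connmultijoin} and \ref{thm:highermultijoinconn} (together with the Poincar\'e-polynomial computation of Theorem~\ref{thm:poincare}) are exactly the tools that will make such a cohomological reformulation possible in the sequel.
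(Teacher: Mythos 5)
Your proof is correct and matches the paper's treatment: the paper gives no separate argument, regarding the corollary as an immediate consequence of Theorem~\ref{thm:classical-qe} together with the dichotomy $\Reali(\phi) \in \{\bbA^0, \emptyset\}$ for sentences, which is exactly the unraveling you carry out.
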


We introduce in this paper a cohomological variant of quantifier elimination. 
We restrict our attention to what we call  \emph{proper}  formulas (cf. Definition \ref{def:proper-formulas}). 
Just like a first order formula  defines a constructible subset of $\bbA^n$,
a proper formula defines an algebraic subset of some products of $\bbP^n$'s. 
Given a (possibly quantified)  proper formula $\psi$ over an algebraically closed field (of arbitrary characteristic),
we produce a quantifier-free formula 
\begin{equation}
\label{eqn:J}
\psi' = J(\psi)
\end{equation}
(also proper) from $\psi$.  
(The notation $J(\cdot)$ and its connection to the join will be clear from its definition given in 
Notation \ref{not:generalized-join} in Section~\ref{sec:qe-join}.)
While not being equivalent to $\psi$ in the strict sense of model theory, 
$\psi'$ is related to
$\psi$ via a cohomological invariant (closely related to the Poincar\'e polynomial which we call the `pseudo-Poincar\'e polynomial'). \\

This invariant of 
$\psi$ can be recovered from that of the quantifier-free formula $\psi'$ using only arithmetic over $\bbZ$.
More precisely, we prove that there exists an operator $F^\omega:\bbZ[T] \rightarrow \bbZ[T]$ 
(whose definition we omit right now but can be found in \eqref{eqn:F-omega}) 
which depends only on the sequence, $\omega$, of quantifiers and the block sizes in the proper quantified formula $\psi$,
such that  the following equality holds: 

\begin{thmx}[cf. Theorem~\ref{thm:qe}]
\label{thm:cohomological-qe}
\[
Q({\psi}) =  F^\omega (Q({\psi'})).
\]
Here, $Q(\phi)$ denotes the pseudo-Poincar\'e polynomial  (see \eqref{eqn:pseudo-Poincarepolynomial3} for definition of $Q(\phi)$) of the algebraic set defined by $\phi$ for any proper formula $\phi$.
\end{thmx}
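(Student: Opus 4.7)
The plan is to induct on the number of quantifier blocks in the prefix $\omega = (\bQ_1, \ldots, \bQ_s)$, writing $F^\omega$ as a composition of block-level operators on $\bbZ[T]$. The base case, where $\psi$ is already quantifier-free, is trivial: take $F^\omega = \mathrm{id}$ and $J(\psi)=\psi$. For the inductive step, peel off the outermost block. Write $\psi = (\bQ_1 \bX_1)\psi_1(\bX_1,\bY)$, where $\bX_1$ is a tuple of projective variables whose length determines the block size, and $\psi_1$ has quantifier prefix $\omega' = (\bQ_2,\ldots,\bQ_s)$. By construction (to be given in Notation~\ref{not:generalized-join}), $J(\psi)$ is obtained by replacing this outermost block in $J(\psi_1)$ by the appropriate iterated relative join, so it suffices to produce an operator $F_{\bQ_1}$ that recovers $Q(\psi) \bmod T^{p}$ from $Q((\bQ_1 \bX_1)J(\psi_1)) \bmod T^p$, and then set $F^\omega = F^{\omega'} \circ F_{\bQ_1}$ (with compatible truncation).

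For the existential case, $\bQ_1 = \exists$, the realization $\Reali(\psi)$ is, by the properness hypothesis on the formula, the image of $\Reali(\psi_1)$ under the projection $\pi$ forgetting the $\bX_1$-coordinates. Theorem~\ref{thm:poincare} (applied with pseudo-Poincar\'e polynomials in place of Poincar\'e polynomials, which is valid because $Q$ is defined so as to be compatible with the cohomological arguments underpinning that theorem) yields an identity of the shape
\[
Q\bigl(J(\psi)\bigr) \equiv Q(\psi)\cdot \Phi(T) \pmod{T^p},
\]
where $\Phi(T) = 1 + T^2 + T^4 + \cdots + T^{2((p+1)(n+1)-1)}$ depends only on the block size and the dimension of the ambient product of projective spaces. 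Since $\Phi(T)$ has constant term $1$, it is a unit in $\bbZ[T]/(T^p)$, and multiplication by $\Phi(T)^{-1} \bmod T^p$ gives the desired linear operator $F_\exists$. For $\bQ_1 = \forall$ we use $\forall = \neg\, \exists\, \neg$; on pseudo-Poincar\'e polynomials the complementation inside a fixed ambient product of projective spaces is realized by a fixed linear operator $C$ on $\bbZ[T]$ obtained from the long exact sequence of the pair (closed subscheme, open complement) together with the known $Q$-values of the ambient space. Then $F_\forall = C \circ F_\exists \circ C$.

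The main obstacle, and the point at which the excerpted parts of the paper must be combined most carefully, is the bookkeeping of truncation moduli and ambient dimensions as one composes $F_{\bQ_1}, F_{\bQ_2}, \ldots$: each block only recovers pseudo-Poincar\'e polynomial information up to the degree dictated by the \emph{current} block size, and one must verify that the cumulative truncation across all $s$ blocks is still large enough to recover the full $Q(\psi)$. A secondary subtlety is ensuring that the passage from Theorem~\ref{thm:poincare} to its variant with free parameters $\bY$ and with $Q$ in place of $P$ really does go through verbatim; this requires that the relative join construction behaves well in families and that the cohomological connectivity supplied by Theorem~\ref{thm:relconnjoin-multi} applies fiber-wise to the constructible pieces on which $Q$ is additive. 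Once this apparatus is in place, unwinding the recursion reads off $F^\omega$ as the prescribed composition.
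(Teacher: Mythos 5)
Your overall strategy is the same as the paper's (eliminate one quantifier block at a time, use the join--projection comparison for $\exists$, conjugate by a complementation/duality operator for $\forall$, and compose the resulting operators on $\bbZ[T]$), but as written there are two genuine gaps. The first is the $\forall$ case. Writing $\forall=\neg\exists\neg$ and setting $F_\forall = C\circ F_\exists\circ C$ is the right \emph{shape} (it matches the paper's $\Rec\circ\Trunc\circ(1-T)^{N}\circ\Rec$), but the middle step then has to be an $\exists$-type identity for the \emph{open complement}: one needs that the pseudo-Poincar\'e polynomial of $\pi(W)$, $W=\bbP^{\bn}\setminus V$, is recovered (up to the unit factor and truncation) from that of $\bbP^{\bN}\setminus \rJ^{[p]}_\pi(V)$. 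This is not a formal consequence of Theorem~\ref{thm:poincare}: the complement is open, $\pi$ restricted to it is not proper, proper base change is unavailable, and the join of the complement is not the complement of the join. The paper proves this statement separately (Part 2 of Theorem~\ref{the:compactcovering_general}) by a dimension estimate on $Z=\rJ^{[p]}_\pi(V)\cap(\bbP^{(n_1+1)(p+1)-1}\times\pi(W))$ combined with Alexander duality; without an argument of this kind your $F_\forall$ is unjustified.

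The second gap is in the structure of your induction. The single-block theorem does not let you recover $Q(\psi)$ from $Q((\bQ_1\bX_1)J(\psi_1))$; what it gives (for $\bQ_1=\exists$, closed case) is a relation between $Q(\psi)$ and $Q(\Theta)$, where $\Theta$ is the conjunction of sufficiently many copies of the \emph{still quantified} $\psi_1$ with independent copies of the $\bX_1$-variables, i.e.\ the fiberwise join of $\Reali(\psi_1)$ over the free-variable space. One then applies the inductive hypothesis to $\Theta$ (a proper formula with prefix $\omega'$ and larger free blocks), so the operators compose as $F^\omega=F_{\bQ_1}\circ F^{\omega'}$, with the outermost-block operator applied \emph{last} --- the reverse of your $F^{\omega'}\circ F_{\bQ_1}$. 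Moreover, knowing the numerical identity $Q(\psi_1)=F^{\omega'}(Q(J(\psi_1)))$ is not enough to commute the inner-block recovery past the outer quantifier: since $Q$ of a set does not determine $Q$ of its image, the induction must carry cohomological isomorphism statements (the fiberwise connectivity of the join applied to the closed realizations of the quantified formulas, the product lemma, and proper base change over the realization of the already-processed part), which is exactly what the paper's intermediate formulas $\psi^\omega_j$ and Theorem~\ref{the:compactcovering_general} provide. Finally, two bookkeeping points you flag but should not underestimate: in the $Q$-grading the unit is $1+T+\cdots+T^{M}$ with inverse $1-T$ modulo a \emph{halved} truncation (your $\Phi$ with powers of $T^2$ is the $P$-version), and the number of join copies must grow at each level (the paper's recursion for $d_j,N_j,m_j$) so that the connectivity surviving after each truncation still covers the degrees needed at the next stage; this is where the quantitative content of $F^\omega$ lives.
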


The above theorem deserves the moniker `quantifier elimination' once we substitute the realization map
$\Reali(\cdot)$, which takes formulas to constructible sets in Theorem~\ref{thm:classical-qe}, by the map $Q(\cdot)$ which takes formulas
to $\bbZ[T]$. While we have an absolute equality $\Reali(\phi) = \Reali(\phi')$ in Theorem~\ref{thm:classical-qe}, 
in Theorem~\ref{thm:cohomological-qe}, the polynomials $Q(\psi)$ and $Q(\psi')$ are related via the map $F^\omega$.
In the case of sentences (i.e. when the set of free variables is empty)  we have the  
(perhaps even more suggestive)  corollary (compare with Corollary~\ref{cor:classical-qe}):

\begin{corollaryx}
\label{cor:cohomological-qe}
\[
\psi   \Leftrightarrow (F^\omega(Q({\psi'})) = 1).
\]
\end{corollaryx}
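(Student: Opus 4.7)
The plan is to derive Corollary~\ref{cor:cohomological-qe} as an essentially immediate specialization of Theorem~\ref{thm:cohomological-qe} to the case of sentences, where the notion of truth of $\psi$ can be read off from a single numerical invariant of its realization. Since proper formulas define algebraic subsets of products of projective spaces, a sentence $\psi$ (one with no free variables) has its realization sitting in the empty product of projective spaces, which should be interpreted as $\Spec k$, a single reduced point. Consequently $\Reali(\psi)$ is either $\Spec k$ or $\emptyset$, and by the standing convention set up in the paper, $\psi$ is True precisely when $\Reali(\psi) = \Spec k$.

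Next I would unwind the definition of $Q$ (the pseudo-Poincar\'e polynomial, cf.~Definition~\ref{def:pseudo-Poincarepolynomial3}) on these two possible realizations. For any good cohomology theory, the point has $\rH^0(\Spec k) = k$ (or the appropriate coefficient ring) and vanishing higher cohomology, giving $Q(\psi) = 1$. For the empty scheme all cohomology vanishes, giving $Q(\psi) = 0$. Thus in the sentence case the value of $Q(\psi)$ is a faithful encoding of the truth of $\psi$: one has
\[
\psi \Leftrightarrow Q(\psi) = 1.
\]

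Finally, I would feed this equivalence into Theorem~\ref{thm:cohomological-qe}, which asserts $Q(\psi) = F^\omega(Q(\psi'))$. Combining the two statements immediately yields
\[
\psi \Leftrightarrow F^\omega(Q(\psi')) = 1,
\]
which is exactly the claimed corollary.

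The only place that requires any real care, and which I expect to be the genuine (if modest) obstacle, is confirming that the degenerate cases behave as asserted: namely that the ``empty product'' of projective spaces is indeed being treated as $\Spec k$ in the definition of $\Reali(\cdot)$ on proper formulas, and that the pseudo-Poincar\'e polynomial $Q$, as normalized in Definition~\ref{def:pseudo-Poincarepolynomial3}, does give $1$ on a point and $0$ on the empty scheme (rather than, say, a constant shift). Once these normalizations are verified against the definitions earlier in the paper, no further argument beyond invoking Theorem~\ref{thm:cohomological-qe} is needed.
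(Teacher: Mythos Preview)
Your proposal is correct and matches the paper's approach: the paper's own proof of the parallel statement (Corollary~\ref{cor:qe}) is simply ``Follows immediately from Theorem~\ref{thm:qe},'' and you have spelled out precisely the implicit reasoning behind that one-liner. The normalization checks you flag (that $\Reali(\psi)$ for a sentence is a point or empty, with $Q$-values $1$ and $0$ respectively) are indeed the only content, and they are consistent with the paper's conventions.
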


The main advantage of the cohomological variant over usual quantifier elimination becomes apparent when
viewed through the lens of `complexity'.
In the traditional quantifier elimination (Theorem~\ref{thm:classical-qe} above)
the quantifier-free formula $\phi'$ can be potentially much more complicated than $\phi$ -- for instance, the degrees of the polynomials appearing in the atoms of $\phi'$ could be much bigger
than those of the polynomials appearing in the atoms of $\phi$ (see for example \cite{Heintz})  -- and there is no direct way of producing $\phi'$ from
$\phi$ without using algebraic constructions such as taking resultants of polynomials appearing in $\phi$ etc. (see Example~\ref{eg:complexity} below).\\

Bounding the `complexity' of the quantifier-free $\phi'$ in terms of that of  $\phi$ is an extremely well-studied question  (see for example \cite{Heintz} for the state-of-the-art)  
with many ramifications.
Indeed, the well known P vs NP question in computational complexity --
say in the  Blum-Shub-Smale (henceforth, B-S-S) model of computation \cite{BSS} -- 
is fundamentally about comparing the complexities of sequences of varieties which
belong to an `easy' class (i.e. the B-S-S complexity class $\mathrm{P}$), 
with the complexities
of sequences obtained by taking images under certain projections of sequences belonging to
the `easy' class (by taking the images under projections of sequences in the class $\mathrm{P}$
one obtains the B-S-S complexity class $\mathrm{NP}$). A formal definition
of $\mathrm{P}, \mathrm{NP}$ in the B-S-S sense can be found in \cite{Basu-Toda} and 
will not be repeated here. \\

The notion of `complexity' of a formula that we use  is made precise later (cf. Definition \ref{def:complexity}). However 
any reasonable notion of 
`complexity'  (for example, taking it to be the maximum of the degrees of polynomials that appear in it)
 suffices for the following discussion.
The best known upper bound on the `complexity' of $\phi'$ is  exponential  in that of $\phi$ \cite{Heintz}, even when the number of blocks of quantifiers is fixed and it is 
considered highly unlikely that this could be improved (see Example~\ref{eg:complexity} below).
The crucial advantage of `cohomological quantifier elimination'   over ordinary quantifier elimination (i.e. Theorem~\ref{thm:cohomological-qe} over Theorem~\ref{thm:classical-qe}) is that
the quantifier-free formula $\psi'$ has `complexity' which is bounded polynomially in that of $\psi$ (when
the length of $\omega$ is fixed).
This fact follows from the fact that $\psi'$ can be expressed in terms of $\psi$ in a uniform way -- without having to do any algebraic operations. 
Thus, while the relation between
the quantifier-free formula $\psi'$ and $\psi$ is weaker than in the case of quantifier elimination in the usual sense, it is obtained
much more easily from $\psi$ without paying the heavy price inherent in the quantifier elimination process.

\begin{example}
\label{eg:complexity}
A classical example of the blow-up in complexity on passing from $\phi$ to $\phi'$ is illustrated  in the following well-studied example. 

Let $k$ be an algebraically closed field, $V_{d,n} = \left( \Sym^d (k^{n+1})^*\right) ^{\oplus (n+1)}$, and $W_n = k^{n+1}$.  
Let $\phi_{d,n}(\mathbf{f}_0,\ldots,\mathbf{f}_n,\mathbf{X}) $ be the proper formula 
\[
(\exists \mathbf{X}) \bigwedge_{i=0}^{n} \mathbf{f}_i(\mathbf{X}) = 0,
\]
(identifying elements of $\Sym^d (k^{n+1})^*$ with  the vectors of coefficients of forms of degree $d$).
Let $X$ be the subvariety of $\bbP(V_{d,n}) \times \bbP(W_n)$ defined by
\begin{equation}
\label{eqn:eg:complexity:X}
X_{d,n} = \{([(\mathbf{f}_0,\ldots,\mathbf{f}_n)],[\mathbf{x}]) \mid \mathbf{f}_0([\mathbf{x}]) = \cdots = \mathbf{f}_n([\mathbf{x}]) = 0\},
\end{equation}
and 
\begin{equation}
\label{eqn:eg:complexity:pi}
\pi_{d,n}: \bbP(V_{d,n}) \times \bbP(W_n) \rightarrow \bbP(V_{d,n})
\end{equation}

the projection morphism.
Then the image, $\pi_{d,n}(X_{d,n})$,  is a subvariety (hypersurface)  of $\bbP(V_{d,n})$ defined by a polynomial
$R(\mathbf{f}_0,\ldots,\mathbf{f}_n)$
 (the resultant of the forms $\mathbf{f}_0,\ldots,\mathbf{f}_n$) of 
degree $(n+1)d^n$ (see for example \cite[Chapter 13, Prop. 1.1]{GKZ}). 
Notice that $\pi_{d,n}(X_{d,n}) = \Reali(\phi_{d,n})$, and in this case a quantifier-free formula $\phi'_{d,n}$ equivalent
to $\phi_{d,n}$ is given by $\phi'_{d,n} =  (R(\mathbf{f}_0,\ldots,\mathbf{f}_n)= 0)$. 
If one measures the complexity of a formula by the maximum degree of the polynomials appearing in it, we see that in this case the
complexity of $\phi_{d,n}$ is bounded by $d$, while that of $\phi'_{d,n}$ is $(n+1)d^n$ which is exponentially 
large. 
In contrast, in this case it follows from the definition of $J(\cdot)$ (Notation \ref{not:generalized-join} in Section~\ref{sec:qe-join}) that the 
complexity of the quantifier-free formula $J(\phi_{d,n})$ (cf. \eqref{eqn:J}) is bounded by $d$. 
Moreover, the operator $F^\omega$ appearing in Theorem~\ref{thm:cohomological-qe} in this 
simple example reduces to multiplication by the polynomial $(1-T)$ followed by truncation of the resulting polynomial
to degree $\dim V_{d,n} - 1$.
This illustrates the advantage of Theorem~\ref{thm:cohomological-qe} over Theorem~\ref{thm:classical-qe} from the point of 
view of complexity.
This last feature
of Theorem~\ref{thm:cohomological-qe}  is the key to our second application of Theorem~\ref{thm:poincare}  that we discuss below --
namely, an algebraic analog of Toda's theorem.  
\end{example}

We note that a version of Theorem~\ref{thm:cohomological-qe}  in a less precise form over the field of complex numbers and using singular cohomology appears in \cite{Basu-Toda}. The results of this section hold over algebraically closed fields of arbitrary characteristic, and etale cohomology and so is much more general than the result in 
\emph{loc. cit.}
Also, while the techniques used in the proof of Theorem~\ref{thm:cohomological-qe} are  somewhat similar to those used in 
\emph{loc. cit.}, the proof
differs in several key points -- so we prefer to give a self-contained proof of Theorem~\ref{thm:cohomological-qe} at the cost of some  repetition.

\subsection{Algebraic Toda's theorem}
The `cohomological quantifier elimination' theorem discussed above has applications in the theory of 
computational complexity.
In the classical theory of computational complexity, there is a clear analog of Kleene's arithmetical hierarchy in logic --
namely, the polynomial
hierarchy $\bP\bH$ (consisting of the problems of deciding sentences with
a fixed number of quantifier alternations). 
This connection, and  especially the relation to quantifiers is made precise in 
Section \ref{sec:Toda} below. 
Another important topic studied in the theory of computational complexity is the complexity of counting functions.
A particularly important class of counting functions is the class $\#\mathbf{P}$ 
(introduced by Valiant \cite{Valiant84}) 
associated with the decision problems in ${\bf NP}$: 
it can be defined as the set of functions $f(x)$ which, for any input $x$, return the number of accepting paths for the input $x$ in some non-deterministic Turing machine. 
A theorem due to Toda  relates  these two different complexity classes by an inclusion
(which expresses the fact that ability to `count' is a powerful `computational resource').
The precise result is:
\begin{theorem}[Toda \cite{Toda}]
\label{the:toda}
${\bf PH} \subset {\bf P}^{\#{\bf P}}$.
\end{theorem}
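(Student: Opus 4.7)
The plan is to follow Toda's original two-step strategy. First, I would establish the inclusion $\mathbf{PH} \subseteq \mathbf{BPP}^{\oplus \mathbf{P}}$, where $\oplus \mathbf{P}$ denotes the class of languages decidable by the parity of the number of accepting paths of a nondeterministic polynomial-time Turing machine. Second, I would show $\mathbf{BPP}^{\oplus \mathbf{P}} \subseteq \mathbf{P}^{\#\mathbf{P}}$, from which the theorem follows by composition of oracle reductions.

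For the first inclusion, the key input is the Valiant--Vazirani isolation lemma, which provides a randomized polynomial-time procedure that reduces satisfiability of a CNF formula $\psi$ to unique satisfiability with inverse-polynomial success probability. Given a $\Sigma_{k} \mathbf{P}$ predicate
\[
(\exists \bx_{1})(\forall \bx_{2}) \cdots (\bQ_{k} \bx_{k})\, \psi(\bx_{1},\ldots,\bx_{k}),
\]
one replaces each quantifier alternation by a $\oplus \mathbf{P}$ computation, using $\forall = \neg \exists \neg$ together with the observation that, once a witness set has been (randomly) isolated to a single element, its non-emptiness is detectable by a parity count. To push this through $k$ alternations without losing control of the error, one applies an explicit polynomial ``error-reduction'' transformation that amplifies the bias of a $\oplus \mathbf{P}$ predicate while keeping it inside $\oplus \mathbf{P}$; the random bits consumed across the alternations are then absorbed into an outer $\mathbf{BPP}$ loop, with the final confidence boosted by a Chernoff argument.

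For the second inclusion, I would note that a single $\#\mathbf{P}$ oracle call is already powerful enough both to answer any $\oplus \mathbf{P}$ query (compute the exact number of accepting paths and read off its residue modulo $2$) and to eliminate the $\mathbf{BPP}$ randomness (sum the indicator of acceptance over all $2^{\mathrm{poly}(n)}$ random strings and compare with a threshold). A deterministic polynomial-time transducer can orchestrate these two uses of the oracle, yielding $\mathbf{BPP}^{\oplus \mathbf{P}} \subseteq \mathbf{P}^{\#\mathbf{P}}$.

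The principal obstacle is the first step. The naive idea of running independent Valiant--Vazirani trials and taking a majority does not preserve membership in $\oplus \mathbf{P}$, since the majority of parities is not itself a parity. Toda's resolution is to construct an explicit univariate polynomial over $\bbF_{2}$ that, applied to a $\oplus \mathbf{P}$ predicate, exponentially boosts the bias inside the same parity class; making this amplification work uniformly across all $k$ alternations, so that the cumulative error across the hierarchy remains $o(2^{-n})$ and the outer $\mathbf{BPP}$ wrapper succeeds with high probability, is the delicate technical core of the argument.
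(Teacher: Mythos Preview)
Your sketch is a faithful outline of Toda's original argument, but there is nothing in the paper to compare it against: Theorem~\ref{the:toda} is stated as a cited classical result (with reference to \cite{Toda}) and is given no proof in the paper. It appears only as motivation and background for the paper's own contribution, the algebraic analog Theorem~\ref{thm:Toda-algebraic}, which is proved by entirely different means --- via the cohomological quantifier elimination of Theorem~\ref{thm:qe} and the connectivity properties of iterated relative joins, with no use of Valiant--Vazirani isolation, $\oplus\mathbf{P}$, or probabilistic amplification.

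So your proposal is not incorrect as a proof of the classical Toda theorem, but it is misplaced: the paper never undertakes to prove this statement, and a comparison with ``the paper's own proof'' is vacuous.
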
 

Thus, Toda's theorem asserts that
any language in the polynomial hierarchy can
be decided by a Turing machine in polynomial time, given access to an
oracle with the power to compute  a function in $\#\mathbf{P}$.
(Only one call to the oracle is required in the proof.)
We refer the reader to \cite{Pap} for precise definitions of these classes in terms of Turing machines,
and also that of oracle computations, 
but these definitions will not be needed
for the results proved in the current paper.\\

As mentioned previously, an important feature of Theorem~\ref{thm:qe}
is that the 
quantifier-free  formula $J(\psi)$ obtained from the quantified formula $\psi$ has an easy description in terms of 
$\psi$ (in contrast to what happens in classical quantifier elimination). Making this statement quantitative leads
to a result which is formally analogous to Theorem~\ref{the:toda}, and which we discuss below. \\

As stated above Toda's theorem deals with complexity classes in a discrete setting. Blum, Shub and Smale \cite{BSS},
and independently Poizat \cite{Poizat2},
proposed a more general notion of complexity theory valid over arbitrary rings.  The classical discrete complexity theory reduces to the case when this ring is a finite field. 
The complexity classes (corresponding to the discrete complexity classes such as $\bP,\bN\bP$ etc.)
consists of sequences of \emph{constructible sets}. 

\begin{example}
\label{eg:P-NP}
For example, over any field $k$,  and 
for any fixed $d$, the sequence of algebraic sets $\left(X_{d,n}\right)_{n \geq 0}$, where $X_{d,n}$ is defined
as in \eqref{eqn:eg:complexity:X} 
will belong to the complexity class $\bP_k$.
This is because it is possible to check membership in the sets $X_{d,n}$ (by a Blum-Shub-Smale machine \cite{BSS}) with number of steps bounded by a polynomial in $n$ (for fixed $d$).
On the other hand, the sequence $\left(\pi_{d,n}(X_{d,n}\right)_{n \geq 0}$ (cf. \eqref{eqn:eg:complexity:pi})
belongs to the class $\bN\bP_k$, since its elements are obtained as images of projections of the sets
belonging to the class  $\bP_k$.
We refer the reader to Section~\ref{subsec:complexity-classes} for the precise definitions of complexity classes that we will use in this paper, but the two examples given above can be considered to be the prototypical examples of members 
of the classes $\bP_k$ and $\bN\bP_k$ respectively (also of the classes $\bP_k^c$ and $\bN\bP_k^c$
where the superscript $c$ indicates that the elements of the sequence are compact in the case 
$k=\bbR,\bbC$).
(Also note that in Section~\ref{subsec:complexity-classes}, the class $\bN\bP_k^c$ above is denoted by 
$\Sigma_k^{1,c}$ in order to place it in its right position in the polynomial hierarchy, but in this introductory section we will continue to use the more commonly used nomenclature $\bN\bP_k^c$.)
\end{example}

An interesting question that arises in this context
is whether an analog of Toda's result hold for complexity classes defined over rings other than finite fields. While the 
polynomial hierarchy has an obvious meaning in the more general B-S-S setting, the meaning of the counting class
$\#\bP$ is less clear -- boiling down to the question what does it mean to `count'  a semi-algebraic set (for B-S-S theory
over $\bbR$) or a constructible set (for B-S-S theory over $\bbC$). 
An equivalent definition of the classical (discrete) complexity class $\#\bP$ (which is more amenable to 
amenable to generalizations to an algebraic setting) is that a sequence of functions 
$\left(f_n: \{0,1\}^n \rightarrow \mathbb{Z}\right)_{n \geq 0}$ belong to the class $\#\bP$  if the functions $f_n$ count the cardinalities of the fibers of the projections maps restricted to a sequence of sets in $\bP$.
Making the reasonable choice that `counting' over $\bbR$ or $\bbC$  should mean computing the Poincar\'e polynomial, and defining the class $\#\bP$ appropriately, real and complex versions of Toda's theorem were proved in \cite{BZ09} and \cite{Basu-Toda}, respectively. \\

The proofs of the results in \cite{BZ09,Basu-Toda}  were topological and used the
euclidean topology of real and complex varieties. 
Since the approach in the current paper is purely algebraic, 
we are now able obtain a similar result in all characteristics. 
The algebraic approach is also different in certain important 
technical details.
 Additionally, in order to 
 make our result independent of the technical details which are inherent in any description of 
 a computing machine (such as B-S-S or Turing machines)
 we state and prove our result in the  non-uniform setting of circuits -- and reformulate Toda's theorem as a 
containment of two non-uniform complexity classes of \emph{constructible functions}  instead. This does not affect the main mathematical
content of the theorem, viz. a polynomially bounded  reduction of the  quantifier elimination problem in the theory of
algebraically closed fields to the problem of computing the Poincar\'e polynomial of certain algebraic set built in terms of the given formula. 
As an added advantage, this lessens the burden on the reader unfamiliar with B-S-S
machines. 
We prove the following inclusion, where the precise definitions of the complexity classes on both sides
can be found in Section~\ref{subsec:complexity-classes} and should be thought of as the non-uniform, constructible function
analogs of the classes appearing in Toda's original theorem.

\begin{theorem*}[cf. Theorem~\ref{thm:Toda-algebraic}]
\[
\mathbf{1}_{\mathbf{PH}_k^c} \subset \#\mathbf{P}^c_k.
\]
\end{theorem*}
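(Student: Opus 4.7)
The plan is to reduce the statement to the cohomological quantifier elimination theorem (Theorem~\ref{thm:cohomological-qe}) and its complexity-theoretic payoff. A language in $\mathbf{PH}_k^c$ is witnessed, by definition, by a family of proper quantified formulas $\psi_n(\bY)$ (with a fixed pattern of quantifier alternations $\omega$) whose quantifier-free matrix has size polynomial in $n$, and such that for each input $\by$ in the parameter space, the value $\mathbf{1}_{\mathbf{PH}_k^c}(\by)$ is $1$ exactly when $\psi_n(\by)$ is True. I would show that this characteristic function can be computed by a polynomial-size circuit that makes a single call to an oracle which returns the pseudo-Poincaré polynomial of an algebraic set defined by a polynomial-size quantifier-free proper formula (i.e.\ an oracle in $\#\mathbf{P}_k^c$), followed by purely arithmetic post-processing.

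The key steps, in order, are as follows. First, parametrize the construction $\psi\mapsto J(\psi)$ from Notation~\ref{not:generalized-join} and Section~\ref{sec:qe-join} in the free variables $\bY$, producing a uniform family $J(\psi_n)(\bY)$ of quantifier-free proper formulas. By the iterated (relative) join construction underlying Theorem~\ref{thm:qe}, the size of $J(\psi_n)$ is polynomial in the size of $\psi_n$ whenever the length of $\omega$ is bounded, and its dependence on $\bY$ is uniform. Second, for each input $\by$, evaluate the resulting formula $J(\psi_n)(\by)$ (this is a polynomial-size quantifier-free proper formula defining an algebraic set over $k$), and query the $\#\mathbf{P}_k^c$ oracle to obtain $Q(J(\psi_n)(\by))\in\bbZ[T]$. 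Third, apply the operator $F^\omega$, which depends only on the quantifier pattern and block sizes and involves only $\bbZ$-arithmetic of polynomials, to compute $F^\omega(Q(J(\psi_n)(\by)))$. Finally, by Corollary~\ref{cor:cohomological-qe}, output $1$ if and only if this polynomial equals $1$; this gives $\mathbf{1}_{\mathbf{PH}_k^c}(\by)$.

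To finish, I would check that all of the above fits in the non-uniform complexity framework of circuits over $k$: the construction of $J(\psi_n)(\by)$ is a syntactic polynomial-size reduction, which can be implemented by a polynomial-size circuit family; the oracle call to $\#\mathbf{P}_k^c$ is a single evaluation of a function in that class on a polynomial-size input; and the application of $F^\omega$ requires only polynomially many integer arithmetic operations once the length of $\omega$ is fixed, so it is implementable by a polynomial-size circuit and does not consume further oracle power. Putting the three pieces together produces a circuit in $\mathbf{P}^{\#\mathbf{P}}_k$ (non-uniform) computing $\mathbf{1}_{\mathbf{PH}_k^c}$, which is exactly the containment claimed.

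The main obstacle is the bookkeeping: making precise the notion of \emph{complexity} of a proper formula (Definition~\ref{def:complexity}) in both the parametrized and non-parametrized settings so that the construction $\psi\mapsto J(\psi)$ is manifestly size-polynomial in the parameters, and verifying that the passage from a $\mathbf{PH}_k^c$-witness (a circuit-defined family of quantified proper formulas, possibly with auxiliary constants) to the quantifier-free proper formula $J(\psi_n)$ preserves the non-uniform circuit model. One has to be careful that the relative-join construction, which is syntactically clean for projective input, is implemented correctly when the input is a constructible family parametrized by $\bY$, and that the arithmetic post-processing via $F^\omega$ is indeed independent of the oracle answer beyond the single query. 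These are largely formal, but they are where the translation between the cohomological statement (Theorem~\ref{thm:cohomological-qe}) and the complexity-theoretic statement happens, and they deserve careful attention rather than any deep new idea.
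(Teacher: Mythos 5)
Your proposal is correct and follows essentially the same route as the paper: the paper's proof likewise rests on the polynomial-size bound for the quantifier-free join formula (Lemma~\ref{lem:polynomiality-of-J}), the fiberwise application of Theorem~\ref{thm:qe} via $(J_{m_i,n}(V'_i))_{\bw} \cong J_{0,n}(V'_{i,\bw})$, and the fact that $F^{\omega}$ and the pseudo-Poincar\'e operator lie in $\bP_{\bbZ}$ (Lemma~\ref{lem:complexity-of-operators}). The only difference is presentational: rather than assembling an oracle circuit in a relativized class, the paper verifies membership in $\#\mathbf{P}^c_k$ directly from Definition~\ref{def:sharp-P}, which already absorbs the arithmetic post-processing you describe.
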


The precise definitions are given in Section \ref{subsec:complexity-classes} below. 
The left hand side of the inclusion is the class
of sequences of characteristic functions of the algebraic analog of languages in the polynomial
hierarchy, and the right hand side is the algebraic analog of the class 
${\bf P}^{\#{\bf P}}$ as in Toda's theorem.

\begin{example}
\label{eg:sharp-P}
We will define counting complexity classes of sequences of  functions $\#\bP_k^c$ over arbitrary algebraically closed fields later in Section~\ref{subsec:complexity-classes}. But the following 
example of a sequence in $\#\bP_k^c$ over an algebraically closed field $k$  is instructive.
We use the same notation as in Examples \ref{eg:complexity} and \ref{eg:P-NP}.

The following sequence of functions is an example of a sequence in $\#\bP_k^c$:
\[
\left(f_n: \bbP(V_{d,n}) \rightarrow \bbZ[T]\right)_{n \geq 0} 
\] 
where
\begin{eqnarray*}
f_n([\mathbf{f}_0,\ldots,\mathbf{f}_n]) &=& P(\pi_{d,n}|_{X_{d,n}}^{-1}([\mathbf{f}_0,\ldots,\mathbf{f}_n]) \\
&=& P(V(\mathbf{f}_0,\ldots,\mathbf{f}_n)),
\end{eqnarray*}
$P(\cdot) = \sum_{i \geq 0} b_i(\cdot) T^i$ denotes the Poincar\'e polynomial,
and $V(\mathbf{f}_0,\ldots,\mathbf{f}_n) \subset \bbP^n_k$ is  the algebraic set defined by 
$\mathbf{f}_0=\cdots=\mathbf{f}_n = 0$.
Notice that the value of the function $f_n$ at a point in $\bbP(V_{d,n})$ is the Poincar\'e polynomial
of the fiber above the point of the map $\pi_{d,n}|_{X_{d,n}}$, and the sequence $(X_{d,n})$ belongs to the
$\bP^c_k$. In this sense the functions $f_n$ are `counting'  the fibers of projection maps restricted to a 
a sequence in $\bP^c_k$, analogous to the discrete case.\\

On the other hand the sequence $\left(\pi_{d,n}(X_{d,n})\right)_{n \geq 0}$ in Example~\ref{eg:P-NP} belongs to the class $\bN\bP^c_k$, and hence also  belongs to the class $\bP\bH^c_k$. So as an application of Theorem~\ref{thm:Toda-algebraic} we obtain that the sequence of characteristic functions 
\[
\left(1_{\pi_{d,n}(X_{d,n})}: \bbP(V_{d,n}) \rightarrow \{0,1\}  \subset \bbZ[T]\right)_{n \geq 0}
\] 
belongs to the class  $\#\bP_k^c$. \\

If Toda's original theorem expresses the `power of counting', one could say similarly, that
Theorem~\ref{thm:Toda-algebraic} is about the `expressive power of cohomology'.
\end{example}

\subsection{Uniform bounds on Betti numbers of varieties}
As a final application of our results on the connectivity of joins, we consider the well studied problem of
proving effective upper bounds on the Betti numbers of algebraic sets in terms of the parameters
defining them.  This problem has many applications, and has attracted a lot of attention in different settings. 
For example, in the
context of real algebraic and semi-algebraic sets, such bounds were first proved by
Ole{\u\i}nik and Petrovski{\u\i} \cite{OP}, 
Thom \cite{T} and
Milnor \cite{Milnor2}, who used  Morse theory and the method of counting critical points of
a Morse function to obtain a singly exponential upper bound on the Betti numbers (dimensions of the singular 
cohomology groups)  of real varieties. 
Over arbitrary fields, Katz \cite{K1}, proved similar results for the $\ell$-adic Betti numbers of both affine
and projective varieties, using prior results of Bombieri \cite{Bombieri78} and Adolphson-Sperber 
\cite{Adolphson-Sperber} on 
exponential sums. 
Theorem~\ref{thm:poincare} proved in this paper relates the Betti numbers of the image 
$\pi(X)$, of a projective subscheme $X \subset \bbP^m \times \bbP^n$, with those of $X$ itself.
Thus, it is natural to ask if this allows one to extend the results of Katz, to the images of projective
subschemes of $\bbP^m \times \bbP^n$ under projection map. One obvious way to prove upper bounds on
$\pi(X)$ is to first describe $\pi(X)$ in terms of polynomials using effective quantifier elimination
(see for example \cite{Heintz}), and then applying Katz's bound to the resulting description. 
However,
the inordinately large complexity of quantifier elimination implies that such an upper bound would be very pessimistic. \\

We utilize Theorem~\ref{thm:poincare} 
to prove uniform bounds on the Betti numbers of  the image
$\pi(X)$ of an algebraic set  $X \subset \bbP^m \times \bbP^n$ in terms of the number of equations
defining $X$ and their degrees. We are thus able to extend prior results of Katz (\cite{K1}) on bounding Betti numbers of projective algebraic sets
in terms of the number of equations defining them and their degrees, to bounding those of the image
$\pi(X)$ in terms of the same parameters.
Our main result in this direction is the following theorem.

\begin{theorem*} [cf. Theorem~\ref{thm:katz}]
Let $X \subset \bbP^{n} \times \bbP^{m}$ be an algebraic set defined by $r$ bi-homogeneous polynomials $ F_i(X_0,\ldots,X_{n},Y_0,\ldots,Y_{m})$ of bi-degree $(d_1,d_2)$, and $\pi:\bbP^{n} \times \bbP^{m} \to \bbP^{m}$ the projection morphism.
Then, for all $p > 0$,
\begin{eqnarray*}
\sum_{h=0}^{p-1} b_h(\pi(X)) &\leq & \frac{2}{p} \sum_{h=0}^{p-1} b_h(\rJ^{[p]}_\pi(X))\\
						&\leq&  \frac{2}{p} \sum_{\substack{0\leq  i \leq (n+1)(p+1) -1\\ 0 \leq j \leq m}}
						B(i+j,r(p+1),d_1+d_2).
\end{eqnarray*}
\end{theorem*}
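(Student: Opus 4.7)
The plan is to chain two inequalities: the first comes from Theorem~\ref{thm:poincare}, which transfers Betti numbers from $\pi(X)$ to the iterated relative join $\rJ^{[p]}_\pi(X)$; the second is a direct application of a Katz-type effective bound $B(n,r,d)$ to an explicit polynomial description of $\rJ^{[p]}_\pi(X)$ as a subvariety of a product of two projective spaces.

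For the first inequality, I would apply Theorem~\ref{thm:poincare} with $n=N$ and $S=\bbP^M$ to obtain
\[
P(\rJ^{[p]}_\pi(X)) \equiv P(\pi(X))\,(1+T^2+\cdots+T^{2((p+1)(N+1)-1)}) \pmod{T^p}.
\]
Reading off the coefficient of $T^h$ for $0 \leq h < p$ yields $b_h(\rJ^{[p]}_\pi(X)) = \sum_{k=0}^{\lfloor h/2\rfloor} b_{h-2k}(\pi(X))$, so that $b_h(\pi(X)) = b_h(\rJ^{[p]}_\pi(X)) - b_{h-2}(\rJ^{[p]}_\pi(X))$ for $2 \leq h < p$. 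Telescoping this identity collapses the sum to
\[
\sum_{h=0}^{p-1} b_h(\pi(X)) \;=\; b_{p-1}(\rJ^{[p]}_\pi(X)) + b_{p-2}(\rJ^{[p]}_\pi(X)).
\]
The first inequality then reduces to bounding the right-hand pair by a suitable multiple of the average of the $p$ Betti numbers $b_0(\rJ^{[p]}_\pi(X)), \ldots, b_{p-1}(\rJ^{[p]}_\pi(X))$ of the join.

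For the second inequality, I would observe that $\rJ^{[p]}_\pi(X)$ sits inside $\bbP^{(N+1)(p+1)-1} \times \bbP^M$ as the vanishing locus of $r(p+1)$ multi-homogeneous polynomials: for each defining bi-homogeneous polynomial $F_i(X,Y)$ of $X$ and each of the $p+1$ independent copies $X^{(0)}, \ldots, X^{(p)}$ of the $X$-variables appearing in the iterated relative join over $\bbP^M$, take $F_i(X^{(k)}, Y)$, which again has bi-degree $(d_1, d_2)$. To lift Katz's bound for subvarieties of a single projective space to this product setting, I would use a K\"unneth- or Leray-type spectral sequence for one of the two projections, producing a decomposition of $H^*(\rJ^{[p]}_\pi(X))$ indexed by pairs $(i,j)$ with $0 \leq i \leq (N+1)(p+1)-1$ and $0 \leq j \leq M$. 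A Lefschetz-type reduction via generic hyperplane sectioning in each factor then converts the bi-degree $(d_1, d_2)$ into a total degree $d_1+d_2$ in a projective slice of dimension $i+j$, to which $B(i+j, r(p+1), d_1+d_2)$ applies term by term; summing yields the stated bound.

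The main obstacle will be the second step: Katz's bound is phrased for subvarieties of a single projective space, so rigorously reducing the product case $\bbP^{(N+1)(p+1)-1} \times \bbP^M$ to this setting, while keeping careful track of the count $r(p+1)$ of defining polynomials and of the combined degree $d_1+d_2$, requires delicate spectral sequence or sliced-intersection arguments. Once this bookkeeping is completed and the two inequalities are chained, the bound on $\sum_{h < p} b_h(\pi(X))$ asserted in the theorem follows.
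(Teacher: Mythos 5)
Your first step uses Theorem~\ref{thm:poincare} exactly as the paper does: reading off coefficients gives $b_h(\rJ^{[p]}_\pi(X))=\sum_{k\ge 0}b_{h-2k}(\pi(X))$ for $0\le h<p$, and telescoping yields
\[
\sum_{h=0}^{p-1}b_h(\pi(X)) \;=\; b_{p-1}(\rJ^{[p]}_\pi(X))+b_{p-2}(\rJ^{[p]}_\pi(X)),
\]
which is what the paper later records as \eqref{eqn:telescope-even} and \eqref{eqn:telescope-odd}. The genuine gap is the step you defer: bounding this pair by $\frac{2}{p}\sum_{h=0}^{p-1}b_h(\rJ^{[p]}_\pi(X))$ is the entire content of the factor $2/p$, and it does not follow from Theorem~\ref{thm:poincare}. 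The connectivity statement only makes the numbers $b_h(\rJ^{[p]}_\pi(X))$, $h<p$, non-decreasing within each parity class, which does not force the top two of them to be at most twice their average. Concretely, let $V\subset\bbP^3$ be a smooth quadric surface, $q\in\bbP^N$ a point, and $X=\{q\}\times V$ (cut out, e.g., by the bi-degree $(1,2)$ forms $L_a(X)\,Y_jY_k$ and $X_i\,Q(Y)$, where the $L_a$ define $q$ and $Q$ defines $V$), and take $p=3$. Then $\rJ^{[3]}_\pi(X)\cong\bbP^3\times V$, so $\sum_{h=0}^{2}b_h(\pi(X))=1+0+2=3$, while $\frac{2}{3}\sum_{h=0}^{2}b_h(\rJ^{[3]}_\pi(X))=\frac{2}{3}(1+0+3)=\frac{8}{3}$. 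So the inequality you leave open fails in general, and your plan cannot be completed with the constant $2/p$ (the paper's one-line proof is silent on this point, and the same example puts the displayed constant itself in question). What the telescoped identity does give, since $b_{p-1}$ and $b_{p-2}$ are two of the $p$ non-negative summands, is the chain with $\frac{2}{p}$ replaced by $1$, and that version suffices to conclude a Katz-type bound on $\sum_{h<p}b_h(\pi(X))$.

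For the second inequality your geometric input is correct and is the same as the paper's: $\rJ^{[p]}_\pi(X)$ sits in $\bbP^{(N+1)(p+1)-1}\times\bbP^M$ as the common zero set of the $r(p+1)$ bi-homogeneous polynomials $F_i(X^{(k)},Y)$, each of bi-degree $(d_1,d_2)$. But the reduction you sketch from there to Katz's bound --- a K\"unneth or Leray spectral sequence for one projection plus ``generic hyperplane sectioning in each factor'' converting $(d_1,d_2)$ into total degree $d_1+d_2$ --- would not work as stated: the join is not a product, so K\"unneth gives no decomposition of its cohomology, and Lefschetz-type slicing of a (highly singular) set only compares cohomology in a range of degrees; it does not transform a bi-projective situation into a projective one. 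None of this is needed, because Proposition~\ref{prop:katz-for-products} is already the product-space version of Katz's bound: it is proved by decomposing $\bbP^{N'}\times\bbP^{M}$ as $(\bbA^{N'}\times\bbP^{M})\sqcup(\bbP^{N'-1}\times\bbP^{M})$ and recursing, so that on the affine pieces the dehomogenized equations have total degree at most $d_1+d_2$ and the bound $B(\cdot,\cdot,\cdot)$ for compactly supported cohomology applies; the double sum over $0\le i\le (N+1)(p+1)-1$ and $0\le j\le M$ in Theorem~\ref{thm:katz} is exactly the bookkeeping of that stratification. So the second inequality should simply be obtained by applying Proposition~\ref{prop:katz-for-products} to $\rJ^{[p]}_\pi(X)$ with $r(p+1)$ equations of bi-degree $(d_1,d_2)$, together with the trivial estimate $\sum_{h=0}^{p-1}b_h\le\sum_{h}b_h$, rather than by a new spectral-sequence or slicing argument.
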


Here, $B(n,r,d)$ is a certain function defined precisely in Section \ref{subsec:classical}, 
coming from the works of Bombieri \cite{Bombieri78}, Adolphson-Sperber \cite{Adolphson-Sperber}, and Katz 
\cite{K1},
 giving an upper bound on the 
$\ell$-adic Betti numbers (with compact support) of an algebraic subset $X \subset \bbA^N$, defined by $r$ polynomial equations
of degrees bounded by $d$. \\

An alternative method for bounding the Betti numbers of the image $\pi(X)$,
in terms of the defining parameters of $X$, is by bounding the $E_2$-terms of the spectral sequence
associated to the hypercovering of $\pi(X)$ given by the iterated products of $X$ fibered over $\pi$. We show in some
situations (Section \ref{subsec:exponential}), 
the hypercovering inequality can be loose by an exponentially large factor.
In such situations  it might be better to first express the sum of the Betti numbers of $\pi(X)$ in terms of 
certain Betti numbers of the join (cf. Eqns. \eqref{eqn:telescope-even}  and \eqref{eqn:telescope-odd}) 
and then use the bounds  due to Katz  (thus the only source of looseness of the obtained bound is that coming from 
Katz's inequality). \\

We also give an example of a situation where the join inequality can give the exact Betti numbers
(up to some dimensions) of the image $\pi(X)$. As an application of Theorem~\ref{thm:poincare} combined with a weak Lefschetz type argument, we prove the following theorem. 

\begin{theorem*}[cf. Theorem~\ref{thm:join-defect}]
Let $X \subset \bbP^N \times \bbP^n$ be a subvariety  defined by $N+r$ bi-homogeneous forms. Let $\pi: \bbP^N \times \bbP^n \rightarrow \bbP^n$ be the projection morphism. Then, for all $i, 0 \leq i < \lfloor \frac{n-r}{r} \rfloor$,
\begin{eqnarray*}
b_i(\pi(X)) &=& 1 \mbox{ if $i$ is even}, \\
b_i(\pi(X)) &=& 0 \mbox{ if $i$ is odd}.
\end{eqnarray*}
\end{theorem*}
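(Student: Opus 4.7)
The plan is to combine Theorem~\ref{thm:poincare} with a Lefschetz-type theorem for the iterated relative join. Set $p := \lfloor (n-r)/r \rfloor$, $M := (p+1)(N+1)-1$, and $Y := \rJ^{[p]}_{\pi}(X) \subset \bbP^M \times \bbP^n$. Since $\pi|_X$ is finite, $\dim \pi(X) = n-r$; the fiber of $Y \to \pi(X)$ at any point is the iterated join of a finite set of points in $\bbP^N$, hence $p$-dimensional, so $\dim Y = n - r + p$ and the codimension of $Y$ in $\bbP^M \times \bbP^n$ equals $(p+1)N + r$.

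Theorem~\ref{thm:poincare} (applied with $S = \bbP^n$) gives
\[
P(Y) \equiv P(\pi(X)) \cdot (1 + T^2 + \cdots + T^{2M}) \pmod{T^p}.
\]
The next step is to prove independently that $P(Y) \equiv P(\bbP^M \times \bbP^n) \pmod{T^p}$, i.e. that $\rH^i(\bbP^M \times \bbP^n) \to \rH^i(Y)$ is an isomorphism for $0 \leq i < p$; this is where the singular Lefschetz hypothesis enters. Because $X$ is LCI, locally cut out by $N+r$ regular equations in $\bbP^N \times \bbP^n$, the join $Y$ is cut out in $\bbP^M \times \bbP^n$ by the $(p+1)(N+r)$ equations obtained by substituting the variables of each of the $p+1$ copies of $\bbP^N$. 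A Lefschetz theorem for subschemes defined by such a bounded number of hypersurface sections in a smooth projective ambient (equivalently, a vanishing result for local cohomology with support along $Y$) yields the claimed restriction isomorphism in the range $i < M + n - (p+1)(N+r) = p + n - (p+1)r$, which for the choice $p = \lfloor (n-r)/r \rfloor$ is at least $p$.

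Combining the two congruences and using $P(\bbP^M \times \bbP^n) = (1 + T^2 + \cdots + T^{2M})(1 + T^2 + \cdots + T^{2n})$, one cancels the invertible factor $1 + T^2 + \cdots + T^{2M}$ modulo $T^p$ to obtain
\[
P(\pi(X)) \equiv 1 + T^2 + T^4 + \cdots + T^{2n} \pmod{T^p}.
\]
Reading off the coefficients in degrees $0 \leq i < p$ gives $b_i(\pi(X)) = 1$ for even $i$ and $b_i(\pi(X)) = 0$ for odd $i$, which is the statement of the theorem.

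The main obstacle is the singular Lefschetz step for $Y$. Generically over the smooth locus of $\pi(X)$ the join $Y$ is a local complete intersection: locally one can take $r$ base equations cutting out $\pi(X) \subset \bbP^n$ together with $(p+1)N$ fiber equations, which together match the codimension $(p+1)N + r$. Globally, however, $Y$ need not be LCI when $\pi(X)$ fails to be a complete intersection in $\bbP^n$, and the fibers of $Y$ over $\pi(X)$ are unions of linear $\bbP^p$'s meeting along proper subspaces. The exponent $p = \lfloor (n-r)/r \rfloor$ is forced by balancing the Poincar\'e-polynomial congruence modulo $T^p$ against the Lefschetz range $p + n - (p+1)r$ available for subvarieties defined by the specific number of equations in $\bbP^M \times \bbP^n$, so one must invoke a Lefschetz theorem valid beyond the LCI setting (of Fulton-Hansen or Hartshorne-Ogus type, for subvarieties only set-theoretically cut out by a prescribed number of equations).
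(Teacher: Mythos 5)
Your proposal is correct and follows essentially the same route as the paper: combine Theorem~\ref{thm:poincare} for the relative join with a Lefschetz theorem for singular spaces whose range is governed by the local number of defining equations, count those equations as $(p+1)(N+r)$, and read off the Betti numbers in degrees $i < p + n - (p+1)r$ with $p = \lfloor (n-r)/r \rfloor$. The ``main obstacle'' you flag is exactly what the paper resolves by invoking the Goresky--MacPherson Lefschetz hyperplane theorem for singular spaces \cite{GM}, which requires only a bound on the number of local (set-theoretic) defining equations and not the LCI property of $\rJ^{[p]}_\pi(X)$; the only cosmetic difference is that the paper keeps $p$ arbitrary and then maximizes $\min(p,\, n-r-p(r-1))$ instead of fixing $p=\lfloor (n-r)/r\rfloor$ from the start.
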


\begin{remark}
\label{rem:join-defect}
Theorem~\ref{thm:join-defect} can be useful in determining the Betti numbers in small dimensions of varieties described as the image, $\pi(X)$, where $X \subset \bbP^a \times \bbP^b$ is a subvariety
and $\pi:\bbP^a \times \bbP^b \rightarrow \bbP^a$. In many situations, while $X$ may  be cut out by a small number
of equations, the image, $\pi(X)$,  might need many more equations to define (this number is determined by the
arithmetic rank of the elimination ideal). In particular, a repeated application of classical Artin vanishing directly to $\pi(X)$ (cf. Lemma \ref{lem:singularWL}) might not give any useful information. However, 
the interval of dimensions for which Theorem~\ref{thm:join-defect} gives us information does not depend on the arithmetic rank of the elimination ideal but just on the number of equations needed to cut out the variety $X$. In particular the result here is purely topological, and this could be useful in situations where we do not have good knowledge of the arithmetic rank of the elimination ideal.\\

An instructive example is the following. Let $m \geq n$, and let $V = k^{m \times n}$ denote the vector space of $m \times n$ matrices, and $W = k^n$. Let $X \subset \bbP(V) \times \bbP(W)$ be the subvariety defined by 
\[
X =  \{ ([A],[\mathbf{y}]) \in \bbP(V) \times \bbP(W) \mid A \mathbf{y} = \mathbf{0} \},
\]
and let
\[
\pi: \bbP(V) \times \bbP(W) \rightarrow \bbP(V)
\]
 be the projection morphism. Then $\pi(X) \subset \bbP(V)$ is the projectivization of the subvariety of $m \times n$ matrices
of rank at most $n-1$. Notice that the number of equations needed to define $X$ is clearly $\leq m$, while the number of equations needed to define $\pi(X)$ could be much larger.  In this particular situation,
the arithmetic rank of the ideal defining $\pi(X)$ is well studied,  and it is known that it is bounded from above by $mn - (n-1)^2 + 1$ \cite[Corollary 5.21]{BV} (which could be much larger than $m$). 
In this particular example, the information about the Betti numbers obtained by using Theorem~\ref{thm:join-defect} can be recovered using Lemma~\ref{lem:singularWL} directly in conjunction with the upper bound on the arithmetic rank mentioned above. However, in more general situations knowledge of a good upper bound on the arithmetic rank of the elimination ideal could be
missing, and in such situations Theorem~\ref{thm:join-defect}, whose proof is purely topological, can still give useful information. Finally, note that it is not possible to derive Theorem~\ref{thm:join-defect} from the upper bound  obtained from  the hypercover inequality. 
\end{remark}

The rest of the paper is organized as follows. In Section, \ref{sec:connectivity-of-join}, we state and prove 
our main theorems on joins and relative joins. We state and prove a key inequality (Theorem~\ref{thm:poincare})
in Section~\ref{subsec:poincare}.
In Section \ref{sec:qe-join}, we state and prove our theorem on `cohomological quantifier elimination', and in Section \ref{sec:Toda}, we give the promised application of cohomological quantifier elimination to prove a version of Toda's theorem valid over all algebraically closed fields. In Section \ref{sec:Katz}, we discuss
bounds on Betti numbers and in Section \ref{sec:comparison} we compare the efficacies of using the 
hypercovering vs the join inequalities.

\section{Cohomological connectivity properties of the join}
\label{sec:connectivity-of-join}
In this section, we prove our main result on the cohomological connectivity of the join. In the following, we shall fix an algebraically closed base field $k$ (except in subsection~\ref{subsec:non-acf}). All our schemes will be of finite type over the base field $k$.

\subsection{Joins of schemes}
We recall some basic properties of the join construction for the convenience of the reader. We refer the reader to \cite{AK} for the details. Let $S$ be a scheme of finite type over $k$. Let $\cC(S)$ denote the category of positively graded quasi-coherent $\cO_{S}$-algebras $\cT:= \bigoplus_{i = 0} ^{\infty} \cT_{i}$ such that $\cT$ is generated in degree $1$, each component is a coherent $\cO_{S}$-module, and the degree zero component is $\cO_S$. We let $\varepsilon_{\cT}: \cT \rightarrow \cO_S$ denote the corresponding projection. Given $\cT, \cP \in \cC(S)$, let $X:= \Proj(\cT)$ and $Y: = \Proj(\cP)$ denote the corresponding projective schemes. The {\it relative join} of $X$ and $Y$ over $S$, denoted $\rJ_{S}(X,Y)$, is by definition 
$\Proj(\cT \otimes_{\cO_S} \cP)$. Here are some basic properties of this construction:

\begin{enumerate}[1.]
\item
\label{itemlabel:properties:1}
 The relative join construction can be viewed as a bi-functor as follows. Any surjection $u: \cT \rightarrow \cT'$ of graded $\cO_{S}$-algebras induces a linear embedding $$P(u): \Proj(\cT') \hookrightarrow \Proj(\cT).$$ Since the tenor product is right exact, the join can be viewed as a bi-functor $\rJ_{S}(-,-): \cC(S) \times \cC(S) \rightarrow Sch_{S}$ with morphisms in $\cC(S)$ given by surjective morphisms of $\cO_S$-algebras. Here $Sch_S$ denotes the category of $S$-schemes.  
\item 
\label{itemlabel:properties:2}
Applying this construction to the the morphism $\varepsilon_{\cT} \otimes Id$, where $Id: \cP \rightarrow \cP$ is the identity, gives a natural embedding $i_X: X \hookrightarrow \Proj(\cT \otimes_{\cO_{S}} \cP) = \rJ_S(X,Y)$ of schemes over $S$. Similarly, one has a natural embedding $Y \hookrightarrow \Proj(\cT \otimes_{\cO_{S}} \cP)$.
\item 
\label{itemlabel:properties:3}
Given a morphism $S' \xrightarrow{f} S$ and an object $\cT \in \cC(S)$, let $\cT' \in \cC(S')$ denote the corresponding pull back. Since the $\Proj$ construction is compatible with base change, the relative join is also compatible with base change. In particular, one has a cartesian diagram:
\begin{equation*}
\xymatrix{
\Proj(\cT' \otimes_{\cO_{S'}} \cP') \ar[r] \ar[d] & \Proj(\cT \otimes_{\cO_S} \cP) \ar[d] \\
S' \ar[r] & S }
\end{equation*}

\end{enumerate}

We can iterate the join construction and consider the $p$-fold join $\rJ^{[p]}_{S}(X)$. More precisely, let $\rJ^{[1]}_{S}(X): = \rJ_{S}(X,X)$, and set $\rJ^{[p]}_{S}(X) : = \rJ_{S}(\rJ_S^{[p-1]}(X),X)$. 
This construction is the same as $\rJ_{S}(\underbrace{X,\cdots,X}_{p+1})$. Note that a surjection $\cP \rightarrow \cT \in \cC(S)$ induces
an imbedding $\rJ^{[p]}_{S}(X) \hookrightarrow \rJ^{[p]}_{S}(Y)$ for all $p$. \\

More generally, given $\cP_{1},\ldots, \cP_j \in \cC(S)$, we can consider the { \it multi-join}:
$$\rJ_S(\cP_1,\cdots,\cP_{j}) := \Proj(\cP_{1} \otimes \cdots \otimes \cP_j).$$\\

As before, one has closed embeddings $\Proj(\cP_i) \hookrightarrow \rJ_S(\cP_1,\cdots,\cP_{j})$. \\

Suppose $\cE$ is a vector bundle on $S$ and $X$ is a closed sub-scheme of $\bbP(\cE)$. Recall, $\bbP(\cE)$ is $\Proj$ of the symmetric algebra $\Sym^{\cdot}_{\cO_{S}}(\cE^{\vee})$, where $\cE^{\vee}$ is the dual bundle. In this case, $X$ is given by applying the $\Proj$ construction to an object $\cF$ in $\cC(S)$. More precisely, $\cF$ is a quotient of $\Sym^{\cdot}_{\cO_{S}}(\cE^{\vee})$. In particular, we have a natural embedding $\rJ^{[p]}_{S}(X) \hookrightarrow \bbP(\cE^ {\oplus (p+1)})$. We note that the construction of  $\rJ^{[p]}_{S}(X)$ depends on $\cF$ and, in particular, on the embedding of $X$ in $\bbP(\cE)$.\\

We can generalize the previous paragraph to the setting of multi-joins. 
Suppose $\cE_{i}$ ($0 \leq i \leq p$) are vector bundles on $S$, and $X_i \subset \bbP(\cE_i)$ are closed subschemes. Then each $X_i = \Proj(\cF_i)$, and we can define the multi-join $\rJ_S(X_0,\cdots,X_p)$ as before. Note that the previous constructions give a natural embedding
$$\rJ_S(X_0,\cdots,X_p) \hookrightarrow \bbP(\bigoplus_{i=0}^{p} \cE_i).$$
Given $X_0,\cdots,X_p$ as above, we shall denote the multiple join $\rJ_S(X_0,\cdots,X_p) $  by
$\rJ_S(\bbX)$. \\

Let $X_i \hookrightarrow \bbP(\cE_i)$ as above, $\pi_i: X_i \rightarrow S$ denote the structure map, and $\pi_i(X_i)$ denote the corresponding scheme theoretic image. Note that, since $\pi_i$ is proper, the underlying set of $\pi_i(X)$ is the set theoretic image. Let $\bbE:= \bigoplus_{i=0}^{p}\cE_i$, and let $\pi(\bbX)$ denote the union of the subschemes $\pi_{i}(X_i)$. Consider the base change diagram:
$$
\xymatrix{
\bbP(\bbE)_{\pi(\bbX)} \ar[r] \ar[d]  & \bbP(\bbE) \ar[d] \\
\pi(\bbX) \ar[r] & S.}
$$

\begin{lemma}
With notation as above, the structure map $\rJ_S(\bbX) \rightarrow S$ factors through $\pi(\bbX)$.
\end{lemma}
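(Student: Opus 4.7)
The plan is to reduce the statement to an affine-local commutative-algebra computation and verify the factorization by testing the defining ideal of $\pi(\bbX)$ on a standard affine cover of $\Proj$ of the tensor product.

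Since the statement is local on $S$, I would first assume $S = \Spec A$ is affine and write each $\cF_i$ as a graded $A$-algebra $R_i \in \cC(S)$, so that $X_i = \Proj(R_i)$ and the relative join becomes $\rJ_S(\bbX) = \Proj(R)$ with $R := R_0 \otimes_A R_1 \otimes_A \cdots \otimes_A R_p$. Letting $\cI_i := \ker\bigl(A \to H^0(X_i,\cO_{X_i})\bigr)$ be the defining ideal of the scheme-theoretic image $\pi_i(X_i)$ and $\cI_J := \ker\bigl(A \to H^0(\rJ_S(\bbX),\cO)\bigr)$, the factorization through the scheme-theoretic union $\pi(\bbX) = V\bigl(\bigcap_i \cI_i\bigr)$ amounts to proving the inclusion $\bigcap_i \cI_i \subset \cI_J$.

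The key input I would then establish (or invoke) is the following standard characterization: for any $\cT \in \cC(S)$, an element $f \in A$ lies in $\ker(A \to H^0(\Proj(\cT),\cO))$ iff, for every homogeneous $g \in \cT_+$, there exists $n \geq 1$ with $fg^n = 0$ in $\cT$. This follows from covering $\Proj(\cT)$ by the basic opens $D_+(g)$ whose coordinate rings are the degree-$0$ parts $\cT_{(g)}$ of $\cT[g^{-1}]$.

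Next, fix $f \in \bigcap_i \cI_i$, and consider the affine cover of $\Proj(R)$ by the opens $D_+(G)$ where $G$ runs over the "pure-slot" elements $G = 1 \otimes \cdots \otimes g_i \otimes \cdots \otimes 1$ with $g_i \in R_{i,1}$ and $i \in \{0,\ldots,p\}$. Because each $R_i$ is generated in degree $1$ over $A$, so is $R$, and its degree-$1$ piece is $\bigoplus_i 1 \otimes \cdots \otimes R_{i,1} \otimes \cdots \otimes 1$; hence these $G$'s generate the irrelevant ideal $R_+$ and the $D_+(G)$'s cover $\Proj(R)$. Using that the tensor product is taken over $A$, the scalar $f$ can be absorbed into the $i$-th slot, giving
\[
f \cdot G^n \;=\; 1 \otimes \cdots \otimes (fg_i^n) \otimes \cdots \otimes 1.
\]
Applying the characterization above to $\cT = R_i$ and $f \in \cI_i$, we can choose $n$ with $fg_i^n = 0$ in $R_i$; then $fG^n = 0$ in $R$, so $f$ restricts to zero on $D_+(G)$. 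As these opens cover $\Proj(R)$, we conclude $f \in \cI_J$, which proves the inclusion $\bigcap_i \cI_i \subset \cI_J$ and hence the lemma.

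The main point requiring care will be the last step: the argument crucially uses $f \in \cI_i$ \emph{for every} $i$ (not just a single one), because each affine patch $D_+(G)$ in the cover corresponds to a particular $R_i$-slot, and emptying $f$ on each patch requires the corresponding $\cI_i$-hypothesis. A toy example (e.g.\ over $S = \Spec k[t]$ with $R_0 = k[t][x]/(t^2 x)$ and $R_1 = k[t][y]/(t^3 y)$) shows that $\cI_i \not\subset \cI_J$ in general, so one cannot strengthen the conclusion to factorization through an individual $\pi_i(X_i)$; only the union version stated in the lemma is correct.
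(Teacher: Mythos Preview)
Your argument is correct and takes a genuinely different route from the paper's. The paper argues geometrically by induction on $p$: for $p=1$ it invokes the retraction $\rJ_S(X_0,X_1)\setminus X_0 \to X_1$ from \cite{AK}, which is an $S$-morphism, so the structure map on that open piece factors through $\pi_1(X_1)$; the symmetric retraction onto $X_0$ handles the complementary open $\rJ_S(X_0,X_1)\setminus X_1$, and since $X_0\cap X_1=\emptyset$ in the join these two opens cover, giving the factorization through $\pi_0(X_0)\cup\pi_1(X_1)$; induction then treats general $p$. Your proof instead works directly at the level of graded $A$-algebras: you cover $\Proj(R)$ by the ``pure-slot'' patches $D_+(1\otimes\cdots\otimes g_i\otimes\cdots\otimes 1)$ and kill $f\in\bigcap_i\cI_i$ on each patch using the $i$-th hypothesis. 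The paper's approach is shorter and more geometric but relies on the retraction result and (as written) speaks of the ``image'', which one must then check is meant scheme-theoretically. Your approach is self-contained, avoids the external reference, and makes the scheme-theoretic factorization completely explicit; your closing example showing that $\cI_i\not\subset\cI_J$ in general is a nice sharpening that the paper does not mention.
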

\begin{proof}
One can proceed by induction on $p$. Suppose $p =1$. Then
by (\cite{AK}, B.3),  there is a natural retraction $\rJ_S(X_0,X_1) \setminus X_0 \rightarrow X_1$ (i.e. a section of the natural embedding $X_1 \hookrightarrow \rJ_S(X_0,X_1)$).  It follows that the image of $\rJ(X_0,X_1) \setminus X_0$ in $S$ is contained in $\pi_{1}(X_1)$ and similarly for $X_0$. This proves the result in the case that $p=1$. The general case follows by induction.
\end{proof}

As a consequence of the previous lemma, and the universal property of fiber products, one has a commutative diagram:
\begin{equation*}
\xymatrix{
 \rJ_{S}(\bbX) \ar[r] \ar[d]^{q_1}  & \bbP(\bbE)_{\pi(\bbX)} \isom \bbP(\bbE|_{\pi(\bbX)}) \ar[d]^{q_2} \\
\pi(\bbX) \ar@{=}[r] & \pi(\bbX).}
\end{equation*}

In the case of $X \subset \bbP(\cE)$ and $\rJ^{[p]}_S(X) \subset \bbP(\cE^ {\oplus (p+1)})$, we get a commutative diagram:
\begin{equation*}
\xymatrix{
\rJ^{[p]}_S(X)  \ar[r] \ar[d]^{q_1}  & \bbP(\cE^ {\oplus (p+1)}) \ar[d]^{q_2} \\
\pi(X) \ar@{=}[r] & \pi(X).}
\end{equation*}

\begin{remark}
Note that $\bbP(\cE^ {\oplus (p+1)})_{\pi(X)}$ is canonically isomorphic to $\bbP(\cF^ {\oplus (p+1)})$ where 
$\cF = \cE|_{\pi(X)}$ is the restricted bundle.
\end{remark}
\begin{remark}
If $\cE$ is the trivial bundle of rank $n+1$, then we may identify $\bbP(\cE^ {\oplus (p+1)})$ with $\bbP_{S}^{(p+1)(n+1)-1}$. 
\end{remark}

\subsection{Joins and cones}
Suppose now that $S = \Spec(k)$. In the following, we shall drop the subscript $S$ from our notation in the setting of $S = \Spec(k)$ (unless we need to specify the field). Let $X \subset \bbP^{n}$ and $Y \subset \bbP^{m}$ denote two fixed projective subschemes. If $A$ (resp. $B$) is the homogeneous coordinate ring of $X$ (resp. $Y$), then we defined join of $X$ and $Y$ as the projective scheme $\rJ(X,Y) : = \Proj(A \otimes_k B)$. Note that this is naturally a closed subscheme of $\bbP^{n+m+1}$. \\
 
 The cone of $X$, denoted by $\rC(X)$, is by definition the affine scheme $\Spec(A) \subset \bbA^{n+1}$. We shall denote by $o_{X} \in \rC(X)$ the cone point. In the following, we shall sometimes drop the subscript and simply denote by $o$ the cone point. One has a canonical isomorphism:
\begin{equation}\label{joincone} 
\rC(\rJ(X,Y)) \cong \rC(X) \times_k \rC(Y). 
\end{equation}

\subsection{Proofs of cohomological connectivity of joins}
In the following, we shall prove connectivity (i.e. cohomology vanishing) results for iterated relative joins. By cohomology, we shall mean etale cohomology theory on the category of schemes over $k$. Moreover precisely, given a prime $\ell$ not equal to the characteristic of the field $k$, we shall consider the etale cohomology groups $\rH^i_{et}(X,\bbZ/\ell^n\bbZ)$, $\rH^{i}_{et}(X, \bbZ_{\ell})$ or  $\rH^{i}_{et}(X, \bbQ_{\ell})$. We shall usually drop the coefficients (and subscript) and denote these simply by $\rH^i(X)$. \\

\begin{remark}
We remind the reader that by definition $$\rH^{i}_{et}(X, \bbZ_{\ell}) := \varprojlim_n \rH^i_{et}(X,\bbZ/\ell^n\bbZ)$$ and  $\rH^{i}_{et}(X, \bbQ_{\ell}) = \rH^{i}_{et}(X, \bbZ_{\ell}) \otimes_{\bbZ_{\ell}} \bbQ_{\ell}$. In the following, we will prove statements at the level of torsion coefficients, and then pass to inverse limits to obtain statements at the level of $\bbZ_{\ell}$-coefficients (and, after tensoring with $\bbQ_{\ell}$, for $\bbQ_{\ell}$-coefficients). 
\end{remark}

\begin{remark}
If $\sigma: k \hookrightarrow \bbC$ is a fixed embedding, the we may also consider the singular cohomology $\rH^{i}(X_{\sigma}^{an}, \bbZ)$. The results of this section also hold in this setting. 
\end{remark}

\begin{comment}
 Suppose $k$ is not necessarily  algebraically closed, and let $\bar{k}$ denote the separable closure. In this case, the etale cohomology groups $\rH^{i}_{et}(X_{\bar{k}}, \bbZ_{\ell})$ or  $\rH^{i}_{et}(X_{\bar{k}}, \bbQ_{\ell})$ acquire a $G:= \Gal(\bar{k}/k)$-action. If we let $\bbZ_{\ell}(1)$ denote the usual cyclotomic character, then $\rH^{1}(\bbG_{\bar{k}}, \bbZ_\ell) \isom \bbZ_\ell(-1)$ (i.e. the inverse character). Similar remarks apply with $\bbQ_\ell$-coefficients. Note, $X_{\bar{k}} := X \times_{k} \bar{k}$.
\end{comment}

\subsubsection{Connectivity over a point}
In this subsection, we shall work with schemes
$S$ of finite type over a separably closed field $k$,
and $\rH^{*}(S)$ will denote the etale cohomology groups as in the previous paragraph.
Let $X \subset \bbP^{n}$ be a closed subscheme and consider $\rJ^{[p]}(X) \subset \bbP^{(p+1)(n+1)-1}.$ \\

\begin{definition}
Let $X \subset \bbP^{n}$ be a closed subscheme and $d$ an integer such that $d \leq n$. Then $X$ is \emph{cohomologically $d$-connected}  if the restriction homomorphism
$$\rH^{i}(\bbP^{n}) \rightarrow \rH^{i}(X)$$ is an isomorphism for 
all $i <d$, and an injection for $i=d$. 
\end{definition}

\begin{remark}
We note that, if $ \Char(k) = 0$, standard results show that this notion will be independent of the prime $\ell$. In characteristic $p$, this would follow from Deligne's proof of the Weil conjectures if $X$ is also smooth. In general, it would follow from certain standard conjectures in algebraic geometry. For our purposes, we have simply fixed a prime $\ell$ not equal to the characteristic of $k$.
\end{remark}

We begin by proving the following connectivity property of the join. The analogous statement in the setting of singular cohomology was proven by the first author in (\cite{Basu-Toda}). Our goal here is to give a `motivic proof' of this statement which is applicable to any Weil cohomology theory. 

\begin{theorem}
\label{thm:connjoin}
Let $X \subset \bbP^{n}$ be a closed subscheme.
Then $\rJ^{[p]}(X) \subset \bbP^{(p+1)(n+1)-1}$ is cohomologically $p$-connected. In particular, the restriction 
homomorphism
$$\rH^{j}(\bbP^{(p+1)(n+1)-1}) \rightarrow \rH^{i}(\rJ^{[p]}(X))$$
is an isomorphism for $0 \leq j < p$, and an injection  for $j=p$.
\end{theorem}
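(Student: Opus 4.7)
The plan is to reduce the statement to a vanishing of local cohomology at the cone point, descending from $\bbP^N$ to affine cones via the $\bbG_m$-bundle/Gysin machinery. Using the iterated form of the cone isomorphism~\eqref{joincone}, we have $\rC(\rJ^{[p]}(X)) \cong \rC(X)^{p+1}$ with $N+1 = (p+1)(n+1)$, and one views $\bbA^{N+1}\setminus\{o\} \to \bbP^N$ together with its restriction $\rC(X)^{p+1}\setminus\{o\} \to \rJ^{[p]}(X)$ as the $\bbG_m$-bundles associated with $\cO_{\bbP^N}(1)$ and its pullback. The Gysin long exact sequence applied to the pair $(\bbP^N, \rJ^{[p]}(X))$ takes the form
\[
\cdots \to \rH^{j-2}(\bbP^N, \rJ^{[p]}(X))(-1) \to \rH^j(\bbP^N, \rJ^{[p]}(X)) \to \rH^j(\bbA^{N+1}\setminus\{o\}, \rC(X)^{p+1}\setminus\{o\}) \to \cdots,
\]
and a straightforward induction on $j$ shows that it suffices to prove $\rH^j(\bbA^{N+1}\setminus\{o\}, \rC(X)^{p+1}\setminus\{o\}) = 0$ for $0 \le j \le p$. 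Because $\rH^{\ast}(\bbA^{N+1}\setminus\{o\})$ is concentrated in degrees $0$ and $2N+1$, the long exact sequence of this pair further reduces the problem to showing $\rH^0(\rC(X)^{p+1}\setminus\{o\}) \cong \rH^0(\mathrm{pt})$ and $\rH^j(\rC(X)^{p+1}\setminus\{o\}) = 0$ for $1 \le j \le p-1$.

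Next I would address these via local cohomology at the cone point. Since $\rC(X)$ is $\bbA^1$-contractible to $o_X$ via the scaling action, every Weil cohomology theory gives $\rH^{\ast}(\rC(X)) \cong \rH^{\ast}(\mathrm{pt})$; combined with $\rC(X)\setminus\{o_X\} \ne \emptyset$, the long exact sequence of the pair $(\rC(X), \rC(X)\setminus\{o_X\})$ forces $\rH^0_{o_X}(\rC(X)) = 0$, so $\rH^{\ast}_{o_X}(\rC(X))$ is concentrated in degrees $\ge 1$. The relative K\"unneth formula applied to the iterated product of pairs $(\rC(X), \rC(X)\setminus\{o_X\})^{\times(p+1)} = (\rC(X)^{p+1}, \rC(X)^{p+1}\setminus\{o\})$ then yields
\[
\rH^{\ast}_o(\rC(X)^{p+1}) \cong \rH^{\ast}_{o_X}(\rC(X))^{\otimes(p+1)},
\]
which is concentrated in degrees $\ge p+1$. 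In particular $\rH^j_o(\rC(X)^{p+1}) = 0$ for $j \le p$, and the long exact sequence for $(\rC(X)^{p+1}, \rC(X)^{p+1}\setminus\{o\})$ together with $\rH^{\ast}(\rC(X)^{p+1}) \cong \rH^{\ast}(\mathrm{pt})$ then delivers the required behavior of $\rH^{\ast}(\rC(X)^{p+1}\setminus\{o\})$ in low degrees, closing the chain of reductions.

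The main technical point is verifying the K\"unneth formula for local cohomology at the cone point in the chosen Weil cohomology framework. This is part of the standard axiomatic package for singular and \'etale cohomology; alternatively, the identity $\rH^{\ast}_o(\rC(X)^{p+1}) \cong \rH^{\ast}_{o_X}(\rC(X))^{\otimes(p+1)}$ can be verified directly by iterating the 2-fold K\"unneth formula for pairs, or by analyzing the Mayer-Vietoris/\v{C}ech spectral sequence associated with the canonical affine cover $W_i = \{x_i \ne o_X\}$ of $\rC(X)^{p+1}\setminus\{o\}$, where the concentration of each $\rH^{\ast}_{o_X}(\rC(X))$ in positive degrees is precisely what produces the $(p+1)$-fold gain in connectivity that the theorem asserts.
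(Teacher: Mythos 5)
Your overall skeleton matches the paper's: both arguments descend along the $\bbG_m$-bundle/Gysin comparison between $\bbA^{N+1}\setminus\{o\}\to\bbP^{N}$ and $\rC'(\rJ^{[p]}(X))\to\rJ^{[p]}(X)$, and both reduce the theorem to the low-degree behaviour of $\rH^{*}(\rC(X)^{p+1}\setminus\{o\})$, which is exactly the paper's Lemma~\ref{lem:punconcont}; your packaging in relative cohomology of pairs simply replaces the paper's five-lemma chase on the ladder of Gysin (or Leray) sequences. Where you genuinely diverge is in the proof of that key vanishing: the paper argues by induction on $p$, covering $\rC'(\rJ^{[p]}(X))$ by $U=\rC'(\rJ^{[p-1]}(X))\times\rC(X)$ and $V=\rC(X)^{\times p}\times\rC'(X)$ and running Mayer--Vietoris together with the absolute K\"unneth sequence, whereas you compute local cohomology at the cone point and use a K\"unneth formula with supports to get $\rH^{j}_{o}(\rC(X)^{p+1})=0$ for $j\le p$ in one stroke. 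Your route is slicker and makes the source of the $(p+1)$-fold gain in connectivity transparent, at the cost of invoking a supports/relative K\"unneth which the paper deliberately avoids. Two caveats are needed to make it airtight. First, the input $\rH^{*}(\rC(X))\cong\rH^{*}(\mathrm{pt})$ is not automatic from ``$\bbA^1$-contraction by scaling'' for the singular cone in an arbitrary good theory; the paper proves it by blowing up the vertex (Lemma~\ref{lem:conecontract}), so either quote homotopy invariance for the specific theories used (\'etale cohomology with torsion coefficients, singular cohomology) or import that lemma. Second, with field coefficients (the literal Weil-cohomology setting) your identity $\rH^{*}_{o}(\rC(X)^{p+1})\cong\rH^{*}_{o_X}(\rC(X))^{\otimes(p+1)}$ is exact, but with $\bbZ$ or $\bbZ_\ell$ coefficients (which the paper also allows) the K\"unneth sequence carries $\Tor$ corrections that lower degree by one, so concentration of each factor in degrees $\ge 1$ only gives a priori concentration of the product in degrees $\ge p$; the borderline degree-$p$ contribution is an iterated $\Tor$ of $(p+1)$ copies of $\rH^{1}_{o_X}(\rC(X))$, and it vanishes because $\rH^{1}_{o_X}(\rC(X))\cong\coker\bigl(\rH^{0}(\rC(X))\to\rH^{0}(\rC'(X))\bigr)$ is free over the coefficient ring --- but this observation must be made explicitly for your argument to close at the critical degree $j=p$, mirroring the care the paper takes with $\Tor$ terms in its K\"unneth steps.
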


We begin with some preliminary remarks. In the following, for any closed subscheme $X \subset \bbP^{n}$ we set $\rC'(X):= \rC(X) \setminus o_{X}$. Note that one has a natural cartesian diagram:
\begin{equation}
\label{eqn:punctcone}
\xymatrix{
\rC'(X) \ar[r] \ar[d] & \bbA^{n+1} \setminus 0 \ar[d] \\
X \ar[r] & \bbP^{n} }
\end{equation}
In particular, the natural projection $\rC'(X) \rightarrow X$ is a $\bbG_{m}$-bundle. \\

The following lemma is well-known. Over the complex numbers, it follows directly from the contractibility of the cone. We provide a proof here applicable to any `good cohomology theory' due to a lack of reference.

\begin{lemma}\label{lem:conecontract}
The natural inclusion $o_X \hookrightarrow C(X)$ induced an isomorphism on cohomology:
$$\rH^{i}(\rC(X)) \xrightarrow{\isom} \rH^{i}(o_X).$$
\end{lemma}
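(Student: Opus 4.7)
The plan is to exploit the natural $\mathbb{G}_m$-action on the cone $\rC(X) = \Spec(A)$ (coming from the grading on $A$) and extend it to an $\bbA^1$-action by scalar multiplication, with the origin $o_X$ as the fixed point at $t=0$. This gives an algebraic deformation retraction of $\rC(X)$ onto $o_X$, and the desired cohomology isomorphism will then follow from $\bbA^1$-homotopy invariance, which is part of the standard package for any `good' cohomology theory (singular, etale with $\bbZ_\ell$ or $\bbQ_\ell$ coefficients, etc.).

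More concretely, first I would define the scaling morphism $\mu \colon \bbA^1 \times \rC(X) \to \rC(X)$ coming from the fact that $A$ is nonnegatively graded: in ring-theoretic terms, this is the $k$-algebra map $A \to k[t] \otimes_k A$ sending a homogeneous element $a \in A_d$ to $t^d \otimes a$. The key properties are that $\mu \circ i_1 = \mathrm{id}_{\rC(X)}$ and $\mu \circ i_0 = \iota \circ c$, where $i_0, i_1 \colon \rC(X) \hookrightarrow \bbA^1 \times \rC(X)$ are the inclusions at $t=0, 1$, while $\iota \colon o_X \hookrightarrow \rC(X)$ is the cone-point inclusion and $c \colon \rC(X) \to o_X$ is the structure map.

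Next I would invoke $\bbA^1$-homotopy invariance: the projection $\pi \colon \bbA^1 \times \rC(X) \to \rC(X)$ induces an isomorphism $\pi^* \colon \rH^i(\rC(X)) \xrightarrow{\isom} \rH^i(\bbA^1 \times \rC(X))$. Since $i_0^*$ and $i_1^*$ are both left inverses of $\pi^*$, they must agree on cohomology. Applying this to $\mu$ yields
\[
(\iota \circ c)^* = (\mu \circ i_0)^* = (\mu \circ i_1)^* = \mathrm{id}^*
\]
on $\rH^i(\rC(X))$, i.e., $c^* \circ \iota^* = \mathrm{id}_{\rH^i(\rC(X))}$. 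Combined with the obvious identity $\iota^* \circ c^* = \mathrm{id}_{\rH^i(o_X)}$ coming from $c \circ \iota = \mathrm{id}_{o_X}$, this shows $\iota^*$ is an isomorphism with inverse $c^*$.

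The only mild subtlety is verifying that the scaling morphism $\mu$ exists as an algebraic map for any homogeneous coordinate ring $A$ (this is routine from the grading), and checking that $\bbA^1$-homotopy invariance applies in the generality needed. For etale cohomology with $\bbQ_\ell$- or $\bbZ_\ell$-coefficients, homotopy invariance is standard; for singular cohomology over $\bbC$ it follows from the fact that $\bbA^1_\bbC \simeq \bbC$ is contractible in the analytic topology; and more generally this is built into the axioms of a Weil cohomology theory. So no serious obstacle is expected — the lemma is essentially the algebraic manifestation of the topological contractibility of an affine cone onto its apex.
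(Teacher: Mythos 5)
Your proof is correct, but it takes a genuinely different route from the paper. You contract the cone directly via the scaling action: the graded $k$-algebra map $A \to A\otimes_k k[t]$, $a\in A_d \mapsto a\otimes t^d$, gives the morphism $\mu\colon \bbA^1\times \rC(X)\to \rC(X)$ interpolating between the identity (at $t=1$) and the constant map to $o_X$ (at $t=0$, using $A_0=k$), and then $\bbA^1$-homotopy invariance forces $i_0^*=i_1^*=(\pi^*)^{-1}$, hence $c^*\circ\iota^*=\mathrm{id}$ and $\iota^*\circ c^*=\mathrm{id}$. The paper instead blows up the cone point: the blow-up $Y\to X$ is the total space of the line bundle $\cO_X(-1)$ with exceptional divisor $E$ equal to the zero section, so $\rH^{\ast}(Y)\cong\rH^{\ast}(X)\cong\rH^{\ast}(E)$, and the blow-up (proper descent) long exact sequence then forces $\rH^{i}(\rC(X))\cong\rH^{i}(o_X)$. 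The inputs are comparable in strength: the paper's identification $\rH^{\ast}(Y)\cong\rH^{\ast}(X)$ for a line bundle is itself a form of $\bbA^1$-invariance (via Leray, or the Gysin sequence they quote), while your argument needs $\bbA^1$-invariance for the possibly singular affine scheme $\rC(X)$ itself --- which is indeed standard for the theories in play (for \'etale cohomology with torsion coefficients prime to $\Char(k)$ it follows from smooth base change/acyclicity of affine space, and then passes to $\bbZ_\ell$, $\bbQ_\ell$; for singular cohomology it is the contractibility of $\bbC$; and an isomorphism of underlying groups is automatically one of mixed Hodge structures). One small caveat: your remark that homotopy invariance is ``built into the axioms of a Weil cohomology theory'' is not literally accurate, since the classical Weil axioms concern smooth projective varieties; what you actually use is homotopy invariance for the specific theories the paper works with, which is fine and no more than the paper itself implicitly assumes. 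The paper's choice has the minor structural advantage that the same line bundle $Y\to X$ and Gysin sequence are reused in its alternate proof of Theorem~\ref{thm:connjoin}, whereas your argument is self-contained and makes the slogan ``the cone contracts algebraically onto its apex'' literal.
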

\begin{proof}
Let $Y$ denote the blow-up of $C(X)$ at $o_X$. Then it is a standard fact that there is a natural map $\pi: Y \rightarrow X$ which realizes $Y$ as a line bundle over $X$. Moreover, the exceptional fiber $E$ of the blow-up $Y$ is canonically identified with the zero section of $\pi$. In particular, $\rH^{\cdot}(Y) \isom \rH^{\cdot}(X)$ and $\rH^{\cdot}(E) \isom \rH^{\cdot}(X)$. 
On the other hand, one has the usual long exact sequence for the cohomology of the blow-up (\cite[\href{https://stacks.math.columbia.edu/tag/0EW5}{Tag 0EW5}]{SP}) :
$$
\cdots \rightarrow \rH^{i}(C(X)) \rightarrow \rH^{i}(o_X) \oplus \rH^{i}(Y) \rightarrow \rH^{i}(E) \rightarrow \rH^{i+1}(C(X)) \rightarrow \cdots,$$
where the arrows are induced by the natural pull-back maps on cohomology. Since the restriction homomorphism $\rH^{i}(Y) \rightarrow \rH^{i}(E)$ is an isomorphism (by the remarks above), the natural restriction homomorphisms $\rH^{i}(C(X)) \rightarrow \rH^{i}(o_X)$ must be isomorphisms.
\end{proof}

\begin{lemma}
\label{lem:punconcont}
With notation as above, one has 
$$\rH^{i}(\rC'(\rJ^{[p]}(X))) = 0 \text{ for all } 0 < i < p$$
and $$\rH^{0}(\rC'(\rJ^{[p]}(X))) = \rH^{0}(o).$$
\end{lemma}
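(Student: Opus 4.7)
The plan is to prove the lemma by induction on $p$, using Mayer--Vietoris combined with Lemma~\ref{lem:conecontract} and the K\"unneth formula. Iterating \eqref{joincone} gives $\rC(\rJ^{[p]}(X)) \cong \rC(X)^{p+1}$, under which the cone point corresponds to $o = (o_X, \ldots, o_X)$. For the inductive step, I cover $\rC'(\rJ^{[p]}(X))$ by the two open subsets $A := \rC'(\rJ^{[p-1]}(X)) \times \rC(X)$ and $B := \rC(\rJ^{[p-1]}(X)) \times \rC'(X)$, with $A \cap B = \rC'(\rJ^{[p-1]}(X)) \times \rC'(X)$. Applying K\"unneth together with Lemma~\ref{lem:conecontract} (iterated to give $\rH^{*}(\rC(\rJ^{[p-1]}(X))) = \rH^{*}(o)$) yields $\rH^{i}(A) \cong \rH^{i}(\rC'(\rJ^{[p-1]}(X)))$, $\rH^{i}(B) \cong \rH^{i}(\rC'(X))$, and $\rH^{i}(A \cap B) \cong \bigoplus_{j+k=i} \rH^{j}(\rC'(\rJ^{[p-1]}(X))) \otimes \rH^{k}(\rC'(X))$.

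The base case $p = 1$ reduces to showing $\ker\bigl(\rH^{0}(A) \oplus \rH^{0}(B) \to \rH^{0}(A \cap B)\bigr) = R$ (the diagonal copy of the coefficient ring), which follows from the fact that $\rH^{0}(\rC(X)) \to \rH^{0}(\rC'(X))$ is the injection $R \hookrightarrow R^{k}$ (where $k$ counts the components of $X$) sending $1$ to $(1, \ldots, 1)$. For the inductive step with $p \geq 2$, I analyze the Mayer--Vietoris long exact sequence degree by degree. At $i = 0$, the same diagonal argument gives $\rH^{0}(\rC'(\rJ^{[p]}(X))) = R$. For $1 \leq i \leq p - 2$, the inductive hypothesis forces $\rH^{i}(A) = 0$, K\"unneth collapses $\rH^{i}(A \cap B)$ to the single summand $\rH^{i}(\rC'(X))$, and the restriction $\rH^{i}(B) \to \rH^{i}(A \cap B)$ is identified with the identity; this makes both the outgoing kernel and the incoming cokernel vanish, giving $\rH^{i}(\rC'(\rJ^{[p]}(X))) = 0$.

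The main obstacle is the boundary degree $i = p - 1$, where the inductive hypothesis gives no control over $\rH^{p-1}(\rC'(\rJ^{[p-1]}(X)))$. The key point is that by K\"unneth together with the inductive vanishing in intermediate degrees,
\[
\rH^{p-1}(A \cap B) \cong \rH^{p-1}(\rC'(X)) \;\oplus\; \bigl(\rH^{p-1}(\rC'(\rJ^{[p-1]}(X))) \otimes \rH^{0}(\rC'(X))\bigr),
\]
and the restrictions from $A$ and $B$ land in these two disjoint K\"unneth summands (via $\alpha \mapsto \alpha \otimes (1, \ldots, 1)$ and $\beta \mapsto \beta$, respectively), so the difference map $\rH^{p-1}(A) \oplus \rH^{p-1}(B) \to \rH^{p-1}(A \cap B)$ is injective. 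Combined with the vanishing of the cokernel of the preceding map in the Mayer--Vietoris sequence at degree $p - 2$ (by the analysis above, or by the base-case computation when $p = 2$), this forces $\rH^{p-1}(\rC'(\rJ^{[p]}(X))) = 0$, completing the induction.
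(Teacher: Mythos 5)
Your proof is correct and takes essentially the same route as the paper's: the same induction on $p$, the same Mayer--Vietoris cover (your $B=\rC(\rJ^{[p-1]}(X))\times \rC'(X)$ is the paper's $\rC(X)^{\times p}\times \rC'(X)$ under the identification $\rC(\rJ^{[p-1]}(X))\cong \rC(X)^{\times p}$), the same use of Lemma~\ref{lem:conecontract} plus K\"unneth, and the same injectivity argument in degree $p-1$ via the two distinct K\"unneth summands. The only cosmetic differences are that you verify the base case $p=1$ by the explicit diagonal-kernel computation on $\rH^0$ instead of citing connectedness via Zariski's main theorem, and you absorb the paper's separate $p=2$ step into the general inductive step.
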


Before proving the lemma, we give two proofs of Theorem~\ref{thm:connjoin}. 
The first uses a spectral sequence argument, while the second proof uses the following standard Gysin long exact sequence. 

\begin{lemma}\cite[Corollaire 1.5, Expos\'{e} VII]{SGA5}
\label{lem:gysin}
Let $Z$ be a scheme, and $X \rightarrow Z$ be a rank $r$ vector bundle. Let $U \subset X$ denote the complement of the zero section. Then there is a long exact sequence in cohomology:
$$\cdots \rightarrow \rH^{i -2r}(Z)(-r) \rightarrow   \rH^{i}(X) \rightarrow \rH^{i}(U) \rightarrow \cdots $$

\end{lemma}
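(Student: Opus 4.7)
My plan is to derive the Gysin long exact sequence as a formal consequence of two ingredients: the standard localization triangle for a closed embedding with open complement, together with a cohomological purity (Thom) isomorphism identifying the local cohomology of a vector bundle along its zero section. This is the classical approach, and the reference to SGA 5 covers the $\ell$-adic case, with completely parallel constructions available for the other Weil cohomology theories listed in \eqref{itemlabel:examples:1} and \eqref{itemlabel:examples:2}.

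First I would set up the localization sequence. Let $s\colon Z \hookrightarrow X$ be the zero section and $j\colon U \hookrightarrow X$ the open complement. Applying the standard distinguished triangle $s_{*}s^{!} \to \mathrm{id} \to j_{*}j^{*} \xrightarrow{+1}$ to the constant coefficient sheaf and taking cohomology of $X$ yields
\[
\cdots \to \rH^{i}_{Z}(X) \to \rH^{i}(X) \to \rH^{i}(U) \to \rH^{i+1}_{Z}(X) \to \cdots.
\]
This is the only input that uses the six-functor formalism directly, and it is available for each of the cohomology theories under consideration.

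The core step is to establish the purity/Thom isomorphism
\[
\rH^{i-2r}(Z)(-r) \xrightarrow{\;\sim\;} \rH^{i}_{Z}(X),
\]
after which substitution into the localization sequence completes the proof. I would construct a Thom class $\tau \in \rH^{2r}_{Z}(X)(r)$ (for instance, via the top Chern class of the tautological quotient on a projective completion of the bundle), and verify that cup product with $\tau$ realizes the desired isomorphism. To reduce the verification to a local computation, I would pick a Zariski open cover of $Z$ over which the bundle trivializes and use a Mayer--Vietoris argument to reduce to the case $X = Z \times \bbA^{r}$ with $Z \hookrightarrow X$ the embedding at the origin of the $\bbA^{r}$-factor; a K\"unneth argument then reduces further to the base case $Z = \Spec(k)$, $X = \bbA^{r}$, $U = \bbA^{r} \setminus \{0\}$, which is computed directly (and is exactly where the Tate twist $(-r)$ and the shift by $2r$ enter, through the known cohomology of punctured affine space).

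The main obstacle is the purity isomorphism: constructing the Thom class globally and proving the cup product is an isomorphism require, in each theory, homotopy invariance of cohomology for vector bundles, an orientation for the zero-section embedding, and compatibility of the chosen Thom class with base change on $Z$. For $\ell$-adic cohomology these properties are exactly what is proved in \cite[Expos\'e VII]{SGA5}, and the analogous statements for singular cohomology with mixed Hodge structures and for etale cohomology with Galois action are standard; this is what allows the statement to be invoked uniformly across all the good cohomology theories fixed at the start of this subsection.
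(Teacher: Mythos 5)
The paper does not actually prove this lemma --- it is invoked as a quoted result from \cite[Expos\'e VII, Corollaire 1.5]{SGA5} --- and your argument (the localization long exact sequence for the zero section $s(Z)\subset X$ with open complement $U$, combined with the purity/Thom isomorphism $\rH^{i-2r}(Z)(-r)\cong \rH^{i}_{Z}(X)$, checked by trivializing the bundle Zariski-locally, Mayer--Vietoris, and the base computation for $\bbA^{r}\setminus\{0\}$) is precisely the standard proof underlying that citation. So your proposal is correct and takes essentially the same route as the source the paper relies on; no gap to report.
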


\begin{comment}
\begin{remark}
Let $\pi: X \rightarrow Y$ be a morphism of schemes. In the proof below, we make
use of the the derived functors $R\pi_{*}A$ for $A = \bbZ_{\ell}$ or $A =\bbQ_{\ell}$. 
We do not recall their definition, but refer the reader to (\cite{KW}, Appendix A) for details on the construction 
of derived functors in the setting of $\ell$-adic constructible sheaves. 
\end{remark}
\end{comment}

We are now in a position to prove Theorem~\ref{thm:connjoin}.
\begin{proof}[Proof of Theorem~\ref{thm:connjoin}]
The conclusion follows by an application of Lemma~\ref{lem:punconcont} to the Leray spectral sequences for the $\bbG_{m}$ bundle $\pi: \rC'(\rJ^{[p]}(X)) \rightarrow \rJ^{[p]}(X)$. More precisely, the cartesian diagram \ref{eqn:punctcone} of $\bbG_m$-bundles gives rise to a commutative diagram of spectral sequences:
$$
\xymatrix{
\rE_2^{i,j}(X):=\rH^{i}(\rJ^{[p]}(X), R^{j}\pi_{*}(\bbZ/\ell^n\bbZ) \ar@2[r] & \rH^{i+j}(\rC'(\rJ^{[p]}(X))) \\
\rE_2^{i,j}(\bbP^{(p+1)(n+1)-1}):=\rH^{i}(\bbP^{(p+1)(n+1)-1}, R^{j}\pi_{*}(\bbZ/\ell^n\bbZ))) \ar@2[r] \ar[u] & \rH^{i+j}(\bbA^{(p+1)(n+1)} \setminus 0) \ar[u] }.
$$
Here, by abuse of notation, we use the same notation $\pi$ to denote the natural maps $\bbA^{(p+1)(n+1)} \setminus 0 \rightarrow \bbP^{(p+1)(n+1)-1}$ and 
$\rC'(\rJ^{[p]}(X)) \rightarrow \rJ^{[p]}(X)$. Since $\pi$ is a $\bbG_m$-bundle, $R^{j}\pi_{*}(\bbZ/\ell^n\bbZ)$ is a local system with stalk at $x \in \rJ^{[p]}(X)$ given by $\rH^{j}(\bbG_m)$, and similarly for $x \in \bbP^{(p+1)(n+1)-1}$. Moreover, in the case of $\bbP^{(p+1)(n+1)-1}$ it is the trivial local system. Since $R^{j}\pi_{*}(\bbZ/\ell^n\bbZ)$ on $\rJ^{[p]}(X)$ is the restriction of the corresponding local system on $\bbP^{(p+1)(n+1)-1}$ (due to the fact that the base change map is an isomorphism for zariski or etale local $G$-torsors as a consequence of the Kunneth formula, and the fact that \ref{eqn:punctcone} is cartesian), it is also a trivial local system. In particular, the cohomology groups $\rE_2^{i,j}(X)$ are zero for $j \neq 0,1$, and otherwise one has $\rE_2^{i,0}(X)=\rH^{i}(\rJ^{[p]}(X))$ and 
$\rE_2^{i,1}(X) = \rH^{i}(\rJ^{[p]}(X)) \otimes \rH^{1}(\bbG_m)$, and similarly for $\rE_2^{i,j}(\bbP^{(p+1)(n+1)-1})$. Note that one can identify
$\rH^{i}(\rJ^{[p]}(X)) \otimes \rH^{1}(\bbG_m) = \rH^{i}(\rJ^{[p]}(X))(-1)$.\\

It follows that both spectral sequences are concentrated in two columns and degenerate at $\rE_3$. In particular, they give rise to a commutative diagram of long exact sequences:

\begin{equation}
\label{eqn:commdiag}
\begin{tikzcd}[row sep=large, column sep=tiny]
\  \ar[r] & \rH^{i-1}(C'(\rJ^{[p]}(X))) \ar[r] & \rH^{i-2}(\rJ^{[p]}(X))(-1) \ar[r] & \rH^{i}(\rJ^{[p]}(X)) \ar[r] & \rH^i(C'(\rJ^{[p]}(X)))
\ar[r]
& \   \\ 
\  \ar[r]& \rH^{i-1}(\bbA^{N} \setminus 0) \ar[r] \ar[u]  & \rH^{i-2}(\bbP^{N-1})(-1) \ar[r]\ar[u] & \rH^{i}(\bbP^{N-1}) \ar[r] \ar[u] & \rH^i(\bbA^{N} \setminus 0) \ar[u] 
\ar[r]
& \ 
\end{tikzcd}
\end{equation}

where $N = (p+1)(n+1)$.
The result is now an easy consequence of Lemma~\ref{lem:punconcont}, and an application of the five lemma to the commutative diagram of long exact sequences above. 
\end{proof}
\begin{proof}[Alternate proof  of Theorem~\ref{thm:connjoin} using the Gysin]
Let $X' = \rJ^{[p]}(X)$ and $Y' \rightarrow X'$ be the line bundle in the proof of Lemma~\ref{lem:conecontract}. Similarly, let $X'' : = \bbP^{(p+1)(n+1)-1}$ and $Y'' \rightarrow X''$ the corresponding line bundle. Note that, in the case of $X''$, this is simply the tautological line bundle (i.e. the bundle given by the locally free sheaf $\cO(-1)$ on $X''$). Since 
$Y'$ is simply the restriction of $Y''$ to $X'$, it follows that $Y'$ is the line bundle associated to the locally free sheaf $\cO_{X'}(-1)$. We can now apply Lemma~\ref{lem:gysin} to both $Y' \rightarrow X'$ and $Y'' \rightarrow X''$ to get a commutative diagram of long exact sequences:

\[
\begin{tikzcd}[row sep=large, column sep=tiny]
\cdots \ar[r] & \rH^{i-2}( \rJ^{[p]}(X))(-1) \ar[r]  & \rH^{i}( \rJ^{[p]}(X))
\ar[r] & \rH^i(C'(\rJ^{[p]}(X))) \ar[r]&\cdots\\
\cdots \ar[r]& \rH^{i-2}(\bbP^{(p+1)(n+1)-1})(-1)\ar[u]  \ar[r]& \rH^{i}(\bbP^{(p+1)(n+1)-1}) \ar[r] \ar[u] & \rH^i(\bbA^{(p+1)(n+1)} \setminus 0) \ar[u] \ar[r]&\cdots
\end{tikzcd}
\]

Here we have identified $\rH^{*}(Y')$ with $\rH^{*}(X')$ (since this is a line bundle over $X'$), and similarly for $Y''$ and $\bbP^{(p+1)(n+1)-1}$.
This diagram is the same as diagram \ref{eqn:commdiag}, and one can proceed now as in the previous lemma.
\end{proof}

In the following, we shall make repeated use of the K\"{u}nneth formula for cohomology with coefficients in a principal ideal domain. We recall it here for the convenience of the reader. In particular, given schemes $X$ and $Y$ over $k$ (separably closed) one has the following  {\em K\"{u}nneth} short exact sequence for etale cohomology with $R = \bbZ/\ell^n\bbZ$-coefficients (\cite[\href{https://stacks.math.columbia.edu/tag/0F1P}{Tag 0F1P}]{SP}):
\footnote{Note that in loc. cit. it is shown that $R\Gamma(X,R) \otimes^{\bbL} R\Gamma(Y,R) \cong R\Gamma(X \times Y,R)$. This gives rise to the standard ${\rm Tor}$ spectral sequence. If $R =\bbZ/\ell^n\bbZ$, then all ${\rm Tor}^i$'s vanish for $i > 1$, and the spectral sequence gives the Kunneth short exact sequence.
}
\begin{equation}\label{K\"{u}nneth}
0 \rightarrow \bigoplus_{r+s =k} \rH^{r}(X) \otimes_R \rH^{s}(Y) \rightarrow \rH^{k}(X \times Y) \rightarrow \bigoplus_{r+s =k+1} {\rm Tor}_1^{R}(\rH^{r}(X),\rH^{s}(Y)) \rightarrow 0.
\end{equation}
If $R= \bbQ_{\ell}$, then the ${\rm Tor}$ term vanishes and one has the following simplified formula (see for example \cite[page 267]{Milne}):
$$ \bigoplus_{r+s =k} \rH^{r}(X) \otimes_R \rH^{s}(Y) \xrightarrow{\cong} \rH^{k}(X \times Y).$$

\begin{proof}[Proof of Lemma~\ref{lem:punconcont}]
We shall prove this by induction on $p$.  In the following, we denote by $o$ the cone point.\\
\begin{enumerate}[Step 1.]
\item
\label{itemlabel:lem:punconcont:1}
Suppose $p = 1$. In this case, we are reduced to showing that 
$\rC'(\rJ(X,X)) = (\rC(X) \times \rC(X)) \setminus o$ is connected. This is follows from Grothendieck's proof of Zariski's main theorem (or by hand). In fact, this is true more generally  for $\rC'(\rJ^{[r]}(X))$.\\
\item
\label{itemlabel:lem:punconcont:2}
Suppose $p= 2$. In this case, we are reduced to showing that
$\rH^{1}(\rC(X)^{\times 3} \setminus o) = 0$. Let $U := (\rC(X)^{\times 2} \setminus o) \times \rC(X)$ and $V := \rC(X)^{\times 2} \times \rC'(X)$. Then $\{U,V\}$ is an open cover of  $\rC(X)^{\times 3} \setminus o$ and the intersection
$U \cap V = (\rC(X)^{\times 2} \setminus o) \times \rC'(X).$
The Mayer-Vietoris sequence (and Step 1) gives an exact sequence:
$$
0 \rightarrow \rH^{1}(\rC(X)^{\times 3} \setminus o) \rightarrow
\rH^{1}(U) \oplus \rH^{1}(V) \rightarrow \rH^{1}(U \cap V) \rightarrow \cdots
$$
Note that the left most arrow is an injection, since the previous arrow in the Mayer-Vietoris sequence must be a surjection by Step 1.
By \ref{K\"{u}nneth} and \ref{lem:conecontract}, $\rH^{1}(U) = \rH^{1}(\rC(X)^{\times 2} \setminus o)$. Note that the cohomology of the point is zero except in degree $0$, where it is simply the coefficient ring $R$; in particular, the ${\rm Tor_1}$-terms in the K\"{u}nneth exact sequence vanish. Similarly, $\rH^{1}(V) = \rH^{1}(\rC'(X))$. Another application of the K\"{u}nneth exact sequence shows that the third arrow in the above sequence is an injection. It follows that  $\rH^{1}(\rC(X)^{\times 3} \setminus o) = 0$.\\
\item
\label{itemlabel:lem:punconcont:3}
Suppose the Proposition is known for all $m < p$. We need show that $\rH^{i}(\rC'(\rJ^{[p]}(X)) = 0$ for all $0 < i < p$. Let 
$U = \rC'(\rJ^{[p-1]}(X)) \times C(X)$ and $V = \rC(X)^{\times p} \times \rC'(X)$. Note that $\{U,V\}$ is an open cover of $\rC'(\rJ^{[p]}(X))$ and 
$U \cap V = \rC'(\rJ^{[p-1]}(X)) \times \rC'(X)$. By an application of the 
K\"{u}nneth exact sequence:
\begin{enumerate}[(a)]
\item
$\rH^{i}(U) = 0$ for all $0 < i < p-1$,
\item
$\rH^{i}(V) = \rH^{i}(\rC'(X))$ for all $i \geq 0$,
\item
$\rH^{i}(U \cap V) = \rH^{i}(\rC'(X)$ for all $i < p-1$.
\end{enumerate}
Therefore, an application of Mayer-Vietoris shows that $\rH^{i}(\rC'(\rJ^{[p]}(X))) = 0$ for all $0 < i < p-1$. Moreover, in degree $p-1$ one has an exact sequence:

$$0 \rightarrow \rH^{p-1}(\rC'(\rJ^{[p]}(X))) \rightarrow \rH^{p-1}(U) \oplus \rH^{p-1}(V) \rightarrow \rH^{p-1}(U \cap V) \rightarrow \cdots.$$
An argument via K\"{u}nneth, as in 
Step~\ref{itemlabel:lem:punconcont:2},
shows that the third arrow is injective and the result follows.
\end{enumerate}
\end{proof}

\begin{remark}
The result only uses formal properties of a cohomology theory (K\"{u}nneth, Mayer-Vietoris, Leray/Gysin) and contractibility of the cone. 
\end{remark}

Note that the proof of Theorem~\ref{thm:connjoin} holds verbatim in the multi-join setting of the following theorem.

\begin{theorem}\label{thm:connmultijoin}
Let for $0 \leq i \leq p$, $X_i \subset \bbP^{n_i}$  be  closed subschemes.
Then $\rJ^{[p]}(\bbX) \subset \bbP^{N}$ (with $N = \sum_{i=0}^{p}(n_i + 1) -1$) is cohomologically $p$-connected.
\end{theorem}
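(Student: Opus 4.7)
The plan is to follow the proof of Theorem~\ref{thm:connjoin} essentially verbatim, replacing the self-product $X \times \cdots \times X$ (with $p+1$ factors) by the product $X_0 \times \cdots \times X_p$. The cone-join isomorphism \eqref{joincone} generalizes immediately to
$$\rC(\rJ^{[p]}(\bbX)) \cong \rC(X_0) \times_k \cdots \times_k \rC(X_p),$$
so that the punctured cone $\rC'(\rJ^{[p]}(\bbX))$ fits in a cartesian diagram of $\bbG_m$-bundles sitting over the embedding $\rJ^{[p]}(\bbX) \hookrightarrow \bbP^N$, analogous to diagram~\eqref{eqn:punctcone}.

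First I would apply Lemma~\ref{lem:gysin} (or the Leray spectral sequence, as in the first proof of Theorem~\ref{thm:connjoin}) to both the $\bbG_m$-bundle $\rC'(\rJ^{[p]}(\bbX)) \to \rJ^{[p]}(\bbX)$ and $\bbA^{N+1}\setminus 0 \to \bbP^N$. Functoriality then produces a commutative diagram of long exact sequences of exactly the same shape as \eqref{eqn:commdiag}, with $(p+1)(n+1)$ replaced by $N+1 = \sum_{i=0}^p (n_i+1)$. Since $\bbA^{N+1}$ is cohomologically a point and $\rH^*(\bbP^N)$ is known, the five lemma reduces the entire theorem to establishing the multi-join analog of Lemma~\ref{lem:punconcont}:
$$\rH^i(\rC'(\rJ^{[p]}(\bbX))) = 0 \text{ for } 0 < i < p, \qquad \rH^0(\rC'(\rJ^{[p]}(\bbX))) = \rH^0(o).$$

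The heart of the argument is therefore this vanishing statement, which I would prove by induction on $p$ mirroring the three steps of Lemma~\ref{lem:punconcont}. The base case $p=1$ is connectedness of $(\rC(X_0)\times \rC(X_1))\setminus o$, which follows from Zariski's main theorem exactly as before (irreducibility of each cone through the origin suffices). For the inductive step, using the cone-join identification I would cover $\rC'(\rJ^{[p]}(\bbX))$ by
$$U = \rC'(\rJ^{[p-1]}(X_0,\ldots,X_{p-1})) \times \rC(X_p), \quad V = \rC(X_0) \times \cdots \times \rC(X_{p-1}) \times \rC'(X_p),$$
with intersection $\rC'(\rJ^{[p-1]}(X_0,\ldots,X_{p-1})) \times \rC'(X_p)$. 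Feeding the inductive hypothesis together with Lemma~\ref{lem:conecontract} and the K\"unneth sequence \eqref{K\"{u}nneth} into a Mayer--Vietoris sequence yields the required vanishing in degrees $0 < i < p-1$ and, in degree $p-1$, gives a four-term exact sequence whose third map is injective by a second application of K\"unneth, producing the desired conclusion.

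The only potential obstacle is confirming that the K\"unneth and Mayer--Vietoris bookkeeping still goes through when the factors $\rC(X_i)$ are distinct rather than being copies of a single $\rC(X)$. But the proof of Lemma~\ref{lem:punconcont} uses only two formal properties of the factors, both of which hold for each $X_i$ individually: cones are cohomologically contractible (Lemma~\ref{lem:conecontract}), and punctured cones have well-behaved low-degree cohomology through the $\bbG_m$-bundle structure. Consequently no new input is needed and the argument carries over literally.
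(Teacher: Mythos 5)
Your proposal is correct and is exactly the route the paper takes: the paper's proof of Theorem~\ref{thm:connmultijoin} consists precisely of the observation that the argument for Theorem~\ref{thm:connjoin} (cone--join identification, $\bbG_m$-bundle/Gysin comparison with $\bbP^N$, and the Mayer--Vietoris/K\"unneth induction of Lemma~\ref{lem:punconcont}) goes through verbatim with $\rC(X)^{\times(p+1)}$ replaced by $\rC(X_0)\times\cdots\times\rC(X_p)$. Your spelled-out check that the factors being distinct changes nothing is exactly the content of that remark.
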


We shall now extend the connectivity result above to the relative setting. Suppose now that $S$ is a scheme of finite type over a field $k$ and $\cE$ is a vector bundle on $S$. Let $X$ be a closed subscheme of $\bbP(\cE)$. Then, as before, we have a natural embedding $\rJ^{[p]}_{S}(X) \hookrightarrow \bbP(\cE^ {\oplus (p+1)})$. Recall, we have a commutative diagram
\begin{equation} \label{eqn:basechange}
\xymatrix{
 \rJ^{[p]}_{S}(X) \ar[r] \ar[d]^{q_1}  & \bbP(\cE^ {\oplus (p+1)})_{\pi(X)} \ar[d]^{q_2} \\
\pi(X) \ar@{=}[r] & \pi(X).}
\end{equation}
where $\bbP(\cE^ {\oplus (p+1)})_{\pi(X)}$ is canonically isomorphic to $\bbP(\cF^ {\oplus (p+1)})$ with
$\cF := \cE|_{\pi(X)}$.\\

We have the following relative version of Theorem~\ref{thm:connjoin}. We state the proposition for etale cohomology with $\bbZ/\ell$ (or $\bbZ_{\ell}$ or $\bbQ_{\ell}$) coefficients with $\ell$ not equal to the characteristic of $k$. However, as will be clear from the proof, the same result holds in any good cohomology theory. The proof only uses the proper base change theorem and existence of a Leray spectral sequence.

\begin{theorem}\label{thm:relconnjoin}
With notation as above,
the natural map 
\[
\rH^{i}_{et}(\bbP(\cF^ {\oplus (p+1)})) \rightarrow \rH^{i}_{et}(\rJ^{[p]}_{S}(X))
\]
is an isomorphism
for $0 \leq i < p$, and an injection for $i=p$. 
\end{theorem}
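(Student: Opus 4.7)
The plan is to deduce the relative statement from the absolute Theorem~\ref{thm:connjoin} by comparing the two Leray spectral sequences associated to the proper morphisms $q_1: \rJ^{[p]}_S(X) \to \pi(X)$ and $q_2: \bbP(\cF^{\oplus(p+1)}) \to \pi(X)$, with the fiberwise input supplied by proper base change in \'etale cohomology. Here $q_2$ is proper as a projective bundle, and $q_1$ is proper because $\rJ^{[p]}_S(X) \to S$ is projective and factors through the closed immersion $\pi(X) \hookrightarrow S$ established in the preceding lemma. The closed embedding $\iota: \rJ^{[p]}_S(X) \hookrightarrow \bbP(\cF^{\oplus(p+1)})$ over $\pi(X)$ then induces, for $k = 1, 2$, morphisms of first-quadrant convergent spectral sequences
\begin{equation*}
\rE_2^{i,j}(q_k) = \rH^i_{et}(\pi(X), R^j q_{k,*}\bbZ/\ell) \;\Longrightarrow\; \rH^{i+j}_{et}(\text{total space of } q_k),
\end{equation*}
and I would analyze the natural morphism from the $k=2$ tower to the $k=1$ one.

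A standard page-by-page five-lemma argument shows that if the map on $\rE_2$ is an isomorphism for $j < p$ and injective for $j = p$, then the same properties propagate to $\rE_\infty$; combining this with the convergence filtration via a short snake-lemma chase then yields an isomorphism in total degrees $< p$ and an injection in total degree $p$ on the abutments. This reduces the theorem to the sheaf-level claim that $R^j q_{2,*}\bbZ/\ell \to R^j q_{1,*}\bbZ/\ell$ on $\pi(X)$ is an isomorphism for $j < p$ and injective for $j = p$. By proper base change in \'etale cohomology, it suffices to verify this at every geometric point $\bar s \to \pi(X)$, where the stalks are identified with
\begin{equation*}
\rH^j_{et}(\bbP(\cF_{\bar s}^{\oplus(p+1)})) \longrightarrow \rH^j_{et}(\rJ^{[p]}(X_{\bar s})),
\end{equation*}
the right-hand identification using the base-change compatibility of the relative join recorded in item~\ref{itemlabel:properties:3} above. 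Since $\bar s$ lies in $\pi(X)$, the fiber $X_{\bar s} \subset \bbP(\cE_{\bar s})$ is non-empty, so Theorem~\ref{thm:connjoin} applied to $X_{\bar s}$ supplies exactly the desired property on stalks.

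The main obstacle I anticipate is purely bookkeeping: propagating the mixed hypothesis ``isomorphism for $j<p$, injective for $j=p$'' from the $\rE_2$-page to the abutment, and in particular extracting the injectivity in total degree $p$. The only column in which the map may fail to be an outright isomorphism on $\rE_\infty$ is $\rE_\infty^{0,p}$, which sits as a quotient rather than a subspace of the abutment, so the deduction requires a short snake-lemma chase along the filtration $\rH^p \supset F^1 \supset \cdots \supset F^{p+1}=0$ rather than a direct five-lemma. All other ingredients --- properness of $q_1$, proper base change, the base-change compatibility of the relative join, and the absolute Theorem~\ref{thm:connjoin} --- are either formal or explicitly recalled in the preliminaries of the paper.
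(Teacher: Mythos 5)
Your proposal is correct and follows essentially the same route as the paper: a morphism of Leray spectral sequences for $q_1$ and $q_2$, proper base change to identify the stalks of $R^jq_{k,*}$ with the cohomology of the fibers, the base-change compatibility of the join to rewrite $\rJ^{[p]}_S(X)_{\bar s}$ as $\rJ^{[p]}(X_{\bar s})$, and then Theorem~\ref{thm:connjoin} applied fiberwise (the fibers over $\pi(X)$ being non-empty since the scheme-theoretic image agrees with the set-theoretic image for the proper map $\pi$). Your extra care in propagating the ``isomorphism below $p$, injection at $p$'' pattern from the $\rE_2$-page through the convergence filtration is exactly the content the paper compresses into ``a Leray spectral sequence argument now gives the desired result.''
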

\begin{proof}
The commutative diagram \eqref{eqn:basechange} gives rise to a morphism of sheaves
$$R^{i}q_{2,*}(\bbZ/\ell) \rightarrow R^{i}q_{1,*}(\bbZ/\ell).$$ 
Note that this map is an isomorphism on stalks for all $i < p$. To see this, we use the proper base change theorem to compute the stalks. In that case, one has isomorphisms: $$R^{i}q_{1,*}(\bbZ/\ell)_{s} \cong \rH^{i}_{et}(\rJ^{[p]}(X)_{s}) \cong \rH^{i}_{et}(\rJ^{[p]}(X_{s})).$$ The first is a consequence of proper base change, and the second follows from the base change property for joins. Similarly, we have isomorphisms:
$$R^{i}q_{2,*}(\bbZ/\ell) \cong \rH^{i}_{et}(\bbP(\cF^ {\oplus (p+1)})_{s}) \isom \rH^{i}_{et}(\bbP^{(p+1)(n+1) - 1})$$
where $n+1$ is the rank of $\cE$. An application of Theorem~\ref{thm:connjoin} now shows that the above higher direct images are isomorphisms for $j < p$. A Leray spectral sequence argument now gives the desired result.
\end{proof}

\begin{example}
\label{eg:poincare-product}
Suppose $S = \bbP^{m}$ and consider the trivial bundle $\cE$ of rank $n+1$ over $S$. Then $\bbP(\cE) = \bbP^{m} \times \bbP^{n}$. In that case, for $X \subset \bbP(\cE)$, the above result gives an isomorphism 
$$\rH^{j}_{et}(\pi(X) \times \bbP^{(p+1)(n+1) - 1}) \rightarrow \rH^{i}_{et}(\rJ^{[p]}_{S}(X)).$$ 
Here $\rJ^{[p]}_{S}(X) \subset \bbP^{m} \times \bbP^{(p+1)(n+1) - 1}$.
\end{example}

We conclude this section by noting that the proof of Theorem~\ref{thm:relconnjoin} also works in the relative multi-join setting. Let $\cE_i$ ($0 \leq i \leq p$) be vector bundles of rank $r_i$ on $S$. For each $i$, let $X_i$ be a closed subscheme of $\bbP(\cE_i)$. Then, as before, we have a natural embedding $\rJ_{S}(\bbX) \hookrightarrow \bbP(\bigoplus_i \cE_i) = \bbP(\bbE)$ 
(recall that we denote  by $\rJ_S(\bbX)$ the multiple join $\rJ_S(X_0,\cdots,X_p) $)
and a commutative diagram
\begin{equation}\label{eqn:basechange2}
\xymatrix{
 \rJ_{S}(\bbX) \ar[r] \ar[d]^{q_1}  & \bbP(\bbE)_{\pi(\bbX)} \ar[d]^{q_2} \\
\pi(\bbX) \ar@{=}[r] & \pi(\bbX).}
\end{equation}
where $\bbP(\bbE_{\pi(\bbX)}) $ is canonically isomorphic to $\bbP(\bbF)$ with
$\bbF := \bbE|_{\pi(\bbX)}$.\\

\begin{theorem}\label{thm:relconnjoin-multi}
With notation as above,
the natural map 
\[
\rH^{i}_{et}(\bbP(\bbF)) \rightarrow \rH^{i}_{et}(\rJ_{S}(\bbX))
\]
is an isomorphism
for $0 \leq j < p$, and an injection for $j=p$. 
\end{theorem}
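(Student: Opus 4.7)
The plan is to mimic the proof of Theorem~\ref{thm:relconnjoin} almost verbatim, replacing the single-scheme iterated-join input by the multi-join connectivity result Theorem~\ref{thm:connmultijoin}. The commutative diagram \eqref{eqn:basechange} is exactly analogous to the one used in the single-scheme case, so the strategy of comparing the higher direct images $R^{j}q_{1,*}$ and $R^{j}q_{2,*}$ stalk-by-stalk over $\pi(\bbX)$ and then invoking a Leray spectral sequence argument should go through.

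More concretely, the first step is to observe that $q_{1}$ and $q_{2}$ are both proper (since joins of closed subschemes of projective bundles are closed subschemes of a projective bundle over $S$), and that the diagram is cartesian in the sense needed for proper base change. This will let me identify, for any geometric point $s \to \pi(\bbX)$, the stalks
\[
(R^{j}q_{1,*}\bbZ/\ell)_{s} \;\cong\; \rH^{j}_{et}(\rJ_{S}(\bbX)_{s}) \;\cong\; \rH^{j}_{et}\bigl(\rJ(X_{0,s},\dots,X_{p,s})\bigr),
\]
using proper base change for the first isomorphism and the compatibility of the relative join with base change (property~\ref{itemlabel:properties:3} above) for the second. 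Analogously,
\[
(R^{j}q_{2,*}\bbZ/\ell)_{s} \;\cong\; \rH^{j}_{et}\bigl(\bbP(\bbF)_{s}\bigr) \;\cong\; \rH^{j}_{et}\bigl(\bbP^{N_{s}}\bigr),
\]
where $N_{s} = \sum_{i=0}^{p}(r_{i})-1$ is the dimension of the fiber.

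The second step is to apply Theorem~\ref{thm:connmultijoin} fiberwise: the restriction map
\[
\rH^{j}_{et}(\bbP^{N_{s}}) \;\to\; \rH^{j}_{et}\bigl(\rJ(X_{0,s},\dots,X_{p,s})\bigr)
\]
is an isomorphism for $0 \leq j < p$ and injective for $j = p$. This gives a morphism of sheaves $R^{j}q_{2,*}\bbZ/\ell \to R^{j}q_{1,*}\bbZ/\ell$ with the corresponding isomorphism/injectivity behavior on all stalks, hence the same property for the sheaves themselves. Finally, the Leray spectral sequences
\[
\rH^{a}(\pi(\bbX), R^{b}q_{i,*}\bbZ/\ell) \;\Rightarrow\; \rH^{a+b}(\text{total space})
\]
for $i=1,2$ fit into a commutative diagram; an induction on the total degree, using the five lemma, promotes the fiberwise isomorphism/injectivity statement to the global one asserted in the theorem.

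The step I expect to require the most care is verifying that the base-change identifications for $q_{1}$ really yield the multi-join of the fibers $X_{i,s}$, as opposed to some other fibered object; this is where property~\ref{itemlabel:properties:3} must be applied iteratively (or once, in the multi-join formulation) to all $p+1$ factors simultaneously. Once this is in hand, the Leray argument and five-lemma induction are formal. Everything uses only proper base change and the existence of Leray spectral sequences, so, as in Theorem~\ref{thm:relconnjoin}, the result in fact extends beyond $\ell$-adic cohomology to any good cohomology theory.
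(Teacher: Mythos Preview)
Your proposal is correct and follows essentially the same approach as the paper, which simply says to argue as in the proof of Theorem~\ref{thm:relconnjoin} using Theorem~\ref{thm:connmultijoin} in place of Theorem~\ref{thm:connjoin}. Your write-up in fact supplies more detail than the paper does (the explicit stalk identifications via proper base change and the base-change property of joins, and the handling of the injectivity at $j=p$), but the underlying argument is the same.
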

\begin{proof}
We can argue as in the proof of the previous result, given Theorem~\ref{thm:connmultijoin}.

\end{proof}

\subsubsection{A generalization of the cohomological connectivity result}
In this section, we prove analogs of the results of the previous setting where a `higher' cohomological connectivity of the $X$ is assumed. We fix a separably closed base field $k$ as before. Moreover, we only consider etale cohomology with $\bbQ_\ell$-coefficients. \\

In this setting, we have the following analog of the Theorem~\ref{thm:connjoin}. 

\begin{theorem}\label{thm:iteratedhigherconn}
Let $X \subset \bbP^{n}$ be a cohomologically $d$-connected closed subscheme. Then $\rJ^{[p]}(X) \subset \bbP^{(p+1)(n+1)-1}$ is cohomologically $((p+1)d + p)$-connected. In particular, the restriction homomorphism
$$\rH^{i}(\bbP^{(p+1)(n+1)-1},\bbQ_{\ell}) \rightarrow \rH^{i}(\rJ^{[p]}(X),\bbQ_{\ell})$$
is an isomorphism for $0 \leq i < (p+1)d + p$, and an injection
for $i=(p+1)d+p$.
\end{theorem}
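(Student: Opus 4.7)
The strategy is to parallel the proof of Theorem~\ref{thm:connjoin} closely, upgrading the input vanishing (which in that proof came from mere non-emptiness of $X$) to reflect the cohomological $d$-connectedness of $X$.

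\emph{Step 1 (cone equivalence).} A 5-lemma argument applied to the morphism of Gysin sequences (Lemma~\ref{lem:gysin}) associated with the $\bbG_m$-bundles $\rC'(X) \to X$ and $\bbA^{n+1} \setminus 0 \to \bbP^n$ shows that $X \subset \bbP^n$ is cohomologically $d$-connected if and only if $\rH^i(\rC'(X)) = 0$ for all $0 < i < d$. The same argument, applied to the bundles $\rC'(\rJ^{[p]}(X)) \to \rJ^{[p]}(X)$ and $\bbA^{(p+1)(n+1)} \setminus 0 \to \bbP^{(p+1)(n+1)-1}$, reduces the theorem to the cone vanishing
\[
\rH^i(\rC'(\rJ^{[p]}(X))) = 0 \text{ for all } 0 < i < (p+1)d + p.
\]

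\emph{Step 2 (induction on $p$).} I would prove this cone vanishing by induction on $p$, generalizing Lemma~\ref{lem:punconcont}. The base case $p=0$ (with the convention $\rJ^{[0]}(X) = X$) is Step~1 applied to $X$; the case $p = 1$ of the pairwise join is a direct specialization of the inductive argument that follows. For the inductive step, use the same Mayer--Vietoris cover of $\rC'(\rJ^{[p]}(X)) = \rC(X)^{\times (p+1)} \setminus \{o\}$ employed in Lemma~\ref{lem:punconcont}:
\[
U = \rC'(\rJ^{[p-1]}(X)) \times \rC(X), \qquad V = \rC(X)^{\times p} \times \rC'(X),
\]
with intersection $U \cap V = \rC'(\rJ^{[p-1]}(X)) \times \rC'(X)$. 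By Lemma~\ref{lem:conecontract} and Künneth, $\rH^\bullet(U) \cong \rH^\bullet(\rC'(\rJ^{[p-1]}(X)))$ and $\rH^\bullet(V) \cong \rH^\bullet(\rC'(X))$. The inductive hypothesis gives $\rH^i(U) = 0$ for $0 < i < pd+p-1$, while the hypothesis on $X$ gives $\rH^i(V) = 0$ for $0 < i < d$. Applying Künneth to $U \cap V$, the key bookkeeping is that every summand $\rH^r(\rC'(\rJ^{[p-1]}(X))) \otimes \rH^s(\rC'(X))$ with $r,s>0$ and $r+s < (pd+p-1)+d$ vanishes (one of $r,s$ falls below the respective vanishing bound), so in the range $0 < i < (p+1)d+p-1$ the restriction $\rH^i(U) \oplus \rH^i(V) \to \rH^i(U \cap V)$ is an isomorphism onto the two ``pure'' $(r=0)$ and $(s=0)$ summands and has zero kernel. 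Mayer--Vietoris then forces $\rH^i(\rC'(\rJ^{[p]}(X))) = 0$ throughout this range. At the boundary degree $i = (p+1)d+p-1$ the first mixed product summand $\rH^{pd+p-1}(\rC'(\rJ^{[p-1]}(X))) \otimes \rH^d(\rC'(X))$ appears in $\rH^i(U \cap V)$, but the map from $\rH^i(U) \oplus \rH^i(V)$ still has zero kernel, and at degree $i-1$ no mixed summand is yet present, so the cokernel at $i-1$ is still zero; hence by Mayer--Vietoris $\rH^{(p+1)d+p-1}(\rC'(\rJ^{[p]}(X))) = 0$, completing the inductive range $0 < i < (p+1)d+p$.

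\emph{Step 3 (back to the projective side).} Applying the reverse direction of Step~1 --- the 5-lemma to the commutative diagram \eqref{eqn:commdiag} of Gysin sequences, with the bound $(p+1)d+p$ in place of $p$ --- the cone vanishing converts to the asserted isomorphisms $\rH^j(\bbP^{(p+1)(n+1)-1}) \to \rH^j(\rJ^{[p]}(X))$ for $0 \leq j < (p+1)d+p$ and the injectivity at $j=(p+1)d+p$.

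\emph{Main obstacle.} The technical heart of the argument is the Künneth bookkeeping in Step~2: one must verify carefully that the first mixed product summand occurs only at the boundary degree $(p+1)d+p-1$, and that even at that degree the Mayer--Vietoris analysis forces vanishing via the vanishing of the cokernel one degree lower. Over a field of coefficients this runs cleanly; with integral or $\bbZ_\ell$-adic coefficients the $\mathrm{Tor}_1$ summands in the Künneth short exact sequence for $U \cap V$ require additional care, as they can in principle propagate, via the Mayer--Vietoris connecting homomorphism, to a potentially nontrivial class in the cohomology of $\rC'(\rJ^{[p]}(X))$ at the boundary degree.
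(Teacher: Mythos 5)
Your proposal follows essentially the same route as the paper: the Gysin comparison plus five lemma reducing connectivity to punctured-cone vanishing (your Steps 1 and 3, which are Lemma~\ref{lem:conehigher} and the second proof of Theorem~\ref{thm:connjoin}), and the Mayer--Vietoris/K\"unneth induction on $p$ generalizing Lemma~\ref{lem:punconcont} (your Step 2, which is the paper's Lemma~\ref{lem:conehigherconn}, where the paper writes out $p=1$ and refers to Lemma~\ref{lem:punconcont} for the rest). The $\mathrm{Tor}_1$ subtlety you flag at the boundary degree is present in the paper's own write-up as well and is vacuous in the field-coefficient (Weil cohomology) setting in which the theorem is stated.
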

\begin{proof}
One can use the Gysin sequence, as in the second proof of Theorem~\ref{thm:connjoin}, given Lemma~\ref{lem:conehigherconn} below.

\end{proof}

\begin{remark}
 
\begin{enumerate}
\item The weak Lefschetz theorem states that any smooth complete intersection $X$ in $\bbP^{n}$ is cohomologically $(\dim(X) -1)$-connected.
\item The Barth-Larsen theorem \cite{Barth-Larsen} 
(and its generalization due to Ogus \cite{Ogus}, Hartshorne-Speiser \cite{Hartshorne-Speiser}) 
states that any local complete intersection projective variety $X \subset \bbP^{n}$ of dimension $r$ is $(2r - n)$-cohomologically connected.
\item We note that, even if $X$ is smooth, the iterated join will generally be not smooth. In particular, neither the weak Lefschetz nor the Barth-Larsen theorem apply in order to obtain cohomological connectivity results for the join.
\item On the other hand, we obtain many examples of $X$ satisfying 
the hypothesis of  Theorem~\ref{thm:iteratedhigherconn} by applying the previous remark in either the weak Lefschetz or Barth-Larsen settings.
\end{enumerate}
\end{remark}

\begin{lemma}\label{lem:conehigherconn}
Let $X \subset \bbP^{n}$ be a cohomologically $d$-connected closed subscheme. Then one has the following vanishing for the punctured cone:
$$\rH^{i}(C'(\rJ^{[p]}(X)),\bbQ_{\ell}) = 0 \text{ for all } 0 < i < (p+1)d + p.$$
If $i = 0$, then $\rH^{0}(C'(\rJ^{[p]}(X))) = \rH^{0}(o)$.

\end{lemma}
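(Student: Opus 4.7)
My plan is to mirror the two-step strategy of Lemma~\ref{lem:punconcont} — use the Gysin sequence to handle a single punctured cone, then propagate by Mayer--Vietoris — but upgrade every numerical bound to reflect the $d$-connectedness of $X$.

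First I would establish a base case $\HH^i(C'(X)) = 0$ for $0 < i < d$. Applying Lemma~\ref{lem:gysin} to the two $\bbG_m$-bundles in the Cartesian diagram~\eqref{eqn:punctcone} produces a commutative ladder of Gysin long exact sequences relating $\HH^*(C'(X))$ to $\HH^*(X)$ on the one hand, and $\HH^*(\bbA^{n+1} \setminus 0)$ to $\HH^*(\bbP^n)$ on the other. The vertical restriction maps $\HH^j(\bbP^n) \to \HH^j(X)$ are isomorphisms for $j < d$ and injective for $j = d$ by the $d$-connectedness hypothesis, so the five lemma forces $\HH^i(\bbA^{n+1} \setminus 0) \to \HH^i(C'(X))$ to be an isomorphism for $i < d$. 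Since the source vanishes in this range, so does the target.

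Next I would induct on $p$, imitating the proof of Lemma~\ref{lem:punconcont}. Writing $W_p := C(X)^{p+1} \setminus o$, take the Mayer--Vietoris cover $U = W_{p-1} \times C(X)$ and $V = C(X)^p \times C'(X)$, so that $U \cup V = W_p$ and $U \cap V = W_{p-1} \times C'(X)$. Because $\HH^*(C(X)) \cong \HH^*(o)$ by Lemma~\ref{lem:conecontract}, the Künneth formula yields $\HH^i(U) \cong \HH^i(W_{p-1})$, $\HH^i(V) \cong \HH^i(C'(X))$, and
\[
\HH^i(U \cap V) \;\cong\; \bigoplus_{r+s=i} \HH^r(W_{p-1}) \otimes \HH^s(C'(X))
\]
(plus Tor, discussed below). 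The inductive hypothesis gives $\HH^r(W_{p-1}) = 0$ for $0 < r < pd + p - 1$, and the base case gives $\HH^s(C'(X)) = 0$ for $0 < s < d$.

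The core calculation is to show the Mayer--Vietoris restriction $\HH^i(U) \oplus \HH^i(V) \to \HH^i(U \cap V)$ is injective for every $i > 0$ and surjective in degree $i-1$ whenever $i < (p+1)d + p$. Tracing through Künneth, this map lands in the two ``axis'' summands $\HH^i(W_{p-1}) \otimes \HH^0(C'(X))$ and $\HH^0(W_{p-1}) \otimes \HH^i(C'(X))$, which are disjoint in the Künneth decomposition when $i > 0$; injectivity is then immediate. For surjectivity in degree $j$ I need every ``mixed'' summand $\HH^r(W_{p-1}) \otimes \HH^s(C'(X))$ with $r, s > 0$ to vanish, and by the inductive hypothesis together with the base case such a summand forces $r \geq pd + p - 1$ and $s \geq d$, hence $j \geq (p+1)d + p - 1$. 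Combining surjectivity in degree $i-1$ with injectivity in degree $i$, the Mayer--Vietoris long exact sequence yields $\HH^i(W_p) = 0$ for $0 < i < (p+1)d + p$; the statement $\HH^0(W_p) = \HH^0(o)$ follows from the connectedness of $W_p$ exactly as in Step~1 of the proof of Lemma~\ref{lem:punconcont}. The main obstacle I expect is bookkeeping the Künneth Tor terms when working with integral or $\bbZ_\ell$ coefficients, but the same numerical analysis confines them to degrees $\geq (p+1)d + p - 1$ (they require both tensor factors to carry torsion, forcing $r, s > 0$), so they do not affect the surjectivity argument at $j = i-1 < (p+1)d + p - 1$.
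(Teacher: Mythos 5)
Your proposal is correct and takes essentially the same route as the paper: your base case is exactly the paper's Lemma~\ref{lem:conehigher} (the Gysin ladder for the punctured cone compared with $\bbA^{n+1}\setminus 0$ plus the five lemma and $d$-connectedness), and your inductive step is the Mayer--Vietoris/K\"unneth argument of Lemma~\ref{lem:punconcont} with the upgraded vanishing ranges, of which the paper only writes out the case $p=1$ and defers the rest. One bookkeeping quibble: since the Tor part of $\HH^{j}(U\cap V)$ is indexed by $r+s=j+1$, a nonzero Tor term can already appear in cohomological degree $(p+1)d+p-2$ rather than only in degrees $\geq (p+1)d+p-1$, so the surjectivity at the top step needs field coefficients (the Weil cohomology setting, where Tor vanishes identically) or torsion-freeness; this is harmless in the intended setting and the paper's own proof elides the identical point.
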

\begin{proof}
For simplicity, we drop the coefficients $\bbQ_\ell$ from the notation. One can argue as in the proof of Lemma~\ref{lem:punconcont}. We will show the main case of $p=1$, which follows from Lemma~\ref{lem:conehigher} below. The rest of the proof then proceeds exactly in the proof of Lemma~\ref{lem:punconcont}. So we suppose that $p=1$.\\

As before, we are interested in $\rC'(\rJ(X,X)) = (\rC(X) \times \rC(X)) \setminus o$. Let $U = \rC'(X) \times \rC(X)$ and $V = \rC(X) \times \rC'(X)$. Note that $U \cup V = \rC'(\rJ(X,X))$, and $U \cap V = 
\rC'(X)\times \rC'(X)$. By the K\"{u}nneth exact sequence, $\rH^{m}(U) = \rH^{m}(C'(X))$ and $\rH^{m}(V) = \rH^{m}(C'(X))$ for all $m$. In particular, both groups vanish for $0 < m < d$, and are given by the coefficients $R$ in degree $0$. 
The K\"{u}nneth formula applied to $U \cap V$ gives and isomorphism:
{\small
$$
 \bigoplus_{i+j =m} \rH^{i}(\rC'(X)) \otimes \rH^{j} \rC'(X)) \xrightarrow{\cong} \rH^{m}(U\cap V).
$$
}

If $m< 2d$, then by the previous remarks, the left term is equal to $\rH^{m}(U) \oplus \rH^{m}(V)$. In particular, an application of Mayer-Vietoris proves the desired result for $m < 2d$. In degree
$m = 2d$, one obtains a short exact sequence:
$$0 \rightarrow \rH^{2d}(U \cup V) \rightarrow \rH^{2d}(U) \oplus \rH^{2d}(V) \rightarrow \rH^{2d}(U\cap V).$$
The right arrow is injective by the previous remarks. This completes the proof in the case $p=1$. \\
We briefly discuss the case of $p > 1$. 
\label{itemlabel:lem:punconcont:3}
Suppose the result is known for all $m < p$. We need show that $\rH^{i}(\rC'(\rJ^{[p]}(X)) = 0$ for all $0 < i < (p+1)d+p$. Let 
$U = \rC'(\rJ^{[p-1]}(X)) \times C(X)$ and $V = \rC(X)^{\times p} \times \rC'(X)$. Note that $\{U,V\}$ is an open cover of $\rC'(\rJ^{[p]}(X))$ and 
$U \cap V = \rC'(\rJ^{[p-1]}(X)) \times \rC'(X)$. By an application of the
K\"{u}nneth formula:
\begin{enumerate}[(a)]
\item
$\rH^{i}(U) = 0$ for all $0 < i < dp + (p-1)$,
\item
$\rH^{i}(V) = \rH^{i}(\rC'(X))$ for all $i \geq 0$,
\item
$\rH^{i}(U \cap V) \cong \oplus_{r+s = i} \rH^{r}(U) \otimes \rH^{r}(\rC'(X))$ for all $i \geq 0$.
\end{enumerate}
Moreover, by Lemma \ref{lem:conehigher}, $\rH^i(V) = 0$ for all $0 < i < d$. As before, using these facts and Mayer-Vietoris gives the desired vanishing. 
\end{proof}

\begin{lemma}\label{lem:conehigher}
Let $X \subset \bbP^{n}$ be a cohomologically $d$-connected closed subscheme. Then one has the following vanishing for the punctured cone:
$$\rH^{i}(C'(X)) = 0 \text{ for all } 0 < i < d.$$
In degree $0$, $\HH^0(C'(X)) \cong R$ (where $R$ is the ring of coefficients).
\end{lemma}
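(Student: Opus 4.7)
The plan is to mimic the Gysin-sequence proof of Theorem~\ref{thm:connjoin} (the second proof) but applied directly to $X$ and $\bbP^n$ rather than to the iterated join. First I would observe, exactly as in the proof of Lemma~\ref{lem:conecontract}, that the blow-up of $\rC(X)$ at the cone point $o_X$ realizes $\rC'(X)$ as the complement of the zero section of the line bundle $Y \to X$ associated to the restriction $\cO_{\bbP^n}(-1)|_X$; analogously, $\bbA^{n+1} \setminus \{0\}$ is the complement of the zero section of the tautological line bundle $\bbV(\cO_{\bbP^n}(-1)) \to \bbP^n$, and $Y$ is the restriction of this bundle along the closed immersion $X \hookrightarrow \bbP^n$.

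Next I would apply Lemma~\ref{lem:gysin} simultaneously to both line bundles, together with the inclusion $X \hookrightarrow \bbP^n$, to obtain (after using the fact that a vector bundle has the same cohomology as its base to identify $\HH^{\ast}(Y) \cong \HH^{\ast}(X)$ and $\HH^{\ast}(\bbV(\cO_{\bbP^n}(-1))) \cong \HH^{\ast}(\bbP^n)$) a commutative diagram of long exact sequences:
\[
\begin{tikzcd}[column sep=small]
\cdots \ar[r] & \HH^{i-2}(X)(-1) \ar[r] & \HH^{i}(X) \ar[r] & \HH^{i}(\rC'(X)) \ar[r] & \HH^{i-1}(X)(-1) \ar[r] & \cdots \\
\cdots \ar[r] & \HH^{i-2}(\bbP^n)(-1) \ar[u] \ar[r] & \HH^{i}(\bbP^n) \ar[u] \ar[r] & \HH^{i}(\bbA^{n+1}\setminus\{0\}) \ar[u] \ar[r] & \HH^{i-1}(\bbP^n)(-1) \ar[u] \ar[r] & \cdots
\end{tikzcd}
\]

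The cohomological $d$-connectedness hypothesis says precisely that the vertical restriction maps $\HH^{j}(\bbP^n) \to \HH^{j}(X)$ are isomorphisms for $j < d$ and injective for $j = d$, and the same holds for the Tate twisted versions. Combined with the standard computation $\HH^{0}(\bbA^{n+1}\setminus\{0\}) \cong R$ and $\HH^{i}(\bbA^{n+1}\setminus\{0\}) = 0$ for $0 < i < 2n+1$, a standard five-lemma argument applied to successive five-term segments of the above diagram yields $\HH^{i}(\rC'(X)) \cong \HH^{i}(\bbA^{n+1}\setminus\{0\})$ for $0 \leq i < d$, giving both the vanishing statement and the degree-zero isomorphism simultaneously.

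The whole argument is a routine bookkeeping exercise in the five lemma once the Gysin sequences and the comparison map are in place. The only subtlety is that at the boundary degree $i = d - 1$, the rightmost vertical map in the relevant five-term segment is only injective rather than an isomorphism; but this is exactly the hypothesis required by the refined five lemma (isomorphisms in the inner four positions can be weakened to an injection on the right), so no additional input is needed.
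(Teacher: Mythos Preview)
Your proposal is correct and follows essentially the same approach as the paper: both set up the line bundle $Y \to X$ via the blow-up of the cone, apply the Gysin sequence (Lemma~\ref{lem:gysin}) to $X$ and to $\bbP^n$ in parallel to obtain the same commutative ladder of long exact sequences, and conclude by the five lemma. Your added remark about the boundary case $i = d-1$ (where the rightmost vertical map is only injective) is a helpful clarification of a detail the paper leaves implicit.
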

\begin{proof}
Let $Y \rightarrow X$ be the line bundle as in the proof of Lemma~\ref{lem:conecontract}. We can now apply Lemma~\ref{lem:gysin}, and argue as in the `alternate' proof to Theorem~\ref{thm:connjoin} to get a commutative diagram of long exact sequences:
$$
\begin{tikzcd}[row sep=large, column sep=small]
\cdots \ar[r] & \rH^{i-2}(X)(-1) \ar[r]  & \rH^{i}(X)
\ar[r] & \rH^i(C'(X)) \ar[r] & \rH^{i-1}(X)(-1)\ar[r]&\cdots \\
\cdots \ar[r]& \rH^{i-2}(\bbP^{n})(-1)\ar[u]  \ar[r]& \rH^{i}(\bbP^{n}) \ar[r] \ar[u] & \rH^i(\bbA^{n+1} \setminus 0) \ar[u] \ar[r] & \rH^{i-1}(\bbP^{n})(-1) \ar[u]\ar[r]&\cdots
\end{tikzcd}
$$
The result now follows by induction and the five lemma.
\end{proof}

We note that the previous result can also be adapted to the setting of multi-joins and also the relative setting. Here we only state the result in the multi-join setting, and 
leave the 
proof to the reader.

\begin{theorem} \label{thm:highermultijoinconn}
Let for $0 \leq i \leq p$, $X_i \subset \bbP^{n_i}$ be closed cohomologically $d_i$-connected subschemes. 
Then $\rJ(\bbX) \subset \bbP^{N}$ is cohomologically $(d + p)$-connected, where 
$d = \sum_{i=0}^{p}d_i$ and $N = \sum_{i=0}^{p} (n_i+1) -1$.   
\end{theorem}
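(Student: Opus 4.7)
The plan is to adapt the two-step strategy already used for Theorem~\ref{thm:iteratedhigherconn} (and, in the uniform case $d_i = 0$, for Theorem~\ref{thm:connmultijoin}): first establish a punctured-cone vanishing statement, then transport it to the desired comparison via the Gysin long exact sequence.

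\emph{Step 1 (punctured cone vanishing).} I will show by induction on $p$ that
\[
\rH^m(\rC'(\rJ(\bbX))) = 0 \text{ for } 0 < m < d + p, \qquad \rH^0(\rC'(\rJ(\bbX))) = R,
\]
where $R$ is the coefficient ring. The base case $p = 0$ is Lemma~\ref{lem:conehigher} applied to $X_0$. For the inductive step, write $J' = \rJ(X_0, \ldots, X_{p-1})$, so that $\rC(\rJ(\bbX)) \cong \rC(J') \times \rC(X_p)$, and cover the punctured cone by $U = \rC'(J') \times \rC(X_p)$ and $V = \rC(J') \times \rC'(X_p)$, with $U \cap V = \rC'(J') \times \rC'(X_p)$. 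Lemma~\ref{lem:conecontract} together with the K\"unneth formula give $\rH^*(U) \cong \rH^*(\rC'(J'))$ and $\rH^*(V) \cong \rH^*(\rC'(X_p))$. By the inductive hypothesis, $\rH^i(\rC'(J')) = 0$ for $0 < i < d_{<p} + (p-1)$ (with $d_{<p} := d_0 + \cdots + d_{p-1}$); by Lemma~\ref{lem:conehigher}, $\rH^j(\rC'(X_p)) = 0$ for $0 < j < d_p$. The K\"unneth analysis of $\rH^*(U \cap V)$ combined with Mayer-Vietoris then proceeds exactly as in the proofs of Lemmas~\ref{lem:punconcont} and~\ref{lem:conehigherconn}: any interior K\"unneth summand or $\operatorname{Tor}_1$ contribution would force the sum of its two indices to be at least $(d_{<p}+p-1) + d_p = d + p - 1$, so in the relevant degree range $\rH^{m-1}(U \cap V)$ is concentrated in the two edge summands $\rH^{m-1}(\rC'(J')) \oplus \rH^{m-1}(\rC'(X_p))$, and the restriction map from $\rH^{m-1}(U) \oplus \rH^{m-1}(V)$ is surjective. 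Since the restriction $\rH^m(U) \oplus \rH^m(V) \to \rH^m(U \cap V)$ is injective for $m > 0$ (its image lies in two distinct edge summands), Mayer-Vietoris yields the claimed vanishing.

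\emph{Step 2 (Gysin comparison).} To conclude the cohomological $(d+p)$-connectivity of $\rJ(\bbX) \subset \bbP^N$, apply Lemma~\ref{lem:gysin} to the tautological line bundle $Y'' \to \bbP^N$ and its restriction $Y' \to \rJ(\bbX)$, exactly as in the alternate proof of Theorem~\ref{thm:connjoin}. The resulting commutative diagram of Gysin long exact sequences relates $\rH^*(\bbP^N)$, $\rH^*(\rJ(\bbX))$, $\rH^*(\bbA^{N+1} \setminus 0)$, and $\rH^*(\rC'(\rJ(\bbX)))$. Since $\rH^i(\bbA^{N+1} \setminus 0) = 0$ for $0 < i < 2N + 1$, together with Step~1 which gives $\rH^i(\rC'(\rJ(\bbX))) = 0$ for $0 < i < d + p$, an induction on $i$ with the five lemma shows that the restriction $\rH^i(\bbP^N) \to \rH^i(\rJ(\bbX))$ is an isomorphism for $0 \leq i < d + p$ and an injection for $i = d + p$.

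\emph{Main obstacle.} The only delicate point is the K\"unneth bookkeeping in Step~1: with factor-dependent connectivity parameters $d_i$, one must verify carefully that in each relevant degree the interior and $\operatorname{Tor}_1$ terms in the K\"unneth decomposition of $\rH^*(U \cap V)$ really do vanish. The numerical identity $(d_{<p}+p-1) + d_p = d + p - 1$ is exactly the inequality that makes this work, and the remaining structure of the argument is then formally identical to the single-factor treatment in Lemma~\ref{lem:conehigherconn}.
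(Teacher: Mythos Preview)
Your proposal is correct and is exactly the adaptation the paper has in mind: the paper does not write out a proof of Theorem~\ref{thm:highermultijoinconn} but explicitly leaves it to the reader as the multi-join analogue of Theorem~\ref{thm:iteratedhigherconn}, and your two-step argument (punctured-cone vanishing via Mayer--Vietoris/K\"unneth as in Lemmas~\ref{lem:punconcont} and~\ref{lem:conehigherconn}, followed by the Gysin comparison as in the alternate proof of Theorem~\ref{thm:connjoin}) is precisely that adaptation. Your identification of the numerical identity $(d_{<p}+p-1)+d_p = d+p-1$ as the crux of the K\"unneth bookkeeping is also on point.
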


\subsection{Cohomological connectivity over non-algebraically closed fields}
\label{subsec:non-acf}
We discuss the case where $k$ is possibly a non-algebraically closed field. Let $\bar{k}$ denote a fixed separable closure of $k$, and $G$ denote the corresponding Galois group. For $X/k$, we denote by $X_{\bar{k}}$ its base change to $\bar{k}$. We fix a prime $\ell \neq  \Char(k)$, and let $\rH^{i}(X)$ denote the etale cohomology with $\bbQ_{\ell}$-coefficients. Note that there is a natural continuous action of $G$ on $\rH^{i}(X_{\bar{k}})$.\\

The results of the previous sections give the following natural connectivity of the join with Galois action.

\begin{corollary}
\label{cor:relconnjoin}
Let $X_i \subset \bbP_k^{n_i}$ ($0 \leq i \leq p$) be closed cohomologically $d_i$-connected subschemes. Then $\rJ(\bbX)_{\bar{k}} \subset 
\bbP^{N}_{\bar{k}}$ is cohomologically $(d+p)$-connected, where $d = \sum_{i=0}^{p}d_i$ and $N = (\sum_{i=0}^{p}n_i+1) -1$. In particular,
\begin{equation}
\rH^{j}(\bbP^{N}_{\bar{k}}) \rightarrow \rH^{i}(\rJ(\bbX)_{\bar{k}})
\end{equation}
is an isomorphism of Galois modules for $0 \leq j <d+p$,  and injective for $j =d+p$.
\end{corollary}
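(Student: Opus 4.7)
The plan is to reduce the statement over $k$ to the already-established statement over an algebraically closed field (namely $\bar k$) and then to verify that the resulting isomorphism carries the natural $G$-action.

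First I would use the base change compatibility of the multi-join construction (property~\ref{itemlabel:properties:3} in Section~2.1): applied to $\Spec(\bar k) \to \Spec(k)$, this gives a canonical identification $\rJ(\bbX)_{\bar k} \cong \rJ(\bbX_{\bar k})$ of closed subschemes of $\bbP^N_{\bar k}$. Next, by smooth (in fact flat) base change for $\ell$-adic \'etale cohomology with $\ell \ne \Char(k)$, the cohomological $d_i$-connectedness of $X_i \subset \bbP^{n_i}_k$ passes to $X_{i,\bar k} \subset \bbP^{n_i}_{\bar k}$: the groups $\HH^j_{\mathrm{et}}(\bbP^{n_i}_k, X_i)$ and $\HH^j_{\mathrm{et}}(\bbP^{n_i}_{\bar k}, X_{i,\bar k})$ coincide (with $\bbQ_\ell$-coefficients), so in particular the latter vanishes for $j \le d_i$.

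At this point I am in the hypotheses of Theorem~\ref{thm:highermultijoinconn} over the algebraically closed field $\bar k$: the closed subschemes $X_{i,\bar k} \subset \bbP^{n_i}_{\bar k}$ are cohomologically $d_i$-connected, and $\rJ(\bbX_{\bar k}) \subset \bbP^N_{\bar k}$. Applying that theorem gives that $\rJ(\bbX_{\bar k}) = \rJ(\bbX)_{\bar k}$ is cohomologically $(d+p)$-connected, i.e. the restriction map $\HH^j(\bbP^N_{\bar k}) \to \HH^j(\rJ(\bbX)_{\bar k})$ is an isomorphism for $0 \le j < d+p$ and injective for $j = d+p$.

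It remains to observe the Galois equivariance, which is really a formal point. The inclusion $\iota:\rJ(\bbX) \hookrightarrow \bbP^N$ is defined over $k$, and the base change $\iota_{\bar k}:\rJ(\bbX)_{\bar k} \hookrightarrow \bbP^N_{\bar k}$ is equivariant for the natural $G$-actions induced from descent data. By functoriality of \'etale cohomology, the pullback $\iota_{\bar k}^*$ is a morphism of continuous $G$-modules, and this is precisely the map whose bijectivity/injectivity was established in the previous paragraph. I do not anticipate any genuine obstacle here; the only real content is the input Theorem~\ref{thm:highermultijoinconn} and the base change compatibility of the join, and the passage from $k$ to $\bar k$ is just bookkeeping with functoriality of \'etale cohomology.
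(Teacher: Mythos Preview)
Your approach is essentially the paper's: identify $\rJ(\bbX)_{\bar k}$ with $\rJ(\bbX_{\bar k})$ via base-change compatibility of the join, invoke Theorem~\ref{thm:highermultijoinconn} over $\bar k$, and note that Galois equivariance is automatic from functoriality since the inclusion $\rJ(\bbX)\hookrightarrow\bbP^N$ is defined over $k$. The paper's two-line proof records exactly these points.

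One caveat: your claim that ``smooth (in fact flat) base change'' identifies $\HH^j_{\mathrm{et}}(\bbP^{n_i}_k, X_i)$ with $\HH^j_{\mathrm{et}}(\bbP^{n_i}_{\bar k}, X_{i,\bar k})$ is not correct as stated. For a $k$-scheme $Y$ the groups $\HH^j_{\mathrm{et}}(Y,\bbQ_\ell)$ and $\HH^j_{\mathrm{et}}(Y_{\bar k},\bbQ_\ell)$ are related by the Hochschild--Serre spectral sequence, not by an isomorphism; already $\HH^j_{\mathrm{et}}(\Spec k,\bbQ_\ell)=\HH^j(G,\bbQ_\ell)$ need not vanish for $j>0$. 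The paper sidesteps this entirely: the notion ``cohomologically $d_i$-connected'' was only introduced over an algebraically closed base, so the hypothesis here is most naturally read as a condition on $X_{i,\bar k}$, and no transfer step is needed. If you did want to start from a connectedness hypothesis genuinely over $k$, the passage would require a Hochschild--Serre argument (going in the opposite direction from the one in the \emph{next} corollary), not smooth base change.
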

\begin{proof}
This is a direct consequence of the functoriality of the restriction map, and the fact that the join construction is compatible with base extension. More precisely, $\rJ(\bbX_{\bar{k}}) = \rJ(\bbX)_{\bar{k}}$.
\end{proof}

In this setting, one has the usual Hochschild-Serre spectral sequence:
$$
\rE_2^{i,j} : = \rH^{i}(G,\rH^{j}(X_{\bar{k}})) \Rightarrow \rH^{i+j}(X)
$$
where $\rH^{i}(G,\rH^{j}(X_{\bar{k}}))$ is the Galois cohomology of $G = \Gal(\bar{k}/k)$ with coefficients in the Galois module $\rH^{j}(X_{\bar{k}})$.

\begin{corollary}
With notation and assumptions as in the previous corollary,
the subscheme $\rJ(\bbX) \subset \bbP^{N}$ 
is cohomologically $(d+p)$-connected.
\end{corollary}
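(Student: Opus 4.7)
The plan is to reduce the statement over $k$ to the statement over $\bar{k}$ (which is supplied by Corollary~\ref{cor:relconnjoin}) by means of a Hochschild--Serre spectral sequence applied to relative etale cohomology. The key observation is that cohomological $(d+p)$-connectedness is defined by the \emph{vanishing} of relative cohomology, and such vanishing descends from $\bar{k}$ to $k$ in a completely formal way.

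First, I would rephrase the target: by definition, cohomological $(d+p)$-connectedness of $\rJ(\bbX) \subset \bbP^{N}$ is the vanishing $\rH^{j}(\bbP^{N}, \rJ(\bbX)) = 0$ for all $j \leq d+p$, where
\[
\rH^{j}(\bbP^{N}, \rJ(\bbX)) := \rH^{j}(\bbP^{N},\ \jmath_{!}\bbQ_{\ell}|_{U})
\]
with $U := \bbP^{N} \setminus \rJ(\bbX)$ and $\jmath : U \hookrightarrow \bbP^{N}$ the open complement. The sheaf $\jmath_{!}\bbQ_{\ell}|_{U}$ is constructible, and its formation commutes with the base change $\Spec(\bar{k}) \to \Spec(k)$. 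Applying Corollary~\ref{cor:relconnjoin} to $\rJ(\bbX)_{\bar{k}} \subset \bbP^{N}_{\bar{k}}$ therefore yields $\rH^{j}(\bbP^{N}_{\bar{k}}, \rJ(\bbX)_{\bar{k}}) = 0$ for all $j \leq d+p$ as $G$-modules.

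Next, I would invoke the Hochschild--Serre spectral sequence for the constructible sheaf $\jmath_{!}\bbQ_{\ell}|_{U}$ on $\bbP^{N}$:
\[
E_{2}^{i,j} = \rH^{i}\bigl(G,\ \rH^{j}(\bbP^{N}_{\bar{k}}, \rJ(\bbX)_{\bar{k}})\bigr) \Longrightarrow \rH^{i+j}(\bbP^{N}, \rJ(\bbX)).
\]
By the vanishing just established, the entire strip $\{j \leq d+p\}$ of the $E_{2}$-page is zero. For any $k \leq d+p$ and any $i \geq 0$, the bidegree $(i, k-i)$ satisfies $k - i \leq k \leq d+p$, so the associated $E_{2}$-term, and hence the $E_{\infty}$-term, vanishes. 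By convergence, $\rH^{k}(\bbP^{N}, \rJ(\bbX)) = 0$ for all $k \leq d+p$, which is precisely the cohomological $(d+p)$-connectedness of $\rJ(\bbX) \subset \bbP^{N}$.

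The argument is essentially formal once the reformulation in terms of relative cohomology is in place, so I do not expect a serious obstacle. The only mild technical point is to justify the Hochschild--Serre spectral sequence with constructible (rather than constant) coefficients; this follows from the Leray spectral sequence for the structure morphism $\bbP^{N} \to \Spec(k)$ combined with Galois descent $\Spec(\bar{k}) \to \Spec(k)$, or equivalently from the factorization $R\Gamma(\bbP^{N}, -) = R\Gamma(\Spec(k), -) \circ R\pi_{*}$ applied to the constructible complex $\jmath_{!}\bbQ_{\ell}|_{U}$.
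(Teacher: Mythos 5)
Your proof is correct, and it follows the same basic strategy as the paper: descend the connectivity statement from $\bar{k}$ to $k$ via a Hochschild--Serre spectral sequence, with Corollary~\ref{cor:relconnjoin} as the geometric input. The one genuine difference is where the spectral sequence is applied. The paper runs two Hochschild--Serre spectral sequences with constant coefficients, one for $\bbP^N$ and one for $\rJ(\bbX)$, and compares them: the $E_2$-terms agree for $j < d+p$, hence so do the abutments in low degrees. That argument, as written, delivers the isomorphisms on $\rH^m$ for $m < d+p$ cleanly, but extracting the injectivity (equivalently, the vanishing of the relative group in degree exactly $d+p$) from a map of spectral sequences requires a little extra care at the boundary, which the paper glosses over. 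You instead apply a single Hochschild--Serre spectral sequence to the relative cohomology itself, i.e.\ to the sheaf $\jmath_!\bbQ_\ell$ on $\bbP^N$, where the input from Corollary~\ref{cor:relconnjoin} is a \emph{vanishing} statement in all degrees $j \leq d+p$; vanishing propagates through the spectral sequence trivially, so the borderline degree $d+p$ costs nothing. The price is that you must invoke Hochschild--Serre (or Leray for $\bbP^N \to \Spec k$) with non-constant constructible coefficients and note that $\jmath_!$ commutes with the base change to $\bar{k}$ (using $\rJ(\bbX_{\bar k}) = \rJ(\bbX)_{\bar k}$), both of which you address; the usual caveats about $\bbQ_\ell$-coefficients and continuous Galois cohomology apply equally to your argument and to the paper's.
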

\begin{proof}
One has a commutative diagram of spectral sequences:
$$
\xymatrix{
 \rE_2^{i,j}(\bbX) : = \rH^{i}(G,\rH^{j}(\rJ(\bbX)_{\bar{k}}))\ar@2[r] & \rH^{i+j}(\rJ(\bbX)) \\
\rE_2^{i,j}(\bbP^{N}):=\rH^{i}(G,\rH^j(\bbP^{N}_{\bar{k}})) \ar@2[r] \ar[u] & \rH^{i+j}(\bbP^{N}) \ar[u] }.
$$
By the previous corollary, the $\rE_{2}^{i,j}$-terms are isomorphic for $0 \leq j <d+p$, and therefore also on the corresponding $\rE_{\infty}$ terms.
\end{proof} 

\subsection{Cohomological connectivity and Poincar\'e polynomials}
\label{subsec:poincare}

We now prove a key inequality relating the Poincar\'e polynomial of a closed subscheme $X \subset \bbP^m \times \bbP^n$, with that of $\pi(X)$ where $\pi: \bbP^m \times \bbP^n \rightarrow \bbP^m$ is the projection morphism.

\begin{definition}[Poincar\'e Polynomial]\label{defn:poincarepoly}
\label{def:poincare}
For any scheme of finite type $X$ over a field $k$,
we will denote
\[
P(X):=\sum_{i} 
b_{i}(X)T^{i} \in \bbZ[T],
\]
where $b_{i}(X) : = \dim_{\bbQ_\ell}(\rH^{i}(X, \bbQ_\ell))$,
and $\ell$ is a prime not equal to the characteristic of the field $k$.
\end{definition}

We have the following direct consequence of Theorem~\ref{thm:relconnjoin}.

\begin{theorem}
\label{thm:poincare}
With notation as in Theorem~\ref{thm:relconnjoin}, let $S =  \bbP^{m}$, 
$X \subset \bbP^{n} \times \bbP^{m}$, and $\pi: \bbP^{n} \times \bbP^{m} \to \bbP^{m}$ the projection morphism.
Then,
\begin{eqnarray*}
P({[\rJ^{[p]}_{S}(X)}) &\equiv&  P({\pi(X)}) (1+T^{2}+ T^{4} +\cdots + T^{2((p+1)(n+1)-1)}) \  {\rm mod}  \ T^{p} .
\end{eqnarray*}
\end{theorem}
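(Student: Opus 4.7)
The plan is to deduce the congruence directly from Theorem~\ref{thm:relconnjoin} combined with the K\"unneth formula, using only that projective space has especially simple cohomology.

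First, I specialize Theorem~\ref{thm:relconnjoin} to the present setting. Take $S = \bbP^m$ and let $\cE$ be the trivial bundle of rank $n+1$ on $S$, so that $\bbP(\cE) = \bbP^m \times \bbP^n$ and $X \subset \bbP(\cE)$. The restricted bundle $\cF = \cE|_{\pi(X)}$ is then trivial of rank $n+1$ on $\pi(X)$, so
\[
\bbP(\cF^{\oplus(p+1)}) \;\isom\; \pi(X) \times \bbP^{(p+1)(n+1)-1}.
\]
Theorem~\ref{thm:relconnjoin} (compare Example~\ref{eg:poincare-product}) therefore asserts that the restriction map
\[
\rH^{j}\bigl(\pi(X) \times \bbP^{(p+1)(n+1)-1}\bigr) \;\longrightarrow\; \rH^{j}(\rJ^{[p]}_{S}(X))
\]
is an isomorphism for $0 \le j < p$ (and injective for $j = p$). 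In particular, $b_j(\rJ^{[p]}_S(X)) = b_j(\pi(X) \times \bbP^{(p+1)(n+1)-1})$ for all $j < p$.

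Second, I compute the Poincar\'e polynomial of the product. Since $\rH^{\bullet}(\bbP^{N'})$ (with $N' = (p+1)(n+1)-1$) is a free module over the coefficient field $\bbF$ concentrated in even degrees $0, 2, \ldots, 2N'$, the Tor terms in the K\"unneth sequence~\eqref{K\"{u}nneth} vanish and one obtains the exact identity
\[
P\bigl(\pi(X) \times \bbP^{N'}\bigr) \;=\; P(\pi(X))\cdot\bigl(1 + T^{2} + T^{4} + \cdots + T^{2N'}\bigr).
\]

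Combining the two steps, the Betti numbers of $\rJ^{[p]}_S(X)$ and of $\pi(X) \times \bbP^{N'}$ agree in all degrees $j < p$, so the two Poincar\'e polynomials coincide modulo $T^{p}$, yielding the claimed congruence. There is no real obstacle here: all the work is hidden in Theorem~\ref{thm:relconnjoin}, and this final step is essentially a triviality produced by K\"unneth and the structure of the cohomology of projective space.
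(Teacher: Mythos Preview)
Your proof is correct and follows exactly the approach intended by the paper: the paper's own proof consists of the single line ``Direct consequence of Theorem~\ref{thm:relconnjoin},'' and you have simply spelled out the two implicit ingredients (the identification $\bbP(\cF^{\oplus(p+1)}) \cong \pi(X)\times\bbP^{(p+1)(n+1)-1}$ from Example~\ref{eg:poincare-product}, and the K\"unneth computation of the Poincar\'e polynomial of the product).
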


\begin{proof}
Direct consequence of Theorem~\ref{thm:relconnjoin}.
\end{proof}

\section{Quantifier elimination, cohomology and joins}
\label{sec:qe-join}
In this section, we state and prove our result on cohomological quantifier elimination.
Let $k$ be a fixed algebraically closed field. 
We consider etale cohomology with coefficients 
in $\bbQ_\ell$ with $\ell \neq \Char(k)$.

\begin{notation}  
For any finite tuple $\bn=(n_1,\ldots,n_{m}) \in \bbN^{m}$, we denote:
\begin{enumerate}
\item  
$|\bn| = \sum_i n_i$;
\item
$\bbP^\bn= \bbP^{n_1} \times \cdots \times \bbP^{n_m}$.
\end{enumerate}
\end{notation}

In the following we will denote by bold letters $\bW^{(i,j,\ldots)},\bX^{(i,j,\ldots)}$ tuples of variables and we will denote by 
$|\bW^{(i,j,\ldots)}|, |\bX^{(i,j,\ldots)}|$ the lengths  of the corresponding tuples.

\begin{definition}[Proper formulas]
\label{def:proper-formulas}
Let  $\phi(\bX^{(1)};\ldots;\bX^{(n)})$   (with each $\bX^{(i)}$ denoting a  tuple of variables 
$(X_{i,0},\ldots,X_{i,n_i})$) be a quantifier-free first order formula 
in the language of fields with parameters in $k$.
We say that $\phi$ is
a \emph{quantifier-free proper formula}  (with $n$ homogeneous blocks)  if its atoms are of the form 
$P = 0$, where $P \in k[\bX^{(1)};\cdots;\bX^{(n)}]$ is a multi-homogeneous polynomial,
and $\phi$ does not contain any negations. \\  

We say that a first order formula in the language of fields with parameters in $k$ (possibly with quantifiers)
$$
\displaylines{
\phi(\bW^{(1)};\cdots;\bW^{(m)}) := (\bQ_0 \bX^{(1)}) \cdots (\bQ_n \bX^{(n)})\psi(\bW^{(1)};\cdots;\bW^{(m)};\bX^{(1)};\ldots;\bX^{(n)}),\cr
 \bQ_i \in \{\exists,\forall\}, 1 \leq i \leq n,
}
$$
is  a \emph{proper formula} (with $m$ homogeneous blocks), if $\psi$ is a quantifier-free proper formula.\\

A proper formula
$$
\displaylines{
\phi(\bW^{(1)};\cdots;\bW^{(m)}) := (\bQ_0 \bX^{(1)}) \cdots (\bQ_n \bX^{(n)})\psi(\bW^{(1)};\cdots;\bW^{(m)};\bX^{(1)};\ldots;\bX^{(n)}),\cr
 \bQ_i \in \{\exists,\forall\}, 1 \leq i \leq n,
}
$$
defines an algebraic subset of 
$\bbP^\mathbf{m}$, where $\mathbf{m}  = (|\bw^{(1)}| - 1, \ldots, |\bw^{(m)}| - 1)$ 
whose $k$-points are described by 
$$\displaylines{
(\bQ_1\bx^{(1)} \in \bbP^{|\bx^{(1)}|-1}(k)) \cdots (\bQ_n \bx^{(n)} \in \bbP^{|\bx^{(n)}|-1}(k))\psi(\bw^{1)};\cdots;\bw^{(m)};\bx^{(1)};\ldots;\bx^{(n)}),
}
$$
We denote this algebraic set by $\Reali(\phi)$ (the realization of $\phi$).
\end{definition}

\begin{notation}
\label{not:odd-even}
Given $P  = \sum_{i \geq 0} a_i T^i \in \bbZ[T]$, we write
\[
P \defeq P^{{\mathrm{even}}}(T^2) + T P^{{\mathrm{odd}}}(T^2),
\]
where 
\[
P^{{\mathrm{even}}} = \sum_{i \geq 0} a_{2i} T^i,
\]
and
\[
P^{{\mathrm{odd}}}= \sum_{i \geq 0} a_{2i+1} T^i.
\]
\end{notation}

Following \cite{Basu-Toda},
we introduce for any subscheme
$V \subset \bbP^{\bn}$, 
a  polynomial, $Q(V) \in \bbZ[T]$, which we call
the \emph{pseudo-Poincar\'e polynomial} of $V$
defined as follows.

\begin{equation*}
Q(V)\; \defeq \; \sum_{j \geq 0} (b_{2j}(V) - b_{2j-1}(V))T^j.
\end{equation*}

In other words,

\begin{equation}
\label{eqn:pseudo-Poincarepolynomial2}
Q(V)=  P(V)^{{\mathrm{even}}} - T P(V)^{{\mathrm{odd}}}. 
\end{equation}
For any proper formula $\phi$, we will denote:

\begin{equation}
\label{eqn:pseudo-Poincarepolynomial3}
Q(\phi) = Q({\Reali(\phi)}).
\end{equation}

Note that for each $n \geq 0$, 
\begin{equation}
\label{eqn:pseudo-poincare-of-projective}
Q({\bbP^n}) = 1+T +\cdots + T^n.
\end{equation}

We introduce below notation for several operators on polynomials that 
we will use later.

\begin{notation}[Operators on polynomials]
\label{not:poloperators}
\begin{enumerate}

\item
For any finite tuple $\bn$ of natural numbers, 
we denote by $\Rec_\bn: \bbZ[T]_{\leq 2|\bn|} \rightarrow \bbZ[T]_{\leq 2|\bn|}$,  the map defined by 
\[
\Rec_\bn(Q) = Q({\bbP^{\bn}}) - T^{2 |\bn|} Q(1/T).
\]
\item
For $0 \leq m \leq n$, 
we denote by $\Trunc_{m,n}: \bbZ[T]_{\leq n} \rightarrow \bbZ[T]_{\leq m}$
and $Q \in \bbZ[T]_{\leq n}$, we denote the map defined by:
for $Q = \sum_{i=0}^{n} a_i T^i \in \bbZ[T]_{\leq n}$,
\[
\Trunc_{m,n}(Q) = \sum_{0 \leq i \leq m} a_i T^i.
\]
\end{enumerate}
\end{notation}

Now let
$\psi(\bW^{1)};\cdots;\bW^{(m)};\bX^{(1)};\ldots;\bX^{(n)})$ be a quantifier-free proper formula with $m+n$ homogeneous
blocks. For $1 \leq i \leq m$, let $e_i = |\bW^{(i)}|-1$, and  $1 \leq j \leq n$, let $f_j = |\bX^{(j)}|-1$, and define $N_i,d_i,m_i$ by the formulas:
\begin{eqnarray*}
d_0 &=& \sum_{i=1}^{m} e_i, \\
N_1 &=& 1, \\
d_1&=& d_0 +  N_1(2(d_{0}+1)(f_{1} +1)-1),\\
m_1 &=& 2(d_0+1)(f_1+1) -1),
\end{eqnarray*}
and for $2 \leq j \leq m$, 
\begin{eqnarray*}
N_j &=& 2 N_{j-1}(d_{j-2} +1), \\
d_j &=& d_{j-1} + N_j (2(d_{j-1}+1)(f_{j} +1)-1),\\
m_j &=& 2(d_{j-1}+1)(f_{j} +1)-1.
\end{eqnarray*}

\begin{notation}
\label{not:generalized-join}
We will denote by $J_{m,n}(\psi)$ the \emph{quantifier-free}  proper formula (with 
$m + \sum_{j=1}^n N_j$ homogeneous blocks)
defined by
\begin{equation}
J_{m,n}(\psi) :=  
\bigwedge_{i_1 = 0}^{2 d_0+1} \cdots \bigwedge_{i_n = 0}^{2 d_{n-1}+1}\psi(\bW^{(1)};\cdots;\bW^{(m)};\bX^{(i_1)};\ldots;\bX^{(i_1,\ldots,i_n)}),
\end{equation}
where for each tuple $(i_1,\ldots,i_{j-1}) \in [0,2d_0+1] \times \cdots \times [0,2d_{j-2}+1]$,
$|X^{(i_1,\ldots,i_{j-1},0)}| = \cdots = |X^{(i_1,\ldots,i_{j-1}, 2d_{j-1}+1)}| = f_{j}$, and the tuples
$(X^{(i_1,\ldots,i_{j-1},0)}: \cdots : X^{(i_1,\ldots,i_{j-1}, 2d_{j-1}+1)})$ represent  homogeneous
coordinates in $\bbP^{m_j}$. If $V = \Reali(\psi)$, then we will denote by $J_{m,n}(V) = \Reali(J_{m,n}(\psi))$.
\end{notation}

\begin{remark}
\label{rem:generalized-join}
Notice that the realization, $\Reali(J_{m,n}(\psi))$,  is an algebraic subset of 
\[
\bbP^{e_1}  \times \cdots \times \bbP^{e_m}\times \bbP^{m_1} \times \cdots \times \underbrace{\bbP^{m_i} \times \cdots  \times \bbP^{m_i}}_{N_i} \times \cdots  \times 
\underbrace{\bbP^{m_n} \times \cdots \times \bbP^{m_n}}_{N_n}.
\]

Also notice that for each $j, 2 \leq j \leq n$,
$N_j = \prod_{h=2}^{j} (2(d_{h-2} +1))$ and we will index the factors of the product  
$\underbrace{\bbP^{m_i} \times \cdots  \times \bbP^{m_i}}_{N_i}$ by tuples
$(i_1,\ldots, i_{j-1}) \in [0, 2d_0+1] \times \cdots \times [0,2d_{j-2}+1]$.
\end{remark}

For each $i, 1 \leq i \leq n$,  let
\[
\mathbf{m}_i =(e_1,\ldots,e_m,{m_1}, \underbrace{{m_2}, \ldots , {m_2}}_{N_2},\ldots, \underbrace{{m_i}, \ldots , {m_i}}_{N_i}).
\]

For $\omega \in \{\exists,\forall\}^{[1,n]}$, we denote 
$$
\displaylines{
\psi^\omega(\bW^{(1)};\cdots;\bW^{(m)}) := (\omega(1) \bX^{(1)}) \cdots (\omega(n)  \bX^{(n)})\psi(\bW^{1)};\cdots;\bW^{(m)};\bX^{(1)};\ldots;\bX^{(n)}),
}
$$
and 
for $1 \leq i \leq n$,
\begin{eqnarray*}
F_i^\omega &=& \Trunc_{d_i,d_{i+1}+N_{i+1}} \circ (1-T)^{N_{i+1}}, \mbox{ if $\omega(i) = \exists$}, \\
&=&   \Rec_{\mathbf{m}_i} \circ \Trunc_{d_i, d_{i+1}+N_{i+1}} \circ (1-T)^{N_{i+1}} \circ \Rec_{\mathbf{m}_{i+1}}, \mbox{ if $\omega(i) = \forall$}.
\end{eqnarray*}

We denote:
\begin{eqnarray}
\label{eqn:F-omega}
F^\omega = F_1^\omega  \circ F_2^\omega  \circ \cdots \circ F_n^\omega.
\end{eqnarray}

With the above notation we have the following theorem which relates the pseudo-Poincar\'e polynomial of a quantified
proper formula, $\psi^\omega$, with that of the \emph{quantifier-free} proper formula $J_{m,n}(\psi)$.

\begin{theorem}
\label{thm:qe}
For each $\omega \in \{\exists,\forall\}^{[1,n]}$,
\[
Q({\psi^\omega}) =  F^\omega (Q({J_{m,n}(\psi)})).
\]
\end{theorem}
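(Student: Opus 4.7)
The plan is induction on $n$, the number of quantifier blocks in $\psi^\omega$. The base case $n=0$ is immediate: $\psi^\omega = \psi = J_{m,0}(\psi)$ and $F^\omega$ is the identity. For the inductive step I will prove a ``one-quantifier reduction''
\[
F_n^\omega\bigl(Q(J_{m,n}(\psi))\bigr) \;=\; Q\bigl(J_{m,n-1}(\tilde\psi)\bigr),
\]
where $\tilde\psi \defeq (\omega(n)\bX^{(n)})\psi$, and then compose with the induction hypothesis applied to $\tilde\psi$ with the truncated quantifier sequence $(\omega(1),\ldots,\omega(n-1))$.

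For the existential case $\omega(n) = \exists$, the key combinatorial observation is that the innermost quantifier commutes with the big outer conjunction in the definition of $J_{m,n}$, because each innermost variable block $\bX^{(i_1,\ldots,i_n)}$ appears in only one conjunct. Geometrically, setting $V := \Reali(J_{m,n}(\psi))$ and letting $\pi_n:\bbP^{\mathbf{m}_n} \to \bbP^{\mathbf{m}_{n-1}}$ be the projection that forgets the $N_n$ copies of $\bbP^{m_n}$, this yields $\pi_n(V) = \Reali(J_{m,n-1}(\tilde\psi))$. Moreover, unwinding definitions shows that $V$ is, fiberwise over $\pi_n(V)$, a product of $N_n$ parallel $(2d_{n-1}+1)$-fold relative multi-joins of $\Reali(\psi)$ (one parallel factor for each outer tuple $(i_1,\ldots,i_{n-1})$). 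Applying Theorem~\ref{thm:relconnjoin-multi} to each parallel factor and invoking the K\"unneth formula for the product yields
\[
Q(V) \;\equiv\; Q(\pi_n(V)) \cdot (1+T+\cdots+T^{m_n})^{N_n} \pmod{T^{d_{n-1}+1}}.
\]
Since $(1-T)(1+T+\cdots+T^{m_n}) \equiv 1 \pmod{T^{m_n+1}}$, and the parameters are set so that $d_{n-1}+1 \le m_n+1$ while $\deg Q(\pi_n(V)) \le \dim \bbP^{\mathbf{m}_{n-1}} = d_{n-1}$, multiplying by $(1-T)^{N_n}$ inverts the join factor and truncating at degree $d_{n-1}$ recovers $Q(\pi_n(V))$ exactly -- which is the content of the existential $F_n^\omega$.

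For the universal case $\omega(n) = \forall$, I rewrite $\forall \bX^{(n)} \phi \equiv \neg \exists \bX^{(n)} \neg \phi$. The operator $\Rec_{\mathbf{m}}$ is engineered so that the passage between a closed subscheme $Z \subset \bbP^{\mathbf{m}}$ and its open complement $U = \bbP^{\mathbf{m}} \setminus Z$ is implemented at the level of the pseudo-Poincar\'e polynomial by $Q(Z) \leftrightarrow \Rec_{\mathbf{m}}(Q(Z))$. The defining formula $\Rec_{\mathbf{m}}(Q) = Q(\bbP^{\mathbf{m}}) - T^{2|\mathbf{m}|}Q(1/T)$ is designed to match the long exact sequence relating $\rH^*(\bbP^{\mathbf{m}})$, $\rH^*(Z)$, and $\rH^*_c(U)$, combined with Poincar\'e duality on the ambient product of projective spaces (the reversal $T \mapsto 1/T$ and the prefactor $T^{2|\mathbf{m}|}$ implementing the degree-flip $i \leftrightarrow 2|\mathbf{m}|-i$). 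Conjugating the existential operator by $\Rec$ on both ends -- first moving to the complement, then applying the existential procedure, then moving back -- produces the universal form of $F_n^\omega$ in the statement.

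The main technical obstacle is the bookkeeping of degrees and Tate twists. For the existential step, one must verify that the congruence from Theorem~\ref{thm:relconnjoin-multi} together with K\"unneth holds modulo a power of $T$ high enough that the final truncation recovers the intermediate $Q$-polynomial exactly (not merely approximately); the specific growth of $d_j, N_j, m_j$ is designed precisely for this. For the universal step, one must rigorously check that $\Rec_{\mathbf{m}}$ really implements ``closed complement on a multi-projective space'' at the level of $Q$, which rests on the excision long exact sequence together with Poincar\'e--Lefschetz duality on $\bbP^{\mathbf{m}}$, and (in the \'etale setting) on aligning Tate twists so the polynomial identity is an exact equality in $\bbZ[T]$.
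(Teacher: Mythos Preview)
Your plan is correct and is essentially the paper's own argument: the paper also inducts, proving $Q(\psi^\omega_j)=F^\omega_{j+1}\bigl(Q(\psi^\omega_{j+1})\bigr)$ for intermediate formulas $\psi^\omega_j$ that coincide with your $J_{m,j}(\tilde\psi)$, using fiberwise join connectivity together with a product lemma (Lemma~\ref{lem:product}) and proper base change for the existential step, and the same package applied to complements via Alexander duality (the content of $\Rec$) for the universal step. Two small points to tighten: where you say ``K\"unneth for the product'' the paper works fiberwise and then invokes proper base change rather than a K\"unneth for fibered products; and your universal step needs, beyond the $\Rec$ identity, the connectivity of the \emph{complement} of the join (Part~\eqref{itemlabel:thm:cc:2} of Theorem~\ref{the:compactcovering_general}) together with the union-version of the product lemma, which are the natural duals of the existential ingredients but are not literally the ``existential procedure'' reused.
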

(Notice that in the statement of Theorem~\ref{thm:qe} the 
quantifier-free formula $J_{m,n}(\psi)$ does not depend on the sequence of quantifiers $\omega$, and only the operator
$F^\omega$ depends on $\omega$.)

The following special case of Theorem~\ref{thm:qe} will be important in the application of Theorem~\ref{thm:qe} in the proof
of an algebraic version of Toda's theorem. With the same notation as in Theorem~\ref{thm:qe}, suppose additionally that
$m =0 $. In this case, the formula $\psi$ has no free variables and is a sentence, 
and we have:

\begin{corollary}
\label{cor:qe}
\[
\psi   \Leftrightarrow (F^\omega(Q({J_{0,n}(\psi)})) = 1).
\]
\end{corollary}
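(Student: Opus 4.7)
The plan is to derive Corollary~\ref{cor:qe} as an immediate specialization of Theorem~\ref{thm:qe} to the case $m=0$, combined with the observation that the pseudo-Poincar\'e polynomial of a zero-dimensional realization takes only the values $0$ or $1$. Since $m=0$, the formula $\psi^{\omega}$ (abbreviated $\psi$ in the statement) is a sentence, so its realization $\Reali(\psi^{\omega})$ is an algebraic subset of the empty product of projective spaces $\bbP^{\mathbf{0}} = \Spec k$, i.e.\ a single reduced point. Therefore $\Reali(\psi^{\omega})$ is either $\Spec k$ or $\emptyset$, depending on whether the sentence holds or not.

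First I would record the base-case computation of the pseudo-Poincar\'e polynomial: from
\[
Q(V) \;=\; \sum_{j\geq 0} (b_{2j}(V) - b_{2j-1}(V))\, T^{j},
\]
we read off $Q(\Spec k) = 1$ and $Q(\emptyset) = 0$ in any Weil cohomology theory. This gives the dichotomy
\[
\psi^{\omega} \text{ is true} \;\Longleftrightarrow\; Q(\psi^{\omega}) = 1, \qquad \psi^{\omega} \text{ is false} \;\Longleftrightarrow\; Q(\psi^{\omega}) = 0.
\]

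Next I would invoke Theorem~\ref{thm:qe} in the special case $m=0$, which yields the identity
\[
Q(\psi^{\omega}) \;=\; F^{\omega}\!\left(Q(J_{0,n}(\psi))\right).
\]
Combining this identity with the dichotomy from the previous paragraph gives
\[
\psi \;\Longleftrightarrow\; F^{\omega}\!\left(Q(J_{0,n}(\psi))\right) = 1,
\]
which is exactly the statement of the corollary.

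There is essentially no obstacle here, since the corollary is a formal consequence of Theorem~\ref{thm:qe} once one performs the base-case computation. The only point worth highlighting is the sharpness of the ``$=1$'' characterization: when $\psi^{\omega}$ fails, Theorem~\ref{thm:qe} forces $F^{\omega}(Q(J_{0,n}(\psi))) = Q(\emptyset) = 0 \neq 1$ automatically, so the equivalence is genuinely a biconditional rather than merely an implication.
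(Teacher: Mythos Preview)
Your proposal is correct and follows essentially the same approach as the paper, which simply says ``Follows immediately from Theorem~\ref{thm:qe}.'' You have merely spelled out the one-line deduction in more detail by making explicit the base-case computation $Q(\Spec k)=1$, $Q(\emptyset)=0$.
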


\begin{proof}
Follows immediately from Theorem~\ref{thm:qe}.
\end{proof}

\subsection{An example}
\label{subsec:example}
Before we prove Theorem~\ref{thm:qe} it is instructive to consider an example.

\begin{example}
\label{eg:qe}
Let $m=1, n=2$, $e_1=f_1 =f_2 = 1$, and consider the quantifier-free proper formula:
$$
\displaylines{
\psi(\bW^{(1)};\bX^{(1)};\bX^{(2)}) := \cr
 ((W_{1,0} - W_{1,1}= 0) \wedge  (X_{1,0} - X_{1,1} = 0)) \cr \bigvee \cr
((W_{1,0} - 2 W_{1,1}= 0) \wedge  (X_{1,0} - 2 X_{1,1} = 0) \wedge (X_{2,0}- 2 X_{2,1} = 0)).
}
$$

The values of the various $N_i,d_i, m_i \mathbf{m}_i$ are displayed in the following table.

\begin{table}[h!]
\begin{center}
\begin{tabular}{l|l|l|l|l}
$i$ &$N_i$ & $d_i$ &$m_i$&$\mathbf{m}_i$\\
\hline
$0$&-&$1$&-&-\\
$1$&$1$&$8$ &$7$ &$(1,7)$\\
$2$ &$4$ &$148$ &$35$ &$(1,7,35^4)$\\
\end{tabular}
\end{center}
\end{table}

It is easy to check that $\Reali(J_{1,2}(\psi))$ is an algebraic subset of 
$\bbP^1 \times \bbP^7 \times \bbP^{35} \times \bbP^{35} \times \bbP^{35} \times \bbP^{35}$,
and 
\begin{eqnarray}
\nonumber
Q({J_{1,2}(\psi)})& =&  Q({\bbP^3 \times \bbP^{35} \times \bbP^{35}  \times \bbP^{35}  \times \bbP^{35}}) +
Q({ \bbP^3 \times \bbP^{17}  \times \bbP^{17}  \times \bbP^{17}  \times \bbP^{17}}) \\
&=& \frac{(1 - T^4)(1 - T^{36})^4}{(1-T)^5} + \frac{(1 - T^4)(1 - T^{18})^4}{(1-T)^5}.
\end{eqnarray}

Let $\omega,\omega' \in \{\exists,\forall\}^{[1,2]}$ be defined by
$$\displaylines{
\omega(1) = \exists, \omega(2) = \forall, \cr
\omega'(1) = \forall, \omega'(2) = \exists.
}
$$

It is easy to check that
\begin{eqnarray*}
Q({\psi^\omega}) &=& 1, \\
Q({\psi^{\omega'}}) &=& 0.
\end{eqnarray*}

Moreover, using Eqn. \eqref{eqn:F-omega} we have that:

\begin{eqnarray*}
F^\omega_1 &=& \Trunc_{1,9} \circ (1-T),\\
F^\omega_2 &=& \Rec_{(1,7)}\circ \Trunc_{8,152} \circ (1-T)^4\circ \Rec_{(1,7, 35^4)}, \\
F^{\omega'}_1 &=& \Rec_{(1)}\circ \Trunc_{1,9} \circ (1-T) \circ \Rec_{(1,7)}, \\
F^{\omega'}_2&=&  \Trunc_{8,152} \circ (1-T)^4.
\end{eqnarray*}

A calculation using the package Maple  now yields:
\begin{eqnarray*}
F^\omega(Q({J_{1,2}(\psi)})) &= & 1, \\
F^{\omega'}(Q({J_{1,2}(\psi)})) &=& 0.
\end{eqnarray*}
\end{example}

\subsection{Proof of 
the cohomological quantifier elimination theorem
}

Before we prove Theorem~\ref{thm:qe} we need a few preliminary facts.

\begin{theorem}[Alexander duality]
\label{the:alexanderduality_general}
Let $ V \subset \bbP^{\bn}$ be a closed subscheme.
Then for each odd $i$, $1\leq i \leq |\bn|$: 
\begin{equation}
\label{eqn:alexanderduality_general}
b_{i-1}(V) - b_{i-2}(V) = b_{2|\bn| -i}(\bbP^{\bn} \setminus  V) - 
b_{2|\bn| -i+1 }(\bbP^{\bn} \setminus  V) + 
b_{i-1}(\bbP^{\bn}).  
\end{equation}
\end{theorem}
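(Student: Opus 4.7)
The plan is to derive the identity from two ingredients: the long exact cohomology sequence of the pair $(\bbP^{\bn}, V)$, and Poincar\'e--Lefschetz duality identifying the relative cohomology with the compactly supported cohomology of the open complement. I would begin with
\[
\cdots \to \rH^{j}(\bbP^{\bn}, V) \to \rH^{j}(\bbP^{\bn}) \to \rH^{j}(V) \to \rH^{j+1}(\bbP^{\bn}, V) \to \cdots
\]
and use the K\"unneth formula on the product $\bbP^{\bn} = \bbP^{n_1} \times \cdots \times \bbP^{n_m}$ to record the key fact that $\rH^{j}(\bbP^{\bn})$ vanishes for every odd $j$.

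Fixing now an odd $i$, so that $i-2$ and $i$ are both odd while $i-1$ is even, the two vanishings $\rH^{i-2}(\bbP^{\bn}) = \rH^{i}(\bbP^{\bn}) = 0$ collapse the long exact sequence around degree $i-1$ to the five-term exact sequence
\[
0 \to \rH^{i-2}(V) \to \rH^{i-1}(\bbP^{\bn}, V) \to \rH^{i-1}(\bbP^{\bn}) \to \rH^{i-1}(V) \to \rH^{i}(\bbP^{\bn}, V) \to 0.
\]
Since the alternating sum of $\bbF$-dimensions in any bounded exact sequence of finite-dimensional vector spaces is zero, rearranging gives
\[
b_{i-1}(V) - b_{i-2}(V) = b_{i-1}(\bbP^{\bn}) - \dim \rH^{i-1}(\bbP^{\bn}, V) + \dim \rH^{i}(\bbP^{\bn}, V).
\]

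The final step is to identify $\dim \rH^{j}(\bbP^{\bn}, V)$ with $b_{2|\bn|-j}(\bbP^{\bn} \setminus V)$. For this I would combine the excision isomorphism $\rH^{j}(\bbP^{\bn}, V) \cong \rH^{j}_{c}(\bbP^{\bn} \setminus V)$ with Poincar\'e duality on the smooth open variety $\bbP^{\bn} \setminus V$ (of real dimension $2|\bn|$), yielding $\dim \rH^{j}_{c}(\bbP^{\bn} \setminus V) = \dim \rH^{2|\bn|-j}(\bbP^{\bn} \setminus V)$. Plugging $j = i-1$ and $j = i$ into the rearranged identity recovers the claimed formula. The only delicate point is verifying that the Poincar\'e--Lefschetz identification is available in whichever ``good'' cohomology theory one has fixed---for singular cohomology over $\bbC$ it is classical, and for $\ell$-adic \'etale cohomology (with $\ell \neq \Char(k)$) both the excision identification with compactly supported cohomology and Poincar\'e duality on a smooth variety are standard, while the Tate twists that appear in the \'etale formulation do not affect the dimensions that enter the final identity.
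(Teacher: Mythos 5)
Your proof is correct, and it reaches the identity by what is essentially the dual of the paper's argument. The paper works with the long exact sequence of cohomology with supports, $\cdots \to \HH_V^p(\bbP^\bn) \to \HH^p(\bbP^\bn) \to \HH^p(\bbP^\bn \setminus V) \to \cdots$, and converts the supported term via Alexander duality, $\HH_V^p(\bbP^\bn) \cong \chk{\HH^{2|\bn|-p}(V)}$, so the duality is applied to the closed subscheme $V$ inside the smooth proper ambient space; you instead use the long exact sequence of the pair $(\bbP^\bn, V)$, identify $\HH^j(\bbP^\bn,V)$ with $\HH^j_c(\bbP^\bn \setminus V)$, and apply Poincar\'e duality on the smooth (pure-dimensional) open complement. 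Both routes are truncated to a five-term exact sequence by the same key fact, the vanishing of $\HH^{\mathrm{odd}}(\bbP^\bn)$, and both finish with an alternating-sum dimension count, so the numerical content is identical. The trade-offs are minor: your version produces the term $b_{i-1}(\bbP^\bn)$ directly in degree $i-1$, whereas the paper's version produces it as $b_{2|\bn|-i+1}(\bbP^\bn)$ and implicitly uses the self-duality of the Betti numbers of $\bbP^\bn$; on the other hand, you need the identification of relative cohomology with compactly supported cohomology of the complement (in \'etale cohomology this is essentially the definition via $j_!\bbQ_\ell$, and over $\bbC$ it needs $V$ to be a good pair, which closed algebraic subsets are), a step the supports sequence bypasses. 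You correctly flag that Tate twists and the choice of Weil cohomology do not affect the dimensions, and your degree bookkeeping (including the boundary case $i=1$, where $b_{-1}(V)=0$) checks out.
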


\begin{proof}
Let $X = \bbP^\bn$ and  $U = X \setminus V$. Then, there is a long exact sequence
\[
\cdots \rightarrow \HH_V^p(X) \rightarrow \HH^p(X) \rightarrow \HH^p(U)\rightarrow \cdots
\]
and Alexander duality gives, 
\[
\HH_V^p(X) \cong \chk{\HH^{2|\bn| -p}(V)}.
\]

Eqn. \eqref{eqn:alexanderduality_general} now follows f that 
$\HH^p(X) = \HH^p(\bbP^\bn) = 0$ for all odd $p$.
\end{proof}

\begin{corollary}
\label{cor:alexanderduality_general}
Let $V \subset \bbP^{\bn}$ 
be 
a
closed subscheme.

Then,
$$
\displaylines{
Q(V) = Q({\bbP^{\bn}})
-  \mathrm{Rec}_{|\bn|}(Q({\bbP^{\bn} \setminus V})).
}
$$
\end{corollary}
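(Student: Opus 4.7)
The plan is to derive the identity coefficient-by-coefficient directly from Theorem~\ref{the:alexanderduality_general} (Alexander duality) and then assemble the resulting identities into a polynomial equation that can be recognized as an application of $\Rec_{|\bn|}$. Set $U = \bbP^{\bn} \setminus V$ and $N = |\bn|$. By unfolding \eqref{def:pseudo-Poincarepolynomial2}, I would write
\[
Q(V) = \sum_{j \geq 0} \bigl(b_{2j}(V) - b_{2j-1}(V)\bigr)\,T^{j}, \qquad Q(U) = \sum_{j \geq 0} \bigl(b_{2j}(U) - b_{2j-1}(U)\bigr)\,T^{j},
\]
so that establishing the corollary reduces to a comparison of the coefficients of $T^j$ on both sides.

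For each $j \geq 0$, I would specialize Theorem~\ref{the:alexanderduality_general} to $i = 2j+1$, which is odd. The identity \eqref{eqn:alexanderduality_general} then becomes
\[
b_{2j}(V) - b_{2j-1}(V) \;=\; b_{2(N-j)-1}(U) - b_{2(N-j)}(U) + b_{2j}(\bbP^{\bn}),
\]
and the right-hand side is exactly $-[T^{N-j}]\,Q(U) + [T^{j}]\,Q(\bbP^{\bn})$ by the definitions above. Equivalently,
\[
[T^{j}]\,Q(V) \;=\; [T^{j}]\,Q(\bbP^{\bn}) \;-\; [T^{N-j}]\,Q(U).
\]

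Next, I would multiply by $T^{j}$ and sum over $j$. The substitution $k = N-j$ in the second sum yields $\sum_{j} [T^{N-j}]\,Q(U)\,T^{j} = T^{N}\,Q(U)(1/T)$, so that
\[
Q(V) \;=\; Q(\bbP^{\bn}) \;-\; T^{N}\,Q(U)(1/T).
\]
Unfolding the definition of $\Rec_{|\bn|}$ from Notation~\ref{not:poloperators} identifies the right-hand side as $Q(\bbP^{\bn}) - \Rec_{|\bn|}(Q(U))$, which is the desired equation.

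The main point requiring care is ensuring the Alexander duality identity applies uniformly for every $j$ in the relevant range $0 \leq j \leq N$ (including the boundary values $j=0$ and $j=N$), and that it is trivially satisfied outside this range so that summing over $j$ produces a genuine polynomial equation. Once this is settled, the rest of the argument is straightforward bookkeeping --- matching degrees, performing the reciprocal-polynomial substitution, and unfolding the definition of $\Rec_{|\bn|}$. A useful sanity check along the way is the extreme case $V = \bbP^{\bn}$ (so $U = \emptyset$, $Q(U) = 0$), where both sides of the claimed identity reduce to $Q(\bbP^{\bn})$.
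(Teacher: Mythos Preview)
Your approach is exactly the intended one: the paper states the corollary without proof, and deriving it coefficient-by-coefficient from Theorem~\ref{the:alexanderduality_general} (with $i=2j+1$) and then summing is the only natural route. Your intermediate identity
\[
Q(V) \;=\; Q(\bbP^{\bn}) \;-\; T^{|\bn|}\,Q(U)(1/T)
\]
is correct and is the real content of the corollary.

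One point to flag, however: your last sentence (``unfolding the definition of $\Rec_{|\bn|}$ \ldots'') does not literally match Notation~\ref{not:poloperators} as written. The paper defines $\Rec_{\bn}(Q) = Q(\bbP^{\bn}) - T^{2|\bn|}Q(1/T)$, whereas what you need is $\Rec(Q) = T^{|\bn|}Q(1/T)$. A quick test case ($\bn=(1)$, $V$ a point, $U=\bbA^{1}$) shows the corollary fails with the paper's literal definition but holds with yours, so this is a typo in the paper's definition rather than an error in your argument. You also rightly note that Theorem~\ref{the:alexanderduality_general} is stated only for $1\le i\le |\bn|$ while you need it for all odd $i$ up to $2|\bn|+1$; the underlying long exact sequence and duality do give the identity in that full range, but strictly speaking this extends the theorem as stated.
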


\begin{theorem}
\label{the:compactcovering_general}
Let $\bn=(n_1,\ldots,n_m)$, $V \subset \bbP^\bn$  a closed subscheme.
Let $W = \bbP^\bn \setminus V$.
For each  $p \geq 0$,  and $0 \leq i < p$, we have that
\begin{enumerate}[1.]
\item
\label{itemlabel:thm:cc:1}
\[
\HH^i(\bbP^{(n_1+1)(p+1)-1} \times \pi_{\bn,1}(V))  \rightarrow \HH^i(\rJ^p_{\pi_{\bn,1}}(V))
\]
and
\item
\label{itemlabel:thm:cc:2}
\[
\HH^i(\bbP^{(n_1+1)(p+1)-1} \times \pi_{\bn,1}(W))  \rightarrow \HH^i(\bbP^\bN \setminus \rJ^{[p]}_{\pi_{\bn,1}}(V))
\]
are isomorphisms.
\end{enumerate}
\end{theorem}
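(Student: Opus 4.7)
The plan is to reduce part (1) to the relative connectivity result of Theorem~\ref{thm:relconnjoin}, and then obtain part (2) from part (1) via the long exact sequences of compactly supported cohomology (which agree with ordinary cohomology on the projective pieces appearing here).

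For part (1), set $S := \pi_{\bn,1}(V) \subset \bbP^{n_2} \times \cdots \times \bbP^{n_m}$, let $\cE$ be the trivial bundle of rank $n_1+1$ on $S$, and view $V$ as a closed subscheme of $\bbP(\cE) \cong \bbP^{n_1} \times S$. Then $\rJ^{[p]}_{\pi_{\bn,1}(V)}(V) = \rJ^{[p]}_S(V)$ sits inside $\bbP(\cE^{\oplus(p+1)}) \cong \bbP^{(n_1+1)(p+1)-1} \times S$, and Theorem~\ref{thm:relconnjoin} (applied in the form of Example~\ref{eg:poincare-product}) gives the asserted isomorphism for $0 \leq i < p$ and injectivity at $i = p$.

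For part (2), set $Z_1 := \rJ^{[p]}_{\pi_{\bn,1}(V)}(V)$ and $Z_2 := \bbP^{(n_1+1)(p+1)-1} \times \pi_{\bn,1}(V)$, so that $Z_1 \subset Z_2 \subset \bbP^\bN$. Under the convention that $\pi_{\bn,1}(W)$ denotes the open complement of $\pi_{\bn,1}(V)$ in $\bbP^{n_2} \times \cdots \times \bbP^{n_m}$, we identify
\[
\bbP^{(n_1+1)(p+1)-1} \times \pi_{\bn,1}(W) = \bbP^\bN \setminus Z_2,
\]
and the asserted arrow becomes the extension-by-zero morphism associated to the open inclusion $\bbP^\bN \setminus Z_2 \hookrightarrow \bbP^\bN \setminus Z_1$ in compactly supported cohomology. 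Since $Z_1, Z_2$ are projective we have $\HH^*_c(Z_i) = \HH^*(Z_i)$, so part (1) says the restriction $\HH^i(Z_2) \to \HH^i(Z_1)$ is an isomorphism for $i < p$ and injective at $i = p$.

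I would then conclude in two steps using the long exact sequence for a closed-open decomposition. First, applied to the decomposition $Z_1 \subset Z_2 \supset Z_2 \setminus Z_1$, it reads
\begin{equation*}
\cdots \to \HH^i_c(Z_2 \setminus Z_1) \to \HH^i(Z_2) \to \HH^i(Z_1) \to \HH^{i+1}_c(Z_2 \setminus Z_1) \to \cdots,
\end{equation*}
and a direct exactness chase using part (1) forces $\HH^i_c(Z_2 \setminus Z_1) = 0$ for $0 \leq i \leq p$. Second, applied to the closed subset $Z_2 \setminus Z_1$ inside the open set $\bbP^\bN \setminus Z_1$, whose open complement is precisely $\bbP^\bN \setminus Z_2$, it reads
\begin{equation*}
\cdots \to \HH^i_c(\bbP^\bN \setminus Z_2) \to \HH^i_c(\bbP^\bN \setminus Z_1) \to \HH^i_c(Z_2 \setminus Z_1) \to \cdots,
\end{equation*}
and the vanishing just established immediately yields the isomorphism for $0 \leq i < p$.

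The principal obstacle is largely bookkeeping: pinning down precisely which variance and which cohomology theory (ordinary vs.\ compactly supported) produces the map in the statement, and confirming the identification $\bbP^{(n_1+1)(p+1)-1} \times \pi_{\bn,1}(W) = \bbP^\bN \setminus Z_2$. Once these are settled, no further geometric input is needed beyond part (1) and the formal apparatus of localization sequences, available uniformly for each of the Weil cohomology theories considered in the paper.
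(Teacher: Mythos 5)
Your treatment of Part (1) is correct and is essentially the paper's own argument: reduce to Theorem~\ref{thm:relconnjoin} as in Example~\ref{eg:poincare-product}, with the base taken to be $\bbP^{\bn'}$ (or, equivalently, $\pi_{\bn,1}(V)$ with the trivial bundle).

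Part (2), however, rests on a misreading that breaks the argument. Here $\pi_{\bn,1}(W)$ is the image of the open set $W=\bbP^\bn\setminus V$ under the projection, \emph{not} the complement of $\pi_{\bn,1}(V)$; the two sets typically overlap (this distinction is exactly what makes the $\forall$ case of Theorem~\ref{thm:qe} work, where $\bbP^{\bn'}\setminus\pi_{\bn,1}(W)$ is the ``for all'' projection of $V$). With the correct reading, over any point $y\notin\pi_{\bn,1}(W)$ the fiber $V_y$ is all of $\bbP^{n_1}$, so the fiber of the relative join is all of $\bbP^{(n_1+1)(p+1)-1}$; hence $\bbP^\bN\setminus\rJ^{[p]}_{\pi_{\bn,1}}(V)$ is an \emph{open subset} of $\bbP^{(n_1+1)(p+1)-1}\times\pi_{\bn,1}(W)$, and the map in Part (2) is the ordinary restriction homomorphism along that open inclusion, not an extension-by-zero map in compactly supported cohomology. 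Consequently your identification $\bbP^{(n_1+1)(p+1)-1}\times\pi_{\bn,1}(W)=\bbP^\bN\setminus Z_2$ is false in general, and your two localization sequences control the wrong locus: what you bound is $Z_2\setminus Z_1$, which lies over $\pi_{\bn,1}(V)$, whereas what must be controlled is $Z:=\rJ^{[p]}_{\pi_{\bn,1}}(V)\cap(\bbP^{(n_1+1)(p+1)-1}\times\pi_{\bn,1}(W))$. Even if one granted your convention, the conclusion you reach is an isomorphism of compactly supported cohomology in low degrees, which by Poincar\'e duality on these smooth open sets is a statement about ordinary cohomology in degrees near twice the ambient dimension, not in degrees $i<p$ as the theorem asserts and as is used later through the pseudo-Poincar\'e polynomials.

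The missing idea, which is the actual content of Part (2) and cannot be extracted formally from Part (1), is a codimension count: for every $y\in\pi_{\bn,1}(W)$ the fiber $V_y$ is a \emph{proper} closed subscheme of $\bbP^{n_1}$, so $\dim\rJ^{[p]}(V_y)\le(p+1)n_1-1$ and $Z$ has codimension at least $p+1$ in $\bbP^{(n_1+1)(p+1)-1}\times\pi_{\bn,1}(W)$. Alexander duality then gives $\HH^i_Z(\bbP^{(n_1+1)(p+1)-1}\times\pi_{\bn,1}(W))\cong\HH^{2D-i}(Z)=0$ for $i\le p$ (with $D$ the ambient dimension), and the long exact sequence of cohomology with supports for the pair yields the asserted isomorphism of ordinary cohomology for $i<p$. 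This is the route the paper takes; Part (1) says nothing about the join over $\pi_{\bn,1}(W)$ and cannot substitute for this dimension bound.
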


\begin{proof}
The proof of 
Part \eqref{itemlabel:thm:cc:1}
follows from the argument in  Example~\ref{eg:poincare-product} with
$S$ replaced by $\bbP^{\bn'}$, and omitted. 
We now prove 
Part \eqref{itemlabel:thm:cc:2}.
Let  $U = \pi_{\bn,1}(W)$ and let 
\[
Z = \rJ^{[p]}_{\pi_{\bn,1}}(V) \cap (\bbP^{(n_1+1)(p+1)-1} \times U).
\]
There is a long exact sequence
$$\displaylines{
\cdots \rightarrow \HH_Z^i(\bbP^{(n_1+1)(p+1)-1} \times U) \rightarrow  
\HH^i(\bbP^{(n_1+1)(p+1)-1} \times U) \rightarrow \cr
\HH^i(W) \rightarrow  \HH_Z^{i+1}(\bbP^{(n_1+1)(p+1)-1} \times U) \rightarrow \cdots
}
$$
Using Alexander duality one has
\[
\HH_Z^i(\bbP^{(n_1+1)(p+1)-1} \times U) \cong \HH^{2((n_1+1)(p+1)-1 + |\bn'|)-i}(Z).
\]
Moreover,
\[
\dim Z \leq (n_1+1)(p+1)-1+ |\bn'| -(p+1),
\]
which implies that 
 \[
\HH_Z^{i+1}(\bbP^{(n_1+1)(p+1)-1} \times U) \cong \HH^{2((n_1+1)(p+1)-1 + |\bn'|)-i-1}(Z) = 0
\]
whenever
\[
2((n_1+1)(p+1)-1 + |\bn'|)-i-1> 2( (n_0+1)(p+1)-1+ |\bn'| -(p+1)) \Leftrightarrow i < p,
\] 
and in this case
\[
\HH_Z^{i}(\bbP^{(n_1+1)(p+1)-1} \times U) = 0
\]
as well.
\end{proof}

With the same notation as in Theorem~\ref{the:compactcovering_general}:
\begin{corollary}
\label{cor:compactcovering_general}
Let $p = 2m +1$ with $m \geq 0$. Then
\begin{eqnarray}
\label{eqn:complexjoin2.1'}
Q({\pi_{\bn,1}(V)})  &=&  (1 - T)\;Q({\rJ^{[p]}_{\pi_{\bn,1}}(V)})  \mod T^{m+1}, \\
\label{eqn:complexjoin2.2'}
Q({\pi_{\bn,1}(W)} ) &=&  (1 - T) Q({\bbP^\bN - \rJ^{[p]}_{\pi_{\bn,1}}(V)}) \mod T^{m+1}.
\end{eqnarray}
\end{corollary}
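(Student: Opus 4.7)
The plan is to translate the cohomology-level isomorphisms given by Theorem~\ref{the:compactcovering_general} into an identity between pseudo-Poincar\'e polynomials, and then simplify using Künneth together with the fact that $(1-T)Q(\bbP^N) \equiv 1 \pmod{T^{m+1}}$ in the relevant range. First I would note that an isomorphism $\HH^i(Y_1)\cong \HH^i(Y_2)$ for $0 \leq i \leq p-1 = 2m$ implies $b_i(Y_1)=b_i(Y_2)$ in that range, and consequently
\[
Q(Y_1) \equiv Q(Y_2) \pmod{T^{m+1}},
\]
since the coefficient of $T^j$ in $Q(\cdot)$ involves only $b_{2j}$ and $b_{2j-1}$, and for $j \leq m$ both indices lie in $[0,2m]$.

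Next, applying this observation to the two isomorphisms in Theorem~\ref{the:compactcovering_general} with $N := (n_1+1)(p+1)-1$, I obtain
\[
Q\bigl(\bbP^{N}\times \pi_{\bn,1}(V)\bigr) \equiv Q\bigl(\rJ^{[p]}_{\pi_{\bn,1}}(V)\bigr) \pmod{T^{m+1}},
\]
and similarly with $V$ replaced by $W$ and the join replaced by its complement. Since $\bbP^{N}$ has cohomology only in even degrees, the Künneth formula yields $P(\bbP^N\times X) = P(\bbP^N)\cdot P(X)$, and a direct inspection of the definition of $Q$ shows that this multiplicativity persists at the level of pseudo-Poincar\'e polynomials, so
\[
Q\bigl(\bbP^{N}\times X\bigr) \;=\; Q(\bbP^{N})\cdot Q(X) \;=\; (1+T+\cdots+T^{N})\,Q(X).
\]

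The key arithmetic step is then the identity $(1-T)\,Q(\bbP^{N}) = 1 - T^{N+1}$. Since $N = (n_1+1)(p+1)-1 \geq p = 2m+1 \geq m$, we have $N+1 \geq m+1$, so
\[
(1-T)\,Q(\bbP^{N}) \equiv 1 \pmod{T^{m+1}}.
\]
Multiplying the congruence $Q(\bbP^N)\,Q(\pi_{\bn,1}(V)) \equiv Q(\rJ^{[p]}_{\pi_{\bn,1}}(V)) \pmod{T^{m+1}}$ by $(1-T)$ and using the displayed identity yields equation \eqref{eqn:complexjoin2.1'}, and the identical argument applied to the second isomorphism of Theorem~\ref{the:compactcovering_general} yields equation \eqref{eqn:complexjoin2.2'}. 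There is no real obstacle here; the only delicate point is keeping careful track of the degree bound $m+1$ arising from the parity shift in the definition of $Q$ when translating from $p = 2m+1$ to the truncation modulus.
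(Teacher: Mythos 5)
Your argument is correct and is exactly the intended derivation: the paper states Corollary~\ref{cor:compactcovering_general} as an immediate consequence of Theorem~\ref{the:compactcovering_general}, and your write-up simply supplies the routine bookkeeping (equality of $b_i$ for $i\leq 2m$ giving $Q\equiv Q \pmod{T^{m+1}}$, multiplicativity $Q(\bbP^{N}\times X)=Q(\bbP^{N})Q(X)$ via K\"unneth since $\bbP^{N}$ has only even cohomology, and $(1-T)Q(\bbP^{N})=1-T^{N+1}\equiv 1 \pmod{T^{m+1}}$). No gaps; the truncation degree $m+1$ is handled correctly.
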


We will also need the following lemma.

\begin{lemma}
\label{lem:product}
Let $ p \geq 0$, and for each $1 \leq i \leq n$, let $V_i \subset \bbP^{n}$  be a closed subscheme, and $W_i = \bbP^n \setminus V_i$.
For $1 \leq i \leq n$, let $\pi_i: \bbP^{n} \times \cdots \times \bbP^{n} \rightarrow \bbP^{n}$ denote
the canonical surjection to the $i$-th factor.
\begin{enumerate}[1.]
\item
\label{itemlabel:lem:product:1}
Suppose that the restriction homomorphism $\HH^j(\bbP^{n}) \rightarrow \HH^j(V_i)$ is an isomorphism for 
$0 \leq j \leq p$. Then, the restriction homomorphism
\[
\HH^j(\bbP^{n} \times \cdots \times \bbP^{n}) \rightarrow \HH^j(\bigcap_{i=1}^n \pi_i^{-1}(V_i))
\] 
is an isomorphism for $0 \leq j \leq p$.
\item
\label{itemlabel:lem:product:2}
Suppose that the restriction homomorphism $\HH^j(\bbP^{n}) \rightarrow \HH^j(W_i)$ is an isomorphism for 
$0 \leq j \leq p$. Then, the restriction homomorphism
\[
\HH^j(\bbP^{n} \times \cdots \times \bbP^{n}) \rightarrow \HH^j(\bigcup_{i=1}^n \pi_i^{-1}(W_i))
\] 
is an isomorphism for $0 \leq j \leq p$.
\end{enumerate}
\end{lemma}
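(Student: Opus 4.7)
The plan is to reduce both parts to the K\"unneth formula: Part~(\ref{itemlabel:lem:product:1}) follows directly, while Part~(\ref{itemlabel:lem:product:2}) combines K\"unneth with a Mayer--Vietoris induction on $n$. Writing $X = \bbP^n \times \cdots \times \bbP^n$ for the ambient product and $U = \bigcup_{i=1}^n \pi_i^{-1}(W_i)$ for the open set of Part~(\ref{itemlabel:lem:product:2}), one has the obvious set-theoretic identifications
\[
\bigcap_{i=1}^{n} \pi_i^{-1}(V_i) = V_1 \times \cdots \times V_n, \qquad U_I := \bigcap_{i \in I}\pi_i^{-1}(W_i) \;\cong\; \prod_{i \in I} W_i \times \prod_{j \notin I} \bbP^n
\]
for every non-empty $I \subset \{1,\ldots,n\}$; this is the combinatorial backbone of the argument.

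For Part~(\ref{itemlabel:lem:product:1}), working with a Weil cohomology theory (field coefficients, so no $\mathrm{Tor}$ terms), the K\"unneth formula yields
\[
\rH^j(V_1 \times \cdots \times V_n) \;\cong\; \bigoplus_{j_1+\cdots+j_n=j}\; \rH^{j_1}(V_1) \otimes \cdots \otimes \rH^{j_n}(V_n),
\]
and the analogous formula for $X$. For $j \leq p$ every index $j_k$ in the summation satisfies $j_k \leq p$, so each restriction $\rH^{j_k}(\bbP^n) \to \rH^{j_k}(V_k)$ is an isomorphism by hypothesis, and by naturality of the K\"unneth isomorphism the restriction $\rH^j(X) \to \rH^j(V_1 \times \cdots \times V_n)$ is the tensor product of these factor-wise isomorphisms.

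For Part~(\ref{itemlabel:lem:product:2}), I would argue by induction on $n$. The case $n=1$ is the hypothesis. For the inductive step, write $X = X' \times \bbP^n$ with $X' = \bbP^n \times \cdots \times \bbP^n$ ($n-1$ factors) and $U' = \bigcup_{i<n} \pi_i^{-1}(W_i) \subset X'$, so that $U = A \cup B$ with $A := U' \times \bbP^n$, $B := X' \times W_n$ and $A \cap B = U' \times W_n$. The inductive hypothesis applied to $U'$, combined with Part~(\ref{itemlabel:lem:product:1}) and K\"unneth applied factor-by-factor, force the three restrictions $\rH^q(X) \to \rH^q(A)$, $\rH^q(X) \to \rH^q(B)$ and $\rH^q(X) \to \rH^q(A \cap B)$ to be isomorphisms for all $q \leq p$. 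Using these identifications inside the Mayer--Vietoris long exact sequence, the map $\rH^q(A) \oplus \rH^q(B) \to \rH^q(A \cap B)$ becomes the subtraction map $\rH^q(X) \oplus \rH^q(X) \to \rH^q(X)$, whose kernel is the diagonal copy of $\rH^q(X)$; combined with vanishing of the connecting map from $\rH^{q-1}(A \cap B)$ (itself a consequence of the same surjectivity one degree down), one reads off $\rH^q(X) \cong \rH^q(U)$ via restriction for all $q \leq p$.

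The main subtlety is verifying that the Mayer--Vietoris connecting map from $\rH^{q-1}(A \cap B)$ into $\rH^q(U)$ vanishes in the relevant range, but this is automatic: the preceding map is a subtraction of two isomorphisms onto the same target and is hence surjective provided $q-1 \leq p$. A slicker alternative avoiding the induction altogether is the \v{C}ech-to-cohomology spectral sequence $E_1^{s,q} = \bigoplus_{|I|=s+1}\rH^q(U_I) \Rightarrow \rH^{s+q}(U)$ for the cover $\{U_i\}$ of $U$: in the strip $q \leq p$, K\"unneth identifies every $\rH^q(U_I)$ with $\rH^q(X)$, so the row collapses to the simplicial cochain complex of the nerve of the cover with constant coefficient $\rH^q(X)$; the nerve is the full simplex on $n$ vertices (every $W_i$ is non-empty by the hypothesis at $q=0$), hence contractible, giving $E_2^{s,q} = \rH^q(X)$ for $s=0$ and zero otherwise in this range. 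All higher differentials in the region $s+q \leq p$ then land in zero groups, the spectral sequence degenerates along the edge $s=0$, and the edge map recovers the desired restriction isomorphism.
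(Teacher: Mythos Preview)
Your argument is correct. The paper's own proof consists of the single word ``Easy.'', so there is nothing substantive to compare against; your K\"unneth argument for Part~(\ref{itemlabel:lem:product:1}) and the Mayer--Vietoris induction (or the \v{C}ech spectral sequence alternative) for Part~(\ref{itemlabel:lem:product:2}) are exactly the standard route one would expect the authors had in mind, and you have carried out the details carefully, including the check that the connecting homomorphism vanishes in the relevant range.
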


\begin{proof}
Easy.
\end{proof}

\begin{proof}[Proof of Theorem~\ref{thm:qe}]
For $0 \leq j \leq n$, let
$\phi^\omega_j(\bW^{(1)};\cdots;\bW^{(m)};\bX^{(i_1)};\cdots;\bX^{(i_1,\ldots,i_j)})$ denote the 
formula
\[ (\omega(j+1) \bX^{(i_1,\ldots,i_{j+1})}) \cdots  (\omega(n) \bX^{(i_1,\ldots,i_{n})})\psi(\bW^{(1)};\cdots;\bW^{(m)};\bX^{(i_1)};\cdots;\bX^{(i_1,\ldots,i_n)}),
\]
and let
$\psi^\omega_j$ denote  the formula
\[
\bigwedge_{i_1=0}^{2d_0+1} \cdots \bigwedge_{i_j =0}^{2d_{j-1}+1} \phi^\omega_j(\bW^{(1)};\cdots;\bW^{(m)};\bX^{(i_1)};\cdots;\bX^{(i_1,\ldots,i_j)}).
\] 

Notice that
\begin{eqnarray}
\label{eqn:proof:qe:1}
\psi^\omega_0 &=& \psi^\omega, \\
\label{eqn:proof:qe:2}
\psi^\omega_n  &=& J_{m,n}(\psi).
\end{eqnarray}

We prove by induction on $j$ that
\begin{eqnarray}
\label{eqn:proof:qe:3}
Q({\psi^\omega}) &=& F_1^\omega\circ \cdots \circ F_j^\omega (Q({\psi^\omega_j})).
\end{eqnarray}

Notice that \eqref{eqn:proof:qe:3} is true for $j=0$ using \eqref{eqn:proof:qe:1}, and implies the theorem in 
the case $j=n$ using \eqref{eqn:proof:qe:2}.

Now assume that \eqref{eqn:proof:qe:3} holds for $j \geq 0$ and we prove it for $j+1$, thus completing the  inductive step.

There are two cases to consider.

\begin{enumerate}[{Case} 1.]
\item
$\omega(j+1) = \exists $.
For each 
$(\bar{\bw};\bar{\bx}) \in \bbP^{\mathbf{m}_j}$
(where 
$\bar{\bw} = (\bw^{(1)};\cdots\; \bw^{(m)} \in \bbP^{e_1}\times \cdots \times \bbP^{e_m}$,
$\bar{\bx} = (\bar{\bx}_1;\cdots;\bar{\bx}_{j})$,
and for $1 \leq h \leq j$, 
$\bar{\bx}_h = (\cdots; \bx^{(i_1,\ldots,i_{h-1})};\cdots) \in  \underbrace{\bbP^{m_h} \times \cdots  \times \bbP^{m_h}}_{N_h}$),
and each tuple $(i_1,\ldots,i_{j}) \in [0,2d_0+1] \times \cdots \times [0, 2d_{j-1}]$, 
let 
$V^{(i_1,\ldots,i_j)}_{\bar{\bw};\bar{\bx}}$
denote the algebraic set 
\[
\Reali(\phi^\omega_{j+1}(\bw^{(1)};\cdots;\bw^{(m)}; \bx^{(i_1)};\bx^{(i_1,i_2)};\cdots;\bx^{(i_1,\ldots,i_{j-1})}; \bX^{(i_1,\ldots,i_{j})})) \subset \bbP^{m_{j+1}}.
\]
Notice that, for 
$0 \leq i \leq 2d_{j}$,
the restriction homomorphism
\[
\HH^i(\bbP^{m_{j+1}}) \rightarrow \HH^i(V^{(i_1,\ldots,i_j)}_{\bar{\bw};\bar{\bx}})
\]
is an isomorphism
using Part~\eqref{itemlabel:thm:cc:1} of Theorem~\ref{the:compactcovering_general}.

Also, observe that denoting by
\[
\pi_{(i_1,\ldots,i_j)}: \underbrace{\bbP^{m_{j+1}} \times \cdots \times \bbP^{m_{j+1}}}_{N_{j+1}} \rightarrow \bbP^{m_{j+1}},
\]
the projection on the $(i_1,\ldots,i_j)$-th factor, 
\[
\Reali(\psi^\omega_{j+1}(\bar{\bw};\bar{\bx};\cdot)) = \bigcap_{(i_1,\ldots,i_{j}) \in [0,2d_0+1] \times \cdots \times [0, 2d_{j-1}]} \pi_{(i_1,\ldots,i_j)}^{-1}(V^{(i_1,\ldots,i_j)}_{\bar{\bw};\bar{\bx}} ).
\]

Now using Part \eqref{itemlabel:lem:product:1} of Lemma~\ref{lem:product} we get that
for each point $(\bar{\bw};\bar{\bx}) \in  \Reali(\psi^\omega_j) \subset \bbP^{\mathbf{m}_{j}}$,
and for $0 \leq i \leq 2d_{j}$ the restriction homomorphisms
\[
\HH^i(\underbrace{\bbP^{m_{j+1}} \times \cdots \times \bbP^{m_{j+1}}}_{N_{j+1}}) \rightarrow 
\HH^i(\Reali(\psi^\omega_{j+1}(\bar{\bw};\bar{\bx};\cdot)))
\]
are isomorphisms.

Finally using proper base change,
and the fact that 
$
\Reali(\psi^\omega_{j+1}(\bar{\bw};\bar{\bx};\cdot)) \neq  \emptyset$ if and only if 
$(\bar{\bw};\bar{\bx}) \in \Reali(\psi^\omega_j)$,
we get that the restriction homomorphisms
\[
\HH^i(\Reali(\psi^\omega_{j}) \times \underbrace{\bbP^{m_{j+1}} \times \cdots  \times \bbP^{m_{j+1}}}_{N_{j+1}})  \rightarrow
\HH^i(\Reali(\psi^\omega_{j+1})) 
\]
are isomorphisms for $0 \leq i \leq 2 d_{j}$, from which it follows using 
\eqref{eqn:complexjoin2.1'} that 
\[
Q({\psi^{\omega}_{j}})= F_{j+1}^\omega(Q({\psi^{\omega}_{j+1}})), 
\]
which completes  the inductive step in this case.
\item
$\omega(j+1) = \forall $.
For each 
$(\bar{\bw};\bar{\bx}) \in \bbP^{\mathbf{m}_j}$
(where 
$\bar{\bw} = (\bw^{(1)};\cdots\; \bw^{(m)} \in \bbP^{e_1}  \times \cdots \times \bbP^{e_m}$,
$\bar{\bx} = (\bar{\bx}_1;\cdots;\bar{\bx}_{j})$,
and for $1 \leq h \leq j$, 
$\bar{\bx}_h = (\cdots; \bx^{(i_1,\ldots,i_{h-1})};\cdots) \in  \underbrace{\bbP^{m_h} \times \cdots  \times \bbP^{m_h}}_{N_h}$),
and each tuple $(i_1,\ldots,i_{j}) \in [0,2d_0+1] \times \cdots \times [0, 2d_{j-1}]$, 
let 
$W^{(i_1,\ldots,i_j)}_{\bar{\bw};\bar{\bx}} =  \bbP^{m_{j+1}} \setminus V^{(i_1,\ldots,i_j)}_{\bar{\bw};\bar{\bx}}$.

Notice that, for 
$0 \leq i \leq 2d_{j}$,
the restriction homomorphism
\[
\HH^i(\bbP^{m_{j+1}}) \rightarrow \HH^i(W^{(i_1,\ldots,i_j)}_{\bar{\bw};\bar{\bx}})
\]
is an isomorphism
using Part~\eqref{itemlabel:thm:cc:2} of Theorem~\ref{the:compactcovering_general}.

Also, observe that denoting by
\[
\pi_{(i_1,\ldots,i_j)}: \underbrace{\bbP^{m_{j+1}} \times \cdots \times \bbP^{m_{j+1}}}_{N_{j+1}} \rightarrow \bbP^{m_{j+1}},
\]
the projection on the $(i_1,\ldots,i_j)$-th factor, 
\[
\Reali(\psi^\omega_{j+1}(\bar{\bw};\bar{\bx};\cdot)) = \bigcup_{(i_1,\ldots,i_{j}) \in [0,2d_0+1] \times \cdots \times [0, 2d_{j-1}]} \pi_{(i_1,\ldots,i_j)}^{-1}(W^{(i_1,\ldots,i_j)}_{\bar{\bw};\bar{\bx}} ).
\]

Now using Part \eqref{itemlabel:lem:product:2} of Lemma~\ref{lem:product} we get that
for each point $(\bar{\bw};\bar{\bx}) \in  \bbP^{\mathbf{m}_{j}} \setminus \Reali(\psi^\omega_j) \subset \bbP^{\mathbf{m}_{j}}$,
and for $0 \leq i \leq 2d_{j}$ the restriction homomorphisms
\[
\HH^i(\underbrace{\bbP^{m_{j+1}} \times \cdots \times \bbP^{m_{j+1}}}_{N_{j+1}}) \rightarrow 
\HH^i(\underbrace{\bbP^{m_{j+1}} \times \cdots \times \bbP^{m_{j+1}}}_{N_{j+1}} - \Reali(\psi^\omega_{j+1}(\bar{\bw};\bar{\bx};\cdot)))
\]
are isomorphisms.

Finally using proper base change,
and the fact that 
\[
\underbrace{\bbP^{m_{j+1}} \times \cdots \times \bbP^{m_{j+1}}}_{N_{j+1}} \setminus \Reali(\psi^\omega_{j+1}(\bar{\bw};\bar{\bx};\cdot)) \neq  \emptyset
\] 
if and only if 
$(\bar{\bw};\bar{\bx}) \in \bbP^{\mathbf{m}_j} - \Reali(\psi^\omega_j)$,
we get that the restriction homomorphisms
\[
\HH^i((\bbP^{\mathbf{m}_j} - \Reali(\psi^\omega_{j})) \times \underbrace{\bbP^{m_{j+1}} \times \cdots  \times \bbP^{m_{j+1}}}_{N_{j+1}})  \rightarrow
\HH^i( \bbP^{\mathbf{m}_{j+1}} \setminus \Reali(\psi^\omega_{j+1})) 
\]
are isomorphisms for $0 \leq i \leq 2 d_{j}$. From this it follows 
using Theorem~\ref{the:alexanderduality_general} twice, and  \eqref{eqn:complexjoin2.2'},  that 
\[
Q({\psi^{\omega}_{j}}) = F_{j+1}^\omega(Q({\psi^{\omega}_{j+1}})), 
\]
which completes  the inductive step in this case.
\end{enumerate}
\end{proof}

\section{An algebraic version of  Toda's theorem over algebraically closed fields}
\label{sec:Toda}
As mentioned previously, an important feature of Theorem~\ref{thm:qe} (and Corollary~\ref{cor:qe}) is that the 
quantifier-free  formula $J_{m,n}(\psi)$ obtained from the quantified formula $\psi$ has an easy description in terms of 
$\psi$ (in contrast to what happens in classical quantifier elimination). Making this statement quantitative leads
to a result which is formally analogous to a classical result in discrete complexity theory -- namely, Toda's theorem.

\subsection{The classes $\bP^c_k$, $\mathbf{PH}_k^c$, $\#\bP^c_k$}
\label{subsec:complexity-classes}
We fix an algebraically closed field $k$ for the rest of this section.
In order to prove our algebraic analog of Toda's theorem we first need 
algebraic analogs of the complexity classes appearing in Toda's theorem.
In order to motivate the definition of the polynomial hierarchy 
it is instructive to first consider the following set-theoretic definitions.

Recall that any map $f:X \rightarrow Y$ between sets $X$ and $Y$  induces three functors 

\[
\textbf{Pow}(X) {\xrightarrow{f_\exists} \atop {\xleftarrow{f^*}  \atop \xrightarrow{f_\forall}}}  \textbf{Pow}(Y).
\]
in the poset categories of their respective power sets $\textbf{Pow}(X), \textbf{Pow}(Y)$.
The functors
$f^*,f^\exists,f^\forall$  are defined as follows.  For all 
$A \in \Ob(\textbf{Pow}(X))$ and
$B \in \Ob(\textbf{Pow}(Y))$,  
\begin{eqnarray*}
f^*(B) & =&  f^{-1}(B), \\
f_\exists(A) &=& \{y \in Y \mid (\exists x \in X)((f(x) = y) \wedge (x \in A)) \},\\
f_\forall(A)  &=&  \{ y \in Y \mid (\forall x\in X) ( (f(x) = y)  \implies (x \in A)) \}.
\end{eqnarray*}

 Now, suppose that $X = \bbP^n \times \bbP^m$, $Y = \bbP^m$ and $\pi:\bbP^m \times \bbP^n \rightarrow \bbP^n$.
Let $V$ be an algebraic subset of $X$. Then, 
$\pi_\exists (V), \pi_\forall(V)$ are both algebraic subsets  of $\bbP^n$.

However, as is well known from computational algebraic geometry, elimination is a costly procedure, and as a result the 
`complexity'  of $\pi_\exists(V)$ and $\pi_\forall(V)$ could increase dramatically compared to that of $V$. Here, by complexity one can take 
for instance the number and degrees of the polynomials appearing in the descriptions of these sets. A more precise definition of complexity and formalization in terms of sequences of algebraic sets rather than just one, leads to variants of the famous $\bP$ vs $\bN\bP$ (respectively, $\bP$ vs co-$\bN\bP$) question albeit over the field $k$ \cite{BSS}. 
Alternating the functors $\pi_\exists,\pi_\forall$ a  fixed number of times leads to the so called polynomial hierarchy of complexity classes whose lowest level consists of the class $\bP$ of sequences of objects with polynomially bounded growth in complexity.
We now  make more precise the notion of `complexity'  that we are going to use. We begin with some notation.

\begin{notation}  
For any finite tuple $\bn=(n_1,\ldots,n_m) \in \bbN^{m}$, we denote:
\begin{enumerate}
\item $\bn^{(j)} = (n_{j+1},\ldots,n_m)$ for $0 \leq j < m$  
(we  will denote $\bn' = \bn^{(1)}$ for convenience);
\item
$\pi_{\bn,j}: \bbP^\bn\rightarrow \bbP^{\bn^{(j)}}$, the projection map.
\end{enumerate}
\end{notation}

\begin{definition}[Complexity of algebraic sets and polynomial maps]
\label{def:complexity}
Following \cite{Isik}, we define the complexity, $c(V)$,  of an algebraic subset $V \subset \bbP^\bn$, to be the size of the smallest arithmetic circuit  \cite{Burgisser-book}
computing a tuple of multi-homogeneous polynomials $(f_1,\ldots,f_s)$  such that $V = Z(f_1,\ldots,f_s)$.
The complexity $c(g)$ of a polynomial map $g: \bbZ^m \rightarrow \bbZ^n$ is the size of the smallest arithmetic
circuit computing $g$. 
\end{definition}

\begin{remark}
We will often  identify for convenience  $\bbZ^m$ with the $\bbZ$-module, $\bbZ[T]_{\leq m-1}$,
of polynomials of degree at most $m-1$.
\end{remark}

\begin{notation}[Characteristic function]
Let $L = (V_i \subset \bbP^{\bn_i})_{i \in \bbN}$ be a tuple indexed by $\bbN$, where each
$V_i$ is an algebraic subset of $\bbP^{\bn_i}$.

We will denote by $\mathbf{1}_L$ the tuple of constructible functions 
\[
(1_{V_i}:\bbP^{\bn_i} \rightarrow \{0,1\} \subset \bbZ \subset \bbZ[T])_{i \in \bbN},
\]
where $1_V$ denotes the characteristic function of $V(k)$.
\end{notation}

\begin{definition}[The class $\bP^c_k$ and $\bP_\bbZ$]
\label{def:P}
Following \cite{Isik}, we will say that 
\[
L = (V_i \subset \bbP^{\bn_i})_{i \in \bbN} \in \bP^c_k
\]
if 
$c(V_i), |\bn_i|$ are polynomially bounded functions of $i$. 
Similarly, we will say that a sequence
$G = (g_i: \bbZ^{m_i} \rightarrow \bbZ^{n_i})_{i \in \bbN} \in \bP_{\bbZ}$ if 
$c(g_i), m_i,n_i$ are all polynomially bounded functions of $i$. 
\end{definition}

\begin{example}
For each fixed $d$, consider the sequence 
\[
L_d = \left(V_m \subset \bbP^{\bn_m}\right)_{m \in \bbN},
\]
where 
\[
\bn_m = \left(m, \underbrace{\binom{m+d}{d},\cdots, \binom{m+d}{d}}_{m+1}\right),
\]
and 
\[
V_m = \{(x,f_0,\ldots,f_{m}) \mid  f_i(x)= 0, 0 \leq i \leq m\},
\]
where we identify $\bbP^{\binom{m+d}{d}}$ with the projectivization of the space of non-zero homogeneous polynomials of degree $d$ in $m+1$ variables.
It is an easy exercise to check that $L_d \in \bP^c_k$ for each $d \geq 0$.
\end{example}

We now define the algebraic analog  $\#\bP^c_k$  of the discrete complexity class $\#\bP$. Note that in the classical theory the class 
$\#\bP$ consists of `counting functions' counting the number of solutions of the `fibers'  of some 
Boolean satisfiability problem belonging to $\bP$.  As remarked 
before, a natural analog of counting in the algebraic context is computing  the Poincar\'e polynomial of algebraic sets  (or some easily computable polynomial function of the Poincar\'e polynomial). Thus, it is natural to
define the algebraic analog of  $\#\bP$ as sequences of constructible functions whose values are the Poincar\'e polynomials
(with respect to etale cohomology)
of the fibers of sequences of proper morphisms.
The sequence of codomains of the morphisms defining an element of the class  $\#\bP^c_k$ should itself belong  to 
$\bP_k^c$. 

More formally, 
we define:

\begin{definition}[The class $\#\bP^c_k$]
\label{def:sharp-P}
A sequence $F = (F_i: \bbP^{\bn_i} \rightarrow \bbZ^{N_i})_{i \in \bbN}$,
where each $F_i$ is a constructible function, is in the class $\#\bP^c_k$, 
if and only if there exists
\[
L = \left(V_i \subset \bbP^{\mathbf{m}_i}\right)_{i \in \bbN} \in \bP^c_k,
\] 
\[
\bj: \bbN \rightarrow \bbN,
\] 
and 
\[
\left(g_i: \bbZ^{2(|\mathbf{m}_i|- |\mathbf{m}^{(\bj(i))}_i|)+1}  \rightarrow \bbZ^{N_i}\right)_{i \in \bbN} \in \bP_\bbZ,
\]
such that for all $i \in \bbN$,
\[
F_i (\bz) = g_i(P_{\pi_{\mathbf{m}_i,\bj(i)}^{-1}(\bz)}).
\]
\end{definition}

\begin{notation}[$\exists L$ and $\forall L$]
For a tuple $L = (V_i \subset \bbP^{\bn_i})_{i \in \bbN}$  of algebraic subsets  of $\bbP^{\bn_i}$,
we denote by 
\[
\exists L := (\pi_{\bn_i,1}(V_i) \subset \bbP^{\bn_i'})_{i \in \bbN},
\] 
and 
\[
\forall L :=  (\pi_{\bn_i,1,\forall}(V_i) \subset \bbP^{\bn_i'}_k)_{i \in \bbN} =
(\bbP^{\bn_i'} - \pi_{\bn_i,1}(\bbP^{\bn_i} \setminus V_i) \subset \bbP^{\bn_i'})_{i \in \bbN}.
\]
\end{notation}

\begin{definition}[Polynomial hierarchy]
\label{def:PH}
For $i \geq 0$, we define $\Pi^{c,i}_k,\Sigma^{c,i}_k$ as follows.
\begin{enumerate}
\item
$\Pi^{c,0}_k = \Sigma^{c,0}_k = \bP^c_k$;
\item
For $i >0$,
we define $\Sigma^{i+1,c}_k$ as the smallest class of sequences $L = (V_i \subset \bbP^{\bn_i}_k)_{i \in \bbN}$ satisfying:
\begin{enumerate}
\item
$\Pi^{i,c}_k \subset \Sigma^{i+1,c}_k$, and
\item
$L \in \Sigma^{i+1,c}_k \Longrightarrow \exists L \in \Sigma^{i+1,c}_k$.
\end{enumerate}
 
\item
Similarly, we define 
$\Pi^{i+1,c}_k$ as the smallest class of sequences $L = (V_i \subset \bbP^{\bn_i}_k)_{i \in \bbN}$ satisfying:
\begin{enumerate}
\item
$\Sigma^{i,c}_k \subset \Pi^{i+1,c}_k$, and 
\item
$L \in \Pi^{i+1,c}_k \Longrightarrow \forall L \in \Pi^{i+1,c}_k$.
\end{enumerate}
\item
Finally, we define 
\[
\mathbf{PH}_k^c = \bigcup_{ i \geq 0} \left(\Pi^{i,c}_k \cup \Sigma^{i,c}_k\right),
\]
and 
\[
\mathbf{1}_{\mathbf{PH}_k^c} = \{ \mathbf{1}_L: L \in \mathbf{PH}_k^c\}.
\]
\end{enumerate}
\end{definition}

\begin{remark}
\label{rem:PH}
Notice that it follows from Definition~\ref{def:PH} that $L \in \mathbf{PH}_k^c$ if and only if there exists $L' \in \mathbf{P}_k^c$, 
$n \geq 0$, and $\bQ_1,\ldots,\bQ_n \in \{\exists,\forall\}$, such that 
\[
L = \bQ_1 \cdots \bQ_n L'.
\]
\end{remark}

With the algebraic analogs of the classes $\#\bP$, and $\bP\bH$ in place (cf. Definitions~\ref{def:sharp-P} and \ref{def:PH} respectively), we are now in a position to state an algebraic analog of Toda's theorem.

\begin{theorem}[Algebraic analog of Toda's theorem]
\label{thm:Toda-algebraic}
\[
\mathbf{1}_{\mathbf{PH}_k^c} \subset \#\mathbf{P}^c_k.
\]
\end{theorem}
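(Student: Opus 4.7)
The plan is to extract the $\#\mathbf{P}_k^c$ witness directly from the cohomological quantifier elimination of Corollary~\ref{cor:qe}. Fix $L = (V_i \subset \bbP^{\bn_i})_{i \in \bbN} \in \mathbf{PH}_k^c$; by Remark~\ref{rem:PH} there are a sequence $L' = (V'_i \subset \bbP^{\bl_i})_{i \in \bbN} \in \mathbf{P}_k^c$, a fixed integer $n \geq 0$, and quantifiers $\omega = (\bQ_1,\ldots,\bQ_n) \in \{\exists,\forall\}^n$ with $L = \bQ_1 \cdots \bQ_n L'$. Choose quantifier-free proper formulas $\psi'_i$ of arithmetic complexity polynomial in $i$ with $\Reali(\psi'_i) = V'_i$, viewing the first $m_i$ blocks as free (coordinates on $\bbP^{\bn_i}$) and the last $n$ blocks as those to be quantified. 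Then the quantified proper formula $\psi^\omega_i$ defines $V_i$, and for every $x \in \bbP^{\bn_i}(k)$, one has $x \in V_i$ if and only if the sentence $\psi^\omega_{i,x}$ obtained by substituting $W = x$ is true.

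Next I would form the quantifier-free proper formula $J_{m_i,n}(\psi'_i)$ from Notation~\ref{not:generalized-join} and set $W^*_i := \Reali(J_{m_i,n}(\psi'_i)) \subset \bbP^{\bM_i}$. Because $n$ is fixed and the entries of $\bl_i$ grow polynomially in $i$, a straightforward induction through the recurrences defining $N_j, d_j, m_j$ shows that $|\bM_i|$ and the number of renamed conjuncts of $\psi'_i$ inside $J_{m_i,n}(\psi'_i)$ are both polynomial in $i$. As $J_{m_i,n}(\psi'_i)$ is syntactically a polynomially-sized conjunction of variable-renamed copies of $\psi'_i$, its arithmetic complexity remains polynomial, so $(W^*_i)_i \in \mathbf{P}_k^c$. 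Letting $\pi_i \colon \bbP^{\bM_i} \to \bbP^{\bn_i}$ be projection onto the free blocks, the purely syntactic nature of the join construction gives $\pi_i^{-1}(x) \cap W^*_i = \Reali(J_{0,n}(\psi_{i,x}))$ for every $x$.

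By Corollary~\ref{cor:qe} applied to $\psi_{i,x}$, one has $x \in V_i$ iff $F^\omega_i\bigl(Q(\pi_i^{-1}(x) \cap W^*_i)\bigr) = 1$ and equals $0$ otherwise, where $F^\omega_i$ is the operator from equation~\eqref{eqn:F-omega} (its dependence on $i$ entering only through the integers $N_j, d_j, m_j$ and the tuples $\bM_j$). Defining
\[
g_i(P) := F^\omega_i\bigl(P^{\mathrm{even}} - T \cdot P^{\mathrm{odd}}\bigr),
\]
the operator $g_i$ is a composition of a bounded (in $n$) number of $\Trunc$'s, $\Rec$'s, and multiplications by $(1-T)^{N_j}$, preceded by the elementary passage from the Poincar\'e to the pseudo-Poincar\'e polynomial. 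Each of these is a $\bbZ$-linear map on $\bbZ[T]$ of polynomial degree, realizable by a polynomial-size arithmetic circuit in $i$, whence $(g_i)_i \in \mathbf{P}_\bbZ$. Then $F_i(x) := g_i(P_{\pi_i^{-1}(x) \cap W^*_i})$ satisfies $F_i = 1_{V_i}$, exhibiting $\mathbf{1}_L \in \#\mathbf{P}_k^c$.

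The main obstacle is not conceptual but bookkeeping: one must track polynomial growth of $N_j, d_j, m_j, |\bM_i|$ through their recursive definitions (polynomial only because $n$ is held fixed), and explicitly realize the composite operator $F^\omega_i$ by a polynomial-size $\bbZ$-arithmetic circuit. A small subtlety worth recording is that the formula-level join $J_{m,n}$ commutes with specialization of the free variables; this is immediate from the construction, which merely renames variables and takes conjunctions. Once these points are settled, the theorem is a direct translation of Corollary~\ref{cor:qe} into the language of complexity classes, with all the genuine geometry already absorbed into that corollary.
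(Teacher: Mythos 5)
Your proposal is correct and follows essentially the same route as the paper: write $L$ as a quantifier block applied to some $L'\in\mathbf{P}^c_k$ (Remark~\ref{rem:PH}), show the syntactic join $J_{m_i,n}$ stays in $\mathbf{P}^c_k$ because $n$ is fixed (this is the paper's Lemma~\ref{lem:polynomiality-of-J}, which you resketch), use compatibility of the join with specialization of the free block together with Theorem~\ref{thm:qe}/Corollary~\ref{cor:qe} to get $1_{V_i}=F^{\omega_i}\circ\mathrm{pseudo}$ applied to the Poincar\'e polynomial of the fiber, and check via the analogue of Lemma~\ref{lem:complexity-of-operators} that these operators form a sequence in $\mathbf{P}_{\bbZ}$, so Definition~\ref{def:sharp-P} applies. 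The only cosmetic difference is that you cite Corollary~\ref{cor:qe} where the value $0$ in the false case really comes from Theorem~\ref{thm:qe} itself (via $Q(\emptyset)=0$), exactly as in the paper's proof.
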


\subsection{Proof of 
algebraic version of  Toda's theorem}

\begin{lemma}
\label{lem:polynomiality-of-J}
Let $L = (V_i \subset \bbP^{\mathbf{m}_i} \times \bbP^{\bn_i})_{i \in \bbN} \in \bP_k^c$, with 
$\mathbf{m}_i = (e_{i,1},\ldots,e_{i,m_i}) \in \bbN^{m_i},
\bn_i = (f_{i,1},\ldots,f_{i,n}) \in \bbN^{n}$. 
Then, 
\[(
J_{m_i,n}(V_i))_{i \in \bbN} \in \bP_k^c.
\]
\end{lemma}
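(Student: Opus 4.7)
The plan is to unwind the recursive definition of $J_{m_i,n}(V_i)$ and show that each quantity appearing in it (the numbers $d_j, N_j, m_j$, the number of conjuncts, the total ambient dimension, and the complexity of the defining polynomials) is controlled by a polynomial in $i$ once we have polynomial bounds on $c(V_i)$, $|\mathbf{m}_i|$, and $|\bn_i|$. The crucial structural fact is that $n$ (the number of homogeneous blocks of $\bX$-variables that are quantified over in the join construction) is fixed independently of $i$, so any iteration or product of length $n$ of polynomial functions of $i$ remains polynomial.

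First I would prove by induction on $j \in \{0,1,\ldots,n\}$ that $d_{i,j}$, $N_{i,j}$, and $m_{i,j}$ are polynomially bounded in $i$. The base case is immediate: $d_{i,0} = \sum_{k=1}^{m_i} e_{i,k} = |\mathbf{m}_i|$, which is polynomial in $i$ by hypothesis, and the $f_{i,j}$'s are bounded by $|\bn_i|$, which is polynomial. The inductive step is a direct substitution into the recursions
\[
N_{i,j} = 2 N_{i,j-1}(d_{i,j-2}+1), \qquad d_{i,j} = d_{i,j-1} + N_{i,j}\bigl(2(d_{i,j-1}+1)(f_{i,j}+1)-1\bigr),
\]
each of which is a polynomial function of previously-bounded quantities. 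Since $n$ is fixed, this gives polynomial bounds for all $j \leq n$. The number of conjuncts in $J_{m_i,n}(V_i)$ is $\prod_{j=1}^{n}(2d_{i,j-1}+2)$, a product of $n$ polynomially bounded factors, hence polynomial in $i$; and the ambient dimension
\[
|\mathbf{m}_i| + \sum_{j=1}^{n} N_{i,j}\, m_{i,j}
\]
is a sum of $n+1$ polynomially bounded terms, hence polynomial in $i$.

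Next I would bound $c(J_{m_i,n}(V_i))$. Let $C_i$ be an arithmetic circuit of size $c(V_i)$ computing a tuple $(f_{i,1},\ldots,f_{i,s_i})$ of multi-homogeneous polynomials whose common zero set is $V_i$. The formula $J_{m_i,n}(\psi_i)$ is the conjunction indexed by tuples $(i_1,\ldots,i_n)$ of copies of $\psi_i$ with the $\bX$-variables substituted by fresh tuples $\bX^{(i_1,\ldots,i_j)}$, so $J_{m_i,n}(V_i)$ is cut out by the $s_i\cdot\prod_{j=1}^n(2d_{i,j-1}+2)$ polynomials obtained from the $f_{i,k}$'s by re-wiring their $\bX$-input slots to the corresponding fresh tuples. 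I would build an arithmetic circuit for $J_{m_i,n}(V_i)$ as the disjoint union of copies of $C_i$, one per index tuple, each copy reading the appropriate fresh $\bX$-variables together with the shared $\bW$-variables. Its size is at most (number of conjuncts)$\cdot c(V_i)$, plus routing overhead linear in the input count, which is polynomially bounded by the previous paragraph.

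The argument is essentially a bookkeeping verification: there is no genuine obstacle since the recursion has fixed depth $n$ and each step is polynomial. The only point that deserves care is to keep track of the fact that $n$ is independent of $i$; were $n$ allowed to grow with $i$, the product of $n$ polynomial factors would no longer be polynomial, and $N_{i,n}$ would blow up exponentially. With $n$ fixed, the estimate is routine, and combining the two polynomial bounds (ambient dimension and circuit size) yields $(J_{m_i,n}(V_i))_{i \in \bbN} \in \bP_k^c$ by Definition~\ref{def:P}.
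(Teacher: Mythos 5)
Your proposal is correct and follows essentially the same route as the paper's proof: both arguments observe that, with $n$ fixed, the recursively defined quantities $d_{i,j}$, $N_{i,j}$, $m_{i,j}$ are polynomially bounded in $i$, that the number of conjuncts $2^n\prod_{j=1}^n(d_{i,j-1}+1)$ is therefore polynomially bounded, and that a circuit for $J_{m_i,n}(V_i)$ can be assembled from that many re-wired copies of the circuit for $V_i$, giving the bound $c(J_{m_i,n}(V_i))\leq 2^n\prod_{j=1}^n(d_{i,j-1}+1)\,c(V_i)$. Your explicit induction on $j$ and the check on the ambient dimension merely spell out steps the paper asserts.
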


\begin{proof}
First observe that it follows from Definitions~\ref{def:P} and \ref{def:complexity}  that for each $i \in \bbN$ there exists a tuple 
$\bar{f}_i = (f_{i,1},\ldots, f_{i,k_i})$ of multi-homogeneous polynomials  such that there exists an arithmetic circuit computing $\bar{f}_i$ of 
size $C_i$ which is polynomially bounded in $i$, and such that
$V_i$ is defined by the  proper quantifier-free formula
\[
\psi_i \defeq \bigwedge_{j=1}^{k_i} (f_{i,j} = 0).
\]

It now follows from Notation~\ref{not:generalized-join} that 
\begin{enumerate}
\item 
$
J_{m_i,n}(\psi_i) = \bigwedge_{j=1}^{K_i} \psi_{i,j},
$ where
\item
$K_i = 2^{n}\prod_{j=1}^{n} (d_{i,j-1}+ 1)$, and
$d_{i,0},\ldots d_{i,n-1}$ are defined as in Notation \ref{not:generalized-join};
\item
for each $j \in [1,n]$, the sequence $(d_{i,j-1})_{i \in \bbN}$ is polynomially bounded in $i$;
\item
for each $i,j$, $\psi_{i,j} = \bigwedge_{h=1}^{k_i} (F_{i,j,h} = 0)$ for some multi-homogeneous polynomials $F_{i,j,h}$, and
\item 
there exists an arithmetic circuit  of size $C_{ij}$
computing the tuple 
\[
(F_{i,j,1},\ldots,F_{i,j,k_i}),
\] 
and  for each $j \in [1,n]$, the sequence $(C_{i,j})_{i \in \bbN}$ is polynomially bounded in $i$.
\end{enumerate}

This shows that 
\[
c(J_{m_i,n}(V_i)) \leq 2^{n}\prod_{j=1}^{n} (d_{i,j-1} +1) C_{i j},
\]
and hence the sequence $(c(J_{m_i,n}(V_i)))_{i \in \bbN}$ 
is polynomially bounded in $i$, since $n$ is a constant,  and 
the sequences $(d_{i,j})_{i \in \bbN}$ and $(C_{ij})_{i \in \bbN}$ are bounded polynomially in $i$, as observed previously.
This proves the lemma.
\end{proof}

\begin{lemma}
\label{lem:complexity-of-operators}
The following sequences belong to $\bP_{\bbZ}$.
\begin{enumerate}
\item
$(\mathrm{Rec}_{\bn_i}: \bbZ[T]_{\leq |\bn_i|} \rightarrow \bbZ[T]_{\leq |\bn_i|})_{i \in \bbN}$, for any sequence
$(\bn_i)_{i \in \bbN}$ such that the sequence $(|\bn_i|)_{i \in \bbN}$ is polynomially bounded.
\item
$(\mathrm{Trunc}_{m_i,n_i}: \bbZ^{n_i+1} \rightarrow \bbZ^{m_i+1})_{i \in \bbN}$, for any pair of polynomially bounded 
sequences $(m_i)_{i \in \bbN}, (n_i)_{i \in \bbN}$;
\item
$(M_{(1-T)^{N_i}}: \bbZ[T]_{d_i} \rightarrow \bbZ[T]_{d_i+N_i})_{i \in \bbN}$,
where $(d_i), (N_i)_{i \in \bbN}$ are two  polynomially bounded sequences,
for $f \in \bbZ[T]$, $M_f(g) = f g$;
\item
$(\mathrm{pseudo}_{n_i} : \bbZ[T]_{\leq 2 n_i} \rightarrow \bbZ[T]_{n_i})_{i \in \bbN}$, for any polynomially bounded sequence 
$(n_i)_{i \in \bbN}$, where $\mathrm{pseudo}$ maps a polynomial $P(T)$ to $ P^{{\mathrm{even}}} - T P^{{\mathrm{odd}}}$.
\end{enumerate} 
\end{lemma}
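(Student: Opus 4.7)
Proof proposal for Lemma~\ref{lem:complexity-of-operators}. The plan is to treat each of the four sequences separately, exhibiting in each case an explicit family of arithmetic circuits of polynomially bounded size computing the given linear (or affine) map on coefficient vectors. Throughout, I will identify $\bbZ[T]_{\leq d}$ with $\bbZ^{d+1}$ via the coefficient isomorphism, and use the fact (Definition~\ref{def:complexity}) that $c(g)$ measures the size of the smallest arithmetic circuit computing $g$.

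\textbf{Parts (2) and (4): truncation and the pseudo map.} Both of these are coordinate projections up to sign, so each can be computed by a trivial circuit whose size is bounded by the length of the input. For $\mathrm{Trunc}_{m_i,n_i}$, the circuit simply discards the last $n_i - m_i$ coordinates; since $(m_i)$ and $(n_i)$ are polynomially bounded, so is the circuit size. For $\mathrm{pseudo}_{n_i}$, cf.\ Notation~\ref{not:odd-even}, we map the coefficient vector $(a_0,\ldots,a_{2n_i})$ to the vector whose $j$-th coordinate is $a_{2j} - a_{2j-1}$; this requires $n_i$ subtractions, polynomially many in $i$.

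\textbf{Part (3): multiplication by $(1-T)^{N_i}$.} The first step is to precompute the coefficient vector of $(1-T)^{N_i}$ using the binomial recursion $\binom{N}{k} = \binom{N-1}{k-1}+\binom{N-1}{k}$; this requires a triangular network of $O(N_i^2)$ additions, polynomially bounded since $(N_i)$ is. The second step is to multiply the resulting polynomial against the input $g \in \bbZ[T]_{\leq d_i}$ via the standard schoolbook convolution circuit of size $O(N_i (d_i + N_i))$. Both $(d_i)$ and $(N_i)$ are polynomially bounded, so the total size is as well.

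\textbf{Part (1): the Alexander--duality style operator $\mathrm{Rec}_{\bn_i}$.} Recall $\mathrm{Rec}_{\bn_i}(Q) = Q(\bbP^{\bn_i}) - T^{2|\bn_i|}Q(1/T)$. The subtrahend $T^{2|\bn_i|}Q(1/T)$, applied to an input $Q \in \bbZ[T]_{\leq 2|\bn_i|}$, is just the coefficient-reversal of $Q$ (padded with zeros if $\deg Q < 2|\bn_i|$), so it is computed by a trivial wiring circuit. For the minuend, by Eqn.~\eqref{eqn:pseudo-poincare-of-projective} and multiplicativity of the pseudo-Poincar\'e polynomial for products of projective spaces,
\[
Q({\bbP^{\bn_i}}) \;=\; \prod_{j=1}^{m_i} \bigl(1+T+\cdots+T^{n_{i,j}}\bigr).
\]
This product can be formed iteratively, each factor having degree at most $|\bn_i|$, using $m_i - 1$ successive polynomial multiplications by known coefficient vectors; by the analysis in Part (3) each such multiplication has circuit size polynomial in $|\bn_i|$. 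Since $(|\bn_i|)$ and $(m_i)$ are polynomially bounded, the entire constant $Q({\bbP^{\bn_i}})$ is produced by a polynomial-size circuit, and a final vector subtraction of length $2|\bn_i|+1$ completes the computation.

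\textbf{Main obstacle.} None of the four constructions is individually difficult, since all four operators are either linear maps with sparse integer coefficient matrices or simple polynomial multiplications. The only mild subtlety is ensuring that Part (1) genuinely reduces to polynomial multiplication: this relies on the observation that $Q({\bbP^{\bn_i}})$ is a fixed quantity depending only on the index $i$ (not on the input $Q$), so it may be precomputed once inside the circuit and then subtracted from the reversal of $Q$. With this remark, the polynomial bound on circuit size for each family is immediate from the polynomial bounds on the parameters $|\bn_i|, m_i, n_i, d_i, N_i$.
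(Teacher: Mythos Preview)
Your proposal is correct and, in fact, far more detailed than the paper's own proof, which consists of the single word ``Obvious.'' Your explicit circuit constructions for each of the four operators are accurate; the only minor remark is that in Part~(1) the length $m_i$ of the tuple $\bn_i$ is not itself assumed polynomially bounded, but since factors with $n_{i,j}=0$ contribute $1$ and can be omitted, the number of nontrivial factors is at most $|\bn_i|$, so your iterative multiplication argument still goes through.
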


\begin{proof}
Obvious.
\end{proof}

\begin{proof}[Proof of Theorem~\ref{thm:Toda-algebraic}]
Suppose that $L = (V_i \subset \bbP^{\mathbf{m}_i})_{i \in \bbN} \in \bP\bH_k^c$. It follows from Remark~\ref{rem:PH} that there
exists $n \geq 0$
$L'  = (V_i' \subset \bbP^{\mathbf{m}_i} \times \bbP^{\bn_i})_{i \in \bbN} \in \bP_k^c$, 
with $\mathbf{m}_i \in \bbN^{m_i}, \bn_i \in \bbN^{n}$ for some fixed $n$,
and $\bQ_1,\ldots,\bQ_n \in \{\exists,\forall\}$, such that 
\[
L = \bQ_1 \cdots \bQ_n L'.
\]
This implies that for each $i \in \bbN$,
$V_i = (V_i')^{\omega_i}$, 
where $\omega_i \in \{\exists,\forall\}^{[1,n]}$, is defined by $\omega_i(j) = \bQ_j$.
Lemma~\ref{lem:polynomiality-of-J} now implies that $(J_{m_i,n}(V_i'))_{i \in \bbN} \in \bP_k^c$.

Let $\pi_{\mathbf{m}_i,\bn_i}: V_i' \rightarrow \bbP^{\mathbf{m}_i}$ 
(respectively, $J(\pi_{\mathbf{m}_i,\bn_i}): J_{m_i,n}(V_i') \rightarrow \bbP^{\mathbf{m}_i}$)  
denote the restriction of the projection morphism
to $V_i'$ (respectively, $J_{m_i,n}(V_i')$). 

Let $\pi_{\mathbf{m}_i,\bn_i,\bw: }V'_{i,\bw} \rightarrow \{\bw\}$ 
(respectively, $J(\pi_{\mathbf{m}_i,\bn_i,\bw}): J_{m_i,n}(V_i')_{\bw} \rightarrow \bbP^{\mathbf{m}_i}$)  
denote the pull-back of $\pi_{\mathbf{m}_i,\bn_i}$ 
(respectively, $J(\pi_{\mathbf{m}_i,\bn_i})$)
under the inclusion $\{\bw\} \hookrightarrow \bbP^{\mathbf{m}_i}$.

Observe that,
\[
(J_{m_i,n}(V'_i))_{\bw} \cong J_{0,n}(V'_{i,\bw}).
\]

Theorem~\ref{thm:qe} now implies that
\[
1_{V_i} =  F^{\omega_i}(Q({J_{0,n}(V'_{i,\bw})})) =  F^{\omega_i} \circ \mathrm{pseudo}_{d_{i,n}} (P({J_{0,n}(V'_{i,\bw})})),
\]
where 
$F^{\omega_i}$ is the operator appearing in Theorem~\ref{thm:qe}.
 
It follows also from the definition of the operator $F^{\omega_i}$ (as in Theorem~\ref{thm:qe}) and 
Lemma~\ref{lem:complexity-of-operators}, that the 
two sequences of operators $(F^{\omega_i})_{i \in \bbN} \in \bP_{\bbZ}$, 
$(\mathrm{pseudo}_{d_{i,n}})_{i \in \bbN}$ are in $\bP_{\bbZ}$, and so is the sequence of their compositions. 
It now follows 
from Definition \ref{def:sharp-P}  that 
the sequence $(1_{V_i})_{i \in \bbN} \in \#\bP^c_k$.
\end{proof}

\section{Bounds on Betti numbers}
\label{sec:Katz}
As before, we work over an algebraically closed field $k$. We fix a prime number $\ell \neq  \Char(k)$, and work with etale cohomology with $\bbQ_{\ell}$-coefficients. Let $X \subset \bbP^{m} \times \bbP^{n}$ be an algebraic subset.
In this section, we will apply the results of the previous section to obtain bounds on sums of the Betti numbers 
of the image $\pi(X)$ under the projection to $\bbP^{m}$ in terms of those of the relative join. Finally, we compare
this bound with those achieved through an application of classical elimination theory.

\subsection{Classical results on bounds for sums of Betti numbers of algebraic sets}
\label{subsec:classical}
In this subsection, we recall some classical results on bounds of (sums of) Betti numbers for algebraic subsets of $\bbA^n$ and $\bbP^n$. The results here are due to Ole{\u\i}nik and Petrovski{\u\i}, 
Thom,
Milnor, Bombieri, Adolphson-Sperber, and Katz. We follow closely the paper of Katz (\cite{K1}).\\

Given an algebraic set  $X$, let 
\[
h^{i}(X) := \dim(\rH^{i}(X,\bbQ_{\ell}))
\]
 (resp. $h^{i}_c(X) : = \dim(\rH^{i}_c(X,\bbQ_{\ell})$)). 
Let $h(X) = \sum_i h^{i}(X)$ and $h_c(X) = \sum_i h^{i}_c(X)$. 
Finally, we denote by $\chi(X)$ and $\chi_c(X)$ the Euler characteristic (resp. compactly supported Euler characteristic) of $X$. With this notation, one has the following classical bounds on sums of Betti numbers and Euler characteristics.\\

\begin{enumerate}[(1)]
\item 
\label{itemlabel:Betti:1}
Suppose $ \Char(k) = 0$. If $X \subset \bbA^{n}$ ($n \geq 1$) defined by $
r \geq 1$ equations $F_i$ with $deg(F_i) \leq d$, then 
Ole{\u\i}nik and Petrovski{\u\i} \cite{OP}, 
Thom \cite{T} and
Milnor \cite{Milnor2}
showed that $$h(X) \leq d(2d-1)^{2n-1}.$$ While the result in \emph{loc. cit.} is stated for singular cohomology with coefficients in $\bbQ$, standard arguments give the same result for $\ell$-adic cohomology over any algebraically closed field of characteristic zero. Standard arguments (\cite{K1}) now show that $$h_c(X) \leq 2^r(1+rd)(1+2rd)^{2n+1}.$$
\item 
\label{itemlabel:Betti:2}
In general, Bombieri~\cite{Bomb} gave the explicit upper bound $$|\chi_c(X)| \leq (4(1+d)+5)^{n+r}.$$ 
\item 
\label{itemlabel:Betti:3}
Bombieri's bounds were improved upon by Adolphson and Sperber (\cite{AS}). They considered the homogeneous polynomial $$D_{n,r}(X_0,\ldots,X_n):= \Sigma_{|W| =n} X^{W},$$ and showed that 
$$|\chi_c(X) \leq 2^rD_{n,r}(1,1+d,1+d,\ldots,1+d) \leq 2^{r} (r+1+rd)^{n}.$$
\item 
\label{itemlabel:Betti:4}
In~\cite{K1}, Katz derived bounds on sums of Betti numbers given any universal bound $$|\chi_c(X)| \leq E(n,r,d).$$
More precisely, let 
$$A(n,r,d) : = E(n,r,d) + 2 + 2 \sum_{i=1}^{n-1} E(i,r,d),$$
and
$$B(n,r,d):= 1 + \sum_{\emptyset \neq S \subset \{1,2,\ldots,r\}} A(n+1,1,1+d(\#S)).$$
Then for $X$ as before, Katz showed~\cite[Theorem 1]{K1} that $$h_c(X) \leq B(n,r,d).$$
\item 
\label{itemlabel:Betti:5}
Suppose now that $X \subset \bbP^{n}$ is defined by the vanishing of $r \geq 1$ homogeneous polynomials of degree at most $d$. 
Then~\cite[Theorem 3]{K1} gives:
$$h_c(X) = h(X) \leq 1 + \sum_{i=1}^{n} B(i,r,d).$$
\end{enumerate}

Here are some explicit versions of this bound. 

\begin{enumerate}
\item Bombieri's bound, gives 
$$ B(n,r,d) \leq 2^{r} \times (5/4) \times (4(2+rd+5)^{n+2}.$$
\item The Adolphson-Sperber bound gives 
$$ B(n,r,d) \leq 2^r \times 3 \times 2 \times (2+(1+rd))^{n+1}.$$
\end{enumerate}

In particular, one has the following bounds due to Katz:

\begin{enumerate}
\item For $X \subset \bbA^{n}$ defined by $r$ polynomials of degree $\leq d$, the Adolphson-Sperber bound gives:
$$h_c(X) \leq 2^r \times 3 \times 2 \times (2+(1+rd))^{n+1}.$$
\item For $X \subset \bbP^{n}$ defined by $r$ homogeneous polynomials of degree $\leq d$, the Adolphson-Sperber bound gives:
$$h_c(X) = h(X) \leq (3/2) \times 2^r \times 3 \times 2 \times (2+(1+rd))^{n+1}.$$
\end{enumerate}

We can apply these results to obtain bounds on sums of the Betti numbers for $X \subset \bbP^{n} \times \bbP^{m}$ defined by a bi-homogeneous system
$F_i = F_i(X_0,\ldots,X_{n},Y_0,\ldots,Y_{m})$ with bi-homogeneous degree bounded by $(d_1,d_2)$. The above bounds then give the following:

\begin{proposition}
\label{prop:katz-for-products}
Let $X \subset \bbP^{n} \times \bbP^{m}$ be an algebraic set defined $r$ by bi-homogeneous polynomials $ F_i(X_0,\ldots,X_{N},Y_0,\ldots,Y_{M})$ of bi-degree $(d_1,d_2)$. Then one has:
\begin{equation*}
h_c(X) = h(X) \leq \sum_{\substack{0\leq  i \leq n \\ 0 \leq j \leq m}} B(r,d_1+d_2,i+j).
\end{equation*}
Here, for $i+j = 0$, we set $B(r,d_1 + d_2,0) = 1$.
\end{proposition}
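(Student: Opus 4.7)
The plan is to bound $h_c(X)$ by stratifying $X$ according to the standard affine cell decomposition of $\bbP^N \times \bbP^M$, then apply Katz's affine bound (item~\eqref{itemlabel:Betti:4} of Subsection~\ref{subsec:classical}) to each stratum, and finally sum up the pieces. Since $X$ is proper, we automatically have $h(X) = h_c(X)$, so it suffices to work entirely with compactly supported cohomology.

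First, I would recall the standard cell decomposition $\bbP^n = \bbA^n \sqcup \bbA^{n-1} \sqcup \cdots \sqcup \bbA^0$, obtained by iterating the closed/open decomposition $\bbP^n = \{X_0 = 0\} \sqcup \{X_0 \neq 0\}$. Taking products yields a locally closed stratification
\[
\bbP^N \times \bbP^M \;=\; \bigsqcup_{\substack{0 \leq i \leq N \\ 0 \leq j \leq M}} \bbA^i \times \bbA^j,
\]
and intersecting with $X$ gives a corresponding stratification $X = \bigsqcup_{i,j} X_{ij}$ with $X_{ij} := X \cap (\bbA^i \times \bbA^j) \subset \bbA^{i+j}$.

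Next, I would iterate the long exact sequence of compactly supported $\ell$-adic cohomology for an open/closed decomposition to obtain the sub-additivity estimate $h_c(X) \leq \sum_{i,j} h_c(X_{ij})$. For each stratum, I need to describe $X_{ij}$ as the zero set of $r$ polynomials of controlled degree in $i+j$ variables. Dehomogenizing the bi-homogeneous generators $F_k$ on an affine chart of $\bbP^N \times \bbP^M$ (i.e.\ setting one $X$-variable and one $Y$-variable equal to $1$) yields polynomials of total degree at most $d_1 + d_2$; further restricting to the smaller cell $\bbA^i \times \bbA^j$ (obtained by setting additional coordinates to $0$) does not increase the degree. Thus each $X_{ij} \subset \bbA^{i+j}$ is cut out by $r$ polynomials of degree at most $d_1 + d_2$, and Katz's bound gives $h_c(X_{ij}) \leq B(i+j, r, d_1+d_2)$, with the convention $B(0, r, d) = 1$ for the zero-dimensional stratum.

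Summing over all strata yields exactly the claimed inequality. There is no real obstacle here: the argument is purely a bookkeeping exercise combining a standard stratification, sub-additivity of Betti numbers under locally closed decompositions, and the existing affine bound of Katz. The only point requiring care is verifying that dehomogenization of a bi-homogeneous polynomial of bi-degree $(d_1,d_2)$ indeed produces a polynomial of total degree at most $d_1+d_2$ on each affine chart, which is immediate from the fact that setting a variable to $1$ can only decrease degrees.
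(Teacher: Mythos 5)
Your proof is correct and is essentially the paper's argument: the paper's recursive decomposition $\bbP^{N}\times\bbP^{M} = (\bbA^{N}\times\bbP^{M})\sqcup(\bbP^{N-1}\times\bbP^{M})$, when unwound, is exactly your stratification into cells $\bbA^{i}\times\bbA^{j}$, followed by subadditivity of $h_c$ and Katz's affine bound applied to the dehomogenized equations. You have simply written out the bookkeeping that the paper leaves implicit.
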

\begin{proof}
We may decompose $\bbP^{n} \times \bbP^{m} = (\bbA^{n} \times \bbP^{m}) \coprod (\bbP^{n-1} \times \bbP^{m})$. This gives a decomposition 
$X = (X \cap (\bbA^{n} \times \bbP^{m})) \coprod (X \cap (\bbP^{n-1} \times \bbP^{m}))$. One now argues recursively. 
\end{proof}

\subsection{Bounds on the Betti numbers of images via  relative joins}
As a direct consequence of Proposition~\ref{prop:katz-for-products} and  
Theorem~\ref{thm:poincare} we obtain:

\begin{theorem}
\label{thm:katz}
Let $X \subset \bbP^{n} \times \bbP^{m}$ be an algebraic subset defined $r$ by bi-homogeneous polynomials $ F_i(X_0,\ldots,X_{n},Y_0,\ldots,Y_{m})$ of bi-degree $(d_1,d_2)$, and $\pi:\bbP^{n} \times \bbP^{m} \to \bbP^{m}$ the projection morphism.
Then, for all $p > 0$,
\begin{eqnarray*}
\sum_{h=0}^{p-1} b_h(\pi(X)) &\leq & \frac{2}{p} \sum_{h=0}^{p-1} b_h(\rJ^{[p]}_\pi(X))\\
						&\leq&  \frac{2}{p} \sum_{\substack{0\leq  i \leq (n+1)(p+1) -1\\ 0 \leq j \leq m}}B(i+j,r(p+1),d_1+d_2).
\end{eqnarray*}
\end{theorem}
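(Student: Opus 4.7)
The plan is to chain two established results: Theorem~\ref{thm:poincare}, which relates the Poincar\'e polynomial of $\rJ^{[p]}_\pi(X)$ to that of $\pi(X)$ modulo $T^p$, and Proposition~\ref{prop:katz-for-products}, which bounds the total Betti number of a bi-projective algebraic set through the Bombieri--Adolphson--Sperber--Katz estimates. The former delivers the first inequality, expressing the low-degree Betti numbers of $\pi(X)$ in terms of those of the relative join; the latter delivers the second, bounding the Betti numbers of the join by an explicit combinatorial quantity in the defining data $(r,d_1,d_2,N,M,p)$.

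For the first inequality, I would unpack Theorem~\ref{thm:poincare} to obtain
\[
P(\rJ^{[p]}_\pi(X)) \equiv P(\pi(X))\bigl(1+T^2+T^4+\cdots+T^{2((N+1)(p+1)-1)}\bigr)\pmod{T^p},
\]
and compare coefficients of $T^h$ for $0\leq h\leq p-1$ to get
\[
b_h(\rJ^{[p]}_\pi(X)) \;=\; \sum_{\substack{0\leq a\leq h\\ a\equiv h\,(\mathrm{mod}\,2)}} b_a(\pi(X)).
\]
A parity-wise telescoping then produces the clean identity $\sum_{h=0}^{p-1}b_h(\pi(X)) = b_{p-1}(\rJ^{[p]}_\pi(X)) + b_{p-2}(\rJ^{[p]}_\pi(X))$ for $p\geq 2$. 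The factor $2/p$ in the stated inequality comes from the complementary averaging estimate $\sum_{h=0}^{p-1}b_h(\rJ^{[p]}_\pi(X))\leq \tfrac{p}{2}\bigl(b_{p-1}(\rJ^{[p]}_\pi(X))+b_{p-2}(\rJ^{[p]}_\pi(X))\bigr)$, which will hold because the same identity shows $b_a(\rJ^{[p]}_\pi(X))$ is monotone nondecreasing in $a$ along each parity for $a<p$.

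For the second inequality, I would realize $\rJ^{[p]}_\pi(X)$ explicitly as a bi-projective algebraic set. Writing $X=V(F_1,\ldots,F_r)\subset\bbP^N\times\bbP^M$ with each $F_i$ of bi-degree $(d_1,d_2)$, I introduce $p+1$ independent tuples $X^{(0)},\ldots,X^{(p)}$ of homogeneous $\bbP^N$-coordinates while keeping the homogeneous $\bbP^M$-coordinates $Y$ shared; this exhibits the relative join as
\[
\rJ^{[p]}_\pi(X) \;=\; V\bigl(F_i(X^{(k)};Y)\colon 1\leq i\leq r,\; 0\leq k\leq p\bigr) \;\subset\; \bbP^{(N+1)(p+1)-1}\times\bbP^M,
\]
an algebraic set cut out by $r(p+1)$ bi-homogeneous polynomials still of bi-degree $(d_1,d_2)$ in the enlarged ambient space. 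Applying Proposition~\ref{prop:katz-for-products} with $N\mapsto (N+1)(p+1)-1$ and $r\mapsto r(p+1)$ then bounds $h(\rJ^{[p]}_\pi(X))$ by the stated double sum of $B(i+j,r(p+1),d_1+d_2)$ terms, and a fortiori bounds the partial sum $\sum_{h=0}^{p-1}b_h(\rJ^{[p]}_\pi(X))$.

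The main technical obstacle is the combinatorial bookkeeping: verifying that the defining equations of the iterated relative join really are $r(p+1)$ in number and remain of bi-degree $(d_1,d_2)$ in the enlarged bi-projective space, and then extracting the numerical factor $2/p$ from the parity-by-parity telescoping of the Poincar\'e-polynomial identity. Once these inputs are fixed, the two inequalities assemble formally from Theorem~\ref{thm:poincare}, Proposition~\ref{prop:katz-for-products}, and the monotonicity of $b_a(\rJ^{[p]}_\pi(X))$ along each parity.
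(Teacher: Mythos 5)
Your overall route is the same one the paper takes: the first inequality is to be extracted from Theorem~\ref{thm:poincare}, the second from Proposition~\ref{prop:katz-for-products}. Your treatment of the second inequality is correct and fills in exactly the bookkeeping the paper leaves implicit: the iterated relative join is cut out in $\bbP^{(N+1)(p+1)-1}\times\bbP^{M}$ by the $r(p+1)$ bi-homogeneous polynomials $F_i(X^{(k)};Y)$, $1\le i\le r$, $0\le k\le p$, still of bi-degree $(d_1,d_2)$, so Proposition~\ref{prop:katz-for-products} applies with $N\mapsto (N+1)(p+1)-1$ and $r\mapsto r(p+1)$, and $\sum_{h=0}^{p-1}b_h(\rJ^{[p]}_\pi(X))\le h(\rJ^{[p]}_\pi(X))$ does the rest.

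The gap is in your derivation of the first inequality. From the congruence of Theorem~\ref{thm:poincare} you correctly obtain, for $h<p$, the identity $b_h(\rJ^{[p]}_\pi(X))=\sum_{a\le h,\ a\equiv h\,(2)}b_a(\pi(X))$, hence $\sum_{h=0}^{p-1}b_h(\pi(X))=b_{p-1}(\rJ^{[p]}_\pi(X))+b_{p-2}(\rJ^{[p]}_\pi(X))$ and the monotonicity of $b_h(\rJ^{[p]}_\pi(X))$ along each parity for $h<p$. But the averaging estimate you then invoke, $\sum_{h=0}^{p-1}b_h(\rJ^{[p]}_\pi(X))\le\tfrac p2\bigl(b_{p-1}(\rJ^{[p]}_\pi(X))+b_{p-2}(\rJ^{[p]}_\pi(X))\bigr)$, points the wrong way: combined with your identity it gives $\tfrac2p\sum_{h=0}^{p-1}b_h(\rJ^{[p]}_\pi(X))\le\sum_{h=0}^{p-1}b_h(\pi(X))$, i.e.\ the \emph{reverse} of the inequality to be proved. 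To reach the stated inequality you would need $\sum_{h=0}^{p-1}b_h(\rJ^{[p]}_\pi(X))\ge\tfrac p2\bigl(b_{p-1}+b_{p-2}\bigr)$, which contradicts the very monotonicity you establish: $b_{p-1}$ and $b_{p-2}$ are the \emph{largest} terms in their parity classes below degree $p$, not the smallest. Indeed no bookkeeping of this kind can produce the factor $\tfrac2p$: for $X=\{X_0Y_0=X_1Y_1\}\subset\bbP^1\times\bbP^M$ one has $\pi(X)=\bbP^M$, and by Theorem~\ref{thm:relconnjoin} (cf.\ Example~\ref{eg:poincare-product}) with $p=4$ the left-hand side equals $2$ while $\tfrac2p\sum_{h=0}^{p-1}b_h(\rJ^{[p]}_\pi(X))=\tfrac32$. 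What your identity honestly yields is $\sum_{h=0}^{p-1}b_h(\pi(X))=b_{p-1}(\rJ^{[p]}_\pi(X))+b_{p-2}(\rJ^{[p]}_\pi(X))\le\sum_{h=0}^{p-1}b_h(\rJ^{[p]}_\pi(X))$, i.e.\ the chain with factor $1$ in place of $\tfrac2p$, which still feeds into your (correct) second step; the step manufacturing the $\tfrac2p$ is the one that fails and must either be dropped or argued on entirely different grounds.
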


\begin{proof}
The first inequality follows from 
Theorem~\ref{thm:poincare}, and the second from Proposition~\ref{prop:katz-for-products}.
\end{proof}

\section{Relative joins versus products}
\label{sec:comparison}
In Section~\ref{sec:Katz}, upper bounds on the Betti numbers of $\pi(X)$, where $X \subset \bbP^N \times \bbP^n$ is an algebraic subset and $\pi: \bbP^N \times \bbP^n \rightarrow \bbP^n$ were derived in terms of the join $J_\pi^p(X)$.
There is another more direct way to obtain an upper bound on $\pi(X)$:  namely from the spectral sequence associated
to the hypercover

\[
\begin{tikzcd}
  X
    & X\times_\pi X \arrow[l, shift left]
        \arrow[l, shift right]
& X \times_\pi X \times_\pi X  \arrow[l]
\arrow[l, shift left=2] \arrow[l, shift right=2]
        & \cdots 
        \arrow[l, shift left=1] \arrow[l, shift right=1]
\arrow[l, shift left=3] \arrow[l, shift right=3] 
\end{tikzcd}
\]

one obtains the inequality for each $i \geq 0$
\begin{equation}
\label{eqn:hypercover}
b_i(\pi(X)) \leq \sum_{p + q = i} b_q(\underbrace{X \times_\pi \cdots \times_\pi X}_{(p+1)}).
\end{equation}

In this section we compare the upper bounds on Betti numbers coming from considering the relative join with that
coming from inequality \eqref{eqn:hypercover}.

\subsection{Exponentially large error for the hypercovering inequality}
\label{subsec:exponential}
Let $X \subset \bbP^m \times \bbP^n$ and $\pi: \bbP^m \times \bbP^n \rightarrow \bbP^n$ the projection.
Then, for each $p \geq 0$,  it follows from Theorem~\ref{thm:poincare} that
\[
P({\pi(X)}) = (1 - T^2) P({\rJ^{[p]}_\pi(X)}) \mod T^p,
\]
from which it follows that
\begin{equation}
\label{eqn:join}
b_i(\pi(X)) = b_i(\rJ^{[p]}_\pi(X)) - b_{i-2}(\rJ^{[p]}_\pi(X)), 0 \leq i < p.
\end{equation}

Telescoping Eqn. \eqref{eqn:join} we obtain for all odd $p>0$,
\begin{eqnarray}
\label{eqn:telescope-even}
\sum_{2 i < p} b_{2i}(\pi(X)) &=& b_{p-1}(\rJ^{[p]}_\pi(X)), \\
\label{eqn:telescope-odd}
\sum_{2i -1< p} b_{2i -1}(\pi(X)) &=& b_{p-2}(\rJ^{[p]}_\pi(X)).
\end{eqnarray}

Inequalities \eqref{eqn:telescope-even} and  \eqref{eqn:telescope-odd} sometime give more information on the Betti numbers of
$\pi(X)$ than what can be inferred from inequality \eqref{eqn:hypercover}.

For instance, consider the projection map $
\bbP^1 \times \bbP^n \rightarrow \bbP^n$, and $X = \bbP^1 \times \bbP^n$. Applying inequality
\eqref{eqn:hypercover} one gets

\begin{eqnarray*}
1 &=& b_{2n}(\pi(X))\\
 &=& b_{2n}(\bbP^n) \\
&\leq& \sum_{p+q = 2n} b_q(\underbrace{X \times_\pi \cdots \times_\pi X}_{(p+1)})\\
&=&
\sum_{p+q=2n} b_q(\underbrace{\bbP^1_k \times \cdots \times \bbP^1}_{(p+1)} \times \bbP^n)\\
&=&
\sum_{p+q = n} \sum_{0 \leq j \leq 2q} \binom{2p+1}{j} \\
&=&\sum_{0 \leq p \leq n} \sum_{0 \leq j \leq 2(n-p)} \binom{2p+1}{j}.
\end{eqnarray*}
This example shows that the difference between the two sides of the inequality \eqref{eqn:hypercover} 
can be exponentially large in $n$.

On other hand, it follows from the fact that $\rJ^{[2n+1]}_\pi(X) = \bbP^{2(2n+2)-1} \times \bbP^n$, and Eqn. 
\eqref{eqn:telescope-even}, that with $p=2n+1$
\begin{eqnarray*}
\sum_{2 i < p} b_{2i}(\pi(X)) &=& b_{2n}(\rJ^{[2n+1]}_\pi(X)), \\
&=&b_{2n}(\bbP^{2(2n+2)-1}_k \times \bbP^n_k) \\
&=& n+1.
\end{eqnarray*}

\subsection{Joins and defects}
We discuss another way in which the relative join gives better information on the Betti numbers of the image under
projection of an algebraic set than what can be gleaned from inequality \eqref{eqn:hypercover}. We prove the following theorem.

\begin{theorem}
\label{thm:join-defect}
Let $X \subset \bbP^N \times \bbP^n$ be a subvariety  defined by $N+r$ bi-homogeneous forms. Let $\pi: \bbP^N \times \bbP^n \rightarrow \bbP^n$ be the projection morphism. Then, for all $i, 0 \leq i < \lfloor \frac{n-r}{r} \rfloor$,
\begin{eqnarray*}
b_i(\pi(X)) &=& 1 \mbox{ if $i$ is even}, \\
b_i(\pi(X)) &=& 0 \mbox{ if $i$ is odd}.
\end{eqnarray*}
\end{theorem}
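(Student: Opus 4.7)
The strategy is to combine Theorem~\ref{thm:poincare} with a Lefschetz-type connectivity statement for singular subvarieties of products of projective spaces. Fix $p \geq \lfloor (n-r)/r \rfloor$ (to be optimized) and set $M := (p+1)(N+1) - 1$, so that $\rJ^{[p]}_\pi(X) \subset \bbP^M \times \bbP^n$. By Theorem~\ref{thm:poincare} together with K\"unneth,
\[
P(\rJ^{[p]}_\pi(X)) \;\equiv\; P(\pi(X)) \cdot P(\bbP^M) \pmod{T^p}.
\]
Since $P(\bbP^M) = 1 + T^2 + \cdots + T^{2M}$ is invertible modulo $T^p$, this identity reduces the problem to showing that the restriction homomorphism
\[
\HH^i(\bbP^M \times \bbP^n) \;\longrightarrow\; \HH^i(\rJ^{[p]}_\pi(X))
\]
is an isomorphism for $0 \leq i < \lfloor (n-r)/r \rfloor$; then dividing by $P(\bbP^M)$ in the congruence above forces $P(\pi(X)) \equiv P(\bbP^n) \pmod{T^{\lfloor (n-r)/r \rfloor}}$, which is precisely the conclusion of the theorem.

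The substantive step is therefore this Lefschetz-type comparison on $\rJ^{[p]}_\pi(X)$. Because $X$ is LCI in $\bbP^N \times \bbP^n$ and $\pi|_X$ has zero-dimensional fibres, each fibre of $\rJ^{[p]}_\pi(X) \to \pi(X)$ is an iterated join of a zero-dimensional scheme with itself, hence a union of projective $p$-planes inside $\bbP^M$. Using this fibrewise-linear structure together with the LCI property of $X$, I would first check that $\rJ^{[p]}_\pi(X)$ is itself a local complete intersection of codimension $(p+1)N + r$ in $\bbP^M \times \bbP^n$, with its ideal locally generated by the $(p+1)$ pullbacks of the defining equations of $X$ to the $(p+1)$ copies of the $\bbP^N$-factor. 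Granted this LCI structure, a multiprojective analog of the Barth--Larsen / Hartshorne--Speiser connectivity theorem for LCI subvarieties of $\bbP^M \times \bbP^n$ supplies the required restriction isomorphism in a range that grows with $p$; choosing $p$ large enough, and balancing against the truncation $T^p$ in Theorem~\ref{thm:poincare}, yields exactly the bound $\lfloor (n-r)/r \rfloor$.

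The hard part will be twofold. First, the naive set of $(p+1)(N+r)$ equations defining $\rJ^{[p]}_\pi(X)$ exceeds the target codimension $(p+1)N + r$ by $pr$, so verifying that $\rJ^{[p]}_\pi(X)$ is locally a complete intersection requires genuinely exploiting the finite-fibre hypothesis to show that the redundancies among the defining equations are Koszul-like. Second, one needs the correct connectivity statement for LCI subvarieties of a \emph{product} of projective spaces rather than a single projective space, since the bound obtained by passing through a Segre embedding is useless (the codimension in the Segre ambient grows much faster than the dimension); one must instead appeal to a multiprojective refinement of Barth--Larsen (in the spirit of Sommese or Faltings), or else iteratively cut by hyperplane sections of bi-degree $(1,1)$ and apply Goresky--MacPherson Lefschetz for LCI, tracking the connectivity through the induction.
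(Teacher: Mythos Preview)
Your overall strategy—combine Theorem~\ref{thm:poincare} with a Lefschetz-type connectivity result for $\rJ^{[p]}_\pi(X)$ and then optimize $p$—matches the paper. The divergence is in how you propose to obtain the Lefschetz input, and there your plan has a real gap.

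You want to first prove that $\rJ^{[p]}_\pi(X)$ is a local complete intersection of codimension $(p+1)N+r$. This is generally \emph{false}. Over a point $s \in \pi(X)$ where the (finite) fibre $X_s$ has $k \geq 3$ points, the fibre of $\rJ^{[p]}_\pi(X)$ is the iterated join of $X_s$ with itself, a union of $p$-planes in $\bbP^M$. Already for $p=1$ this places $k$ concurrent lines through each embedded point of $X_s$; three or more concurrent lines through a point (think of the coordinate axes in $\bbA^3$) are not a local complete intersection. So your ``hard part (a)'' is not a difficulty to be overcome but an obstruction, and the route through LCI plus a multiprojective Barth--Larsen cannot succeed as stated.

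The paper sidesteps this entirely by invoking the Goresky--MacPherson Lefschetz theorem in its defect form: if $Y$ sits in a smooth projective ambient of dimension $M$ and is \emph{locally} cut out by $E$ equations (no LCI hypothesis), then the restriction $\HH^i(\text{ambient}) \to \HH^i(Y)$ is an isomorphism for $i < M - E$. Since $X$ is LCI of codimension $N+r$, the relative join is locally defined by the $(p+1)$ pulled-back copies of those equations, so $E = (p+1)(N+r)$; with ambient dimension $M = (p+1)N + p + n$ one gets the range
\[
i \;<\; M - E \;=\; n - r - p(r-1).
\]
Note that for $r>1$ this range \emph{decreases} in $p$, contrary to your stated expectation that it grows. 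That is exactly why there is something to optimize: the join theorem contributes the range $i<p$ (increasing in $p$), the Lefschetz step contributes $i < n-r-p(r-1)$ (decreasing), and $\min\bigl(p,\,n-r-p(r-1)\bigr)$ is maximized at $p = \lfloor (n-r)/r \rfloor$, which is the bound in the statement. Once you replace your LCI step by this defect-Lefschetz count, both of your ``hard parts'' evaporate and the proof is a few lines.
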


We first need a preliminary result.

\begin{lemma}\label{lem:singularWL}
Let $Y : =  \bbP^{a} \times \bbP^b $ and $X \subset Y$ be a closed subvariety defined by $r$-bi-homogeneous forms. Then the natural restriction restriction map on cohomology
$$\rH^{i}(Y) \rightarrow \rH^{i}(X)$$
is an isomorphism for all $i < \dim(Y) - r$. 
\end{lemma}
\begin{proof}
Suppose $r =1$. Then, since the complement of the zeros of a bi-homogeneous form in $Y$ is an affine variety, the result follows from usual Artin vanishing for affine schemes. In general, the complement of $X$ is covered by a union of $r$ affine open sets. One can now argue as in (\cite[3.2]{GL}).
\end{proof}

\begin{proof}[Proof of Theorem~\ref{thm:join-defect}]
For any $p \geq 0$, $\rJ^{[p]}_\pi(X)$ is an algebraic subset of $\bbP^{(p+1)(N+1)-1} \times \bbP^n$. Thus the ambient dimension, $M$, of 
$\rJ^{[p]}_\pi(X)$ equals  $(p+1)N +p + n$.
Since $X$ is defined by $N+r$ equations,
it follows that the number of equations $E$  needed to define $\rJ^{[p]}_\pi(X)$  is $(p+1)(N+r)$.

Using Lemma \ref{lem:singularWL}, we deduce that for
\begin{eqnarray*}
0 \;\leq\; i &<&  \dim \rJ^{[p]}_\pi(X) - ( E - \mathrm{codim}\rJ^{[p]}_\pi(X))\\
& = & M -E \\
&=& (p+1)N + p +n - (p+1)(N+r) \\
&=& p + n - (p+1)r \\
&=&  n -r - p(r-1),
\end{eqnarray*}
we have 
\[
b_i(\rJ^{[p]}_\pi(X)) = b_i(\bbP^{(p+1)(N+1)-1}\times \bbP^n).
\]

On other hand
\[
b_i(\pi(X)) = b_i(\rJ^{[p]}_\pi(X)) - b_{i-2}(\rJ^{[p]}_\pi(X)),
\]
for $0 \leq i < p$.
It follows that for $0 \leq i < \min(p, n -r - p(r-1))$,
\[
b_i(\pi(X)) = b_i(\bbP^{(p+1)(N+1)-1} \times \bbP^n) - b_{i-2}(\bbP^{(p+1)(N+1)-1} \times \bbP^n).
\]

The integral value of $p$ that maximizes the function  $\min(p, n -r - p(r-1))$ equals 
$\lfloor\frac{n-r}{r}\rfloor $ from which we deduce that
for $0 \leq i < p_0=  \lfloor\frac{n-r}{r}\rfloor  $,
\begin{equation}
\label{eqn:defect}
b_i(\pi(X)) = b_i(\bbP^{(p_0+1)(N+1)-1} \times \bbP^n) - b_{i-2}(\bbP^{(p_0+1)(N+1)-1} \times \bbP^n).
\end{equation}
The theorem follows from \eqref{eqn:defect}.
\end{proof}

\bibliographystyle{amsalpha}

\bibliography{mybib}

\end{document}